\newcommand{\tp}{.}
\newcommand{\tc}{,}
\newcommand{\bm}{\mathbf}
\theoremstyle{definition}
\newtheorem{theorem}{Theorem}[section]
\newtheorem{definition}[theorem]{Definition}
\newtheorem{corollary}[theorem]{Corollary}
\newtheorem{prop}[theorem]{Proposition}
\newtheorem{lemma}[theorem]{Lemma}
\newtheorem{example}[theorem]{Example}
\newtheorem{itheorem}{Theorem}[section]
\newtheorem{icorollary}[itheorem]{Corollary}
\theoremstyle{remark}
\newtheorem{remark}[theorem]{Remark}
\renewcommand{\emph}{\textsl}
\renewcommand{\textit}{\textsl}
\renewcommand{\inf}{\mathop{\mathrm{inf}\vphantom{\mathrm{sup}}}}
\newcommand{\Z}{{\mathbb Z}}
\newcommand{\R}{{\mathbb R}}
\newcommand{\N}{{\mathbb N}}
\newcommand{\mc}{\mathcal}
\newcommand{\dd}{\,\mathrm{d}}
\newcommand{\trans}{T}
\newcommand{\desub}{D}
\newcommand{\hg}{h^{\operatorname{g}}}
\newcommand{\Ent}{\operatorname{H}}
\newcommand{\Length}{L}
\numberwithin{equation}{section}
\subjclass[2020]{37A35, 37A50, 37B10, 37B40, 52C23}
\keywords{Random substitution; measure of maximal entropy; intrinsic ergodicity.}
\begin{document}

\title{A classification of intrinsic ergodicity for recognisable random substitution systems}

\author{P.\,Gohlke$^{\,1}$ and A.~Mitchell$^{\,2}$}

\maketitle

{\centering{\footnotesize
{$^{\,1}$ Faculty of Mathematics and Computer Science, Friedrich Schiller University, 07743 Jena, Germany\\
$^{\,2}$ Department of Mathematical Sciences, Loughborough University, Loughborough, LE11 3TU, UK\\}
}}

\begin{abstract}
We study a class of dynamical systems generated by random substitutions, which contains both intrinsically ergodic systems and instances with several measures of maximal entropy. In this class, we show that the measures of maximal entropy are classified by invariance under an appropriate symmetry relation. All measures of maximal entropy are fully supported and they are generally not Gibbs measures. We prove that there is a unique measure of maximal entropy if and only if an associated Markov chain is ergodic in inverse time. This Markov chain has finitely many states and all transition matrices are explicitly computable. Thereby, we obtain several sufficient conditions for intrinsic ergodicity that are easy to verify. A practical way to compute the topological entropy in terms of inflation words is extended from previous work to a more general geometric setting.
\end{abstract}

\section{Introduction}

A \emph{substitution} is a symbolic rule that replaces each letter from a finite alphabet with a word consisting of letters from the same alphabet.
Dynamical systems arising from substitutions provide the prototypical examples of mathematical quasicrystals.
Such systems are well-known to have zero topological entropy and, under a mild condition, support a unique invariant measure.
For a detailed introduction to the statistical properties of substitution dynamical systems, we refer the reader to \cite{baake-grimm,queffelec}.

\emph{Random substitutions} are a generalisation of substitutions where the substituted image of a letter is determined by a Markov process.
In contrast to their deterministic counterparts, their associated dynamical systems typically have positive topological entropy \cite{mitchell} and support uncountably many ergodic measures \cite{gohlke-spindeler}.
Random substitution systems thus provide theoretical models for quasicrystals with \emph{local defects}, which simultaneously exhibit long-range order alongside local disorder.
They also provide a systematic framework to study properties of statistically self-similar structures as introduced by Mandelbrot \cite{mandelbrot,mandelbrot2}. 
In particular, Peyri\`{e}re studied the convergence of pattern frequencies \cite{peyriere}, Godr\`{e}che and Luck observed mixed spectral types in the diffraction image \cite{godreche-luck}, and Dekking, Grimmett and Meester classified several phases in random Cantor sets, including the occurrence of fractal percolation \cite{dekking-grimmett,dekking-meester,dekking-wal}.

%Moreover, they have also found application in modelling fractal percolation \cite{dekking-grimmett,dekking-meester,dekking-wal}.
%Properties of these physical phenomena can be associated with properties of an ergodic measure that arises naturally from the random substitution \cite{peyriere}, called the \emph{frequency measure} \todo{Peyriere did not work explicitly with measures on the shift space - maybe better to cite him for something different. Maybe also cite Mandelbrot.}

Variational principles play a pivotal role in finding physically meaningful distributions on a given system. In thermodynamic formalism these distributions are given by equilibrium measures, which often satisfy certain regularity properties \cite{Bowen,Ruelle}. In the absence of an external potential, these equilibrium states coincide with the \emph{measures of maximal entropy} (MMEs); compare also \cite{GS,Jaynes} for the role of entropy maximisation. Within some classes of dynamical systems, it has been observed that both uniqueness of the MME and the presence of several MMEs are possible, depending on the parameters, thus leading to the occurrence of a phase transition \cite{BurtonSteif,Haydn}.

The question of whether a dynamical system has a unique measure of maximal entropy has been well studied over the last decades for various types of dynamical systems \cite{Bufetov_Gurevich, BFSV, Gelfert_Ruggiero, Hofbauer, Markley_Paul, Ures}. Still, there is no complete characterisation to date, even for symbolic dynamical systems. 
Many important classes such as topologically transitive subshifts of finite type have been shown to be \emph{intrinsically ergodic}, that is, there exists a unique measure of maximal entropy \cite{parry_64,Weiss_1,Weiss_2}. 
A common technique for proving a given subshift is intrinsically ergodic is to verify the \emph{specification property} \cite{Bowen74}.
However, there exist many examples of intrinsically ergodic subshifts that do not have specification, some of which are covered by appropriate generalisations of the specification property \cite{Climenhaga_Pavlov, THOMPSON_3, THOMPSON_2, MT-entropy}. Conversely, some weaker versions of specification have been shown to be compatible with several measures of maximal entropy \cite{KOR, Pavlov_counter}. 
A classical example of a non-intrinsically ergodic subshift is the Dyck shift, studied by Krieger \cite{krieger}, which has two fully supported ergodic measures of maximal entropy which are Bernoulli. On the other hand, Haydn produced examples with several measures of maximal entropy that have disjoint topological support \cite{Haydn}. In fact, there are subshifts with uncountably many measures of maximal entropy \cite{Buzzi}.
Progress on the classification of intrinsic ergodicity has also been made recently in the context of coded systems \cite{Pavlov_coded}, suspensions over shifts of finite type \cite{Iommi_Velozo, Kucherenko_Thompson}, and bounded density shifts \cite{GPR}. We also refer to \cite{Climenhaga_Thompson_notes,Kucherenko_Thompson} for more on the history of this problem.

In this work, we classify intrinsic ergodicity for \emph{primitive random substitution systems} under appropriate regularity assumptions.
We show that for this class, the problem of intrinsic ergodicity is non-trivial.
That is, there exist both intrinsically ergodic and non-intrinsically ergodic examples. 
All the measures of maximal entropy have full topological support, but are in general not Bernoulli. In fact, it was shown in previous work that they generally violate a (weak) Gibbs property for the zero potential, and in particular that the corresponding subshift does not satisfy specification \cite{MT-entropy}.
Primitive random substitutions produce systems with complex dynamical properties, including mixed spectral types \cite{baake-spindeler-strungaru, moll}, positive entropy \cite{gohlke,MT-entropy, mitchell}, a hierarchical structure \cite{baake-spindeler-strungaru}, rich automorphism groups \cite{fokkink-rust}, non-trivial dimension spectra \cite{Mitchell_Rutar}, and subtle mixing properties \cite{RMMT,Rust_Miro_Sadun}. With the results presented in this work we therefore contribute to the study of intrinsic ergodicity in a regime of intricate dynamical behaviour. 

A random substitution is given by a set-valued function $\vartheta$ that maps letters from an alphabet $\mc A$ to sets of words in this alphabet. As an example, consider $\mc A= \{ a,b\}$ and $\vartheta \colon a \mapsto \{ aba\}, \, b\mapsto \{baa,bba \}$. It is extended to words by concatenating all possible realisations on the individual letters. For instance, in the given example, $\vartheta(ab) = \{ababaa, ababba \}$. A subshift $X_\vartheta \subset \mc A^\Z$ is assigned in the standard way, by imposing that every pattern in $x \in X_\vartheta$ can be generated from an iteration of $\vartheta$ on some letter. The standard assumption of \emph{primitivity} ensures that $X_\vartheta$ is topologically transitive under the shift map.

The class of primitive random substitutions is very large and encompasses subshifts with contrasting dynamical behaviour, including all topologically transitive shifts of finite type \cite{gohlke-rust-spindeler} and deterministic primitive substitution subshifts, as well as the Dyck shift and similar examples of coded shifts \cite{gohlke-spindeler}. It is therefore customary to either study isolated examples or to impose further assumptions on the class of random substitutions under consideration. We work with an assumption called \emph{geometrical compatibility} that generalises two common assumptions in previous work, \emph{constant length} and \emph{compatibility}. This is also the minimal restriction to ensure that $\vartheta$ allows for a geometric interpretation as a \emph{random inflation rule}. Such a geometric setting seems natural as it readily generalises to shapes in higher dimensions~\cite{godreche-luck}. This geometric framework is adequately represented by a \emph{suspension} $Y_\vartheta$ of the subshift $X_\vartheta$. In the special cases of compatible or constant length random substitutions, intrinsic ergodicity of $X_\vartheta$ and $Y_\vartheta$ are equivalent.

An assumption on $\vartheta$ that puts us outside the scope of many of the classical examples for intrinsic ergodicity is \emph{recognisability}. In fact it was shown in \cite{fokkink-rust} that the corresponding subshifts have non-residually finite automorphism groups and therefore exclude, for instance, all mixing subshifts of finite type.
Recognisability means that every $x \in X_\vartheta$ can be decomposed uniquely into \emph{inflation words} in $\cup_{a \in \mc A} \vartheta^n(a)$ for all $n \in \N$. While this property is automatic for primitive substitutions with infinite subshifts \cite{Mosse}, it has to be imposed as an extra condition for their random analogues. 
Recognisability allows us to identify inflation words in $x \in X_\vartheta$, and locally swapping words in $\vartheta^n(a)$ for some fixed $a \in \mc A$ and $n \in \N$ gives another sequence $y \in X_\vartheta$. These symmetry transformations form the so-called \emph{shuffle group}, which is responsible for the automorphism group being non-residually finite \cite{fokkink-rust}. We call a measure that is invariant under the shuffle group a \emph{uniformity measure}. In fact, measures of maximal entropy are known to respect any symmetry of exchangeable words, up to a factor reflecting a potential change of length \cite{Garcia-Ramos_Pavlov, Meyerovitch}. Our first main result is that, assuming geometric compatibility and recognisability, invariance under the shuffle group entirely characterises the measures of maximal entropy. That is, the measures of maximal entropy on $Y_\vartheta$ are \emph{precisely} the uniformity measures. Since uniformity measures have full topological support, the same holds for the measures of maximal entropy.

Our second main result gives a characterisation of the uniqueness of uniformity measures, and hence of the intrinsic ergodicity of $Y_\vartheta$. We harvest the fact that equidistributing the inflation words in $\vartheta^n(a)$ for \emph{all} levels $n$ and $a \in \mc A$ imposes some rigidity on the uniformity measures in the form of self-consistency relations. These are encoded in a sequence of Markov matrices $Q_\vartheta = (Q_n)_{n \in \N}$, whose entries can be written explicitly in terms of $\# \vartheta^n(a)$ and the combinatorial data of $\vartheta$. We prove that there is a unique uniformity measure if and only if the Markov process $Q_\vartheta$ is ergodic in inverse time. This can be checked via standard tools in probability theory \cite{Chatterjee_Seneta}, see also \Cref{SUBSEC:Inverse-time-Markov}. We provide several sufficient conditions and give an explicit example that violates intrinsic ergodicity of both $X_\vartheta$ and $Y_\vartheta$. 
In particular, this covers and extends all the results on intrinsic ergodicity in \cite{MT-entropy}.

On a technical level, we obtain that uniformity measures have an inverse limit structure under transfer operators that represent the action of $\vartheta$, equipped with appropriate probability vectors on the inflation words. The understanding of such transfer operators is of independent interest, and we expect it to be useful for a more general study of random substitution systems.
As an intermediate step to prove that uniformity measures maximise the entropy on $Y_\vartheta$, we also show that this entropy can be obtained from the growth rate of $\# \vartheta^n(a)$ for all $a \in \mc A$, sometimes referred to as the \emph{inflation word entropy}. This unifies and generalises results from \cite{gohlke,mitchell} in a geometric setting. In fact, the equality of topological entropy and inflation word entropy holds without the assumption of recognisability.

\subsection*{Outline}

The paper is structured as follows. In \Cref{SEC:Preliminaries}, we introduce random substitutions, associated probability structures and their geometric interpretation, and we recall some background on suspension flows, induced systems, inverse-time Markov chains and conditional entropy. This provides us with all the necessary notation to properly formulate our main results in \Cref{SEC:main-results}. The equality of the topological entropy of $Y_\vartheta$ and inflation word entropy is presented in \Cref{SEC:Top-entropy}, alongside some examples that illustrate the need to change from $X_\vartheta$ to $Y_\vartheta$ for this result to hold.
We start restricting our attention to recognisable random substitutions in \Cref{SEC:structure-of-recognisable-subshifts}, where we show a structural result for the associated subshift.
\Cref{SEC:measure-transformation} is dedicated to the introduction and study of transfer operators on $X_\vartheta$ that reflect the action of $\vartheta$. This enables us to introduce the class of \emph{inverse limit measures} in \Cref{SEC:inverse-limit-measures}, generalising the class of \emph{frequency measures} studied in previous work, and to characterise their uniqueness. Interpreting uniformity measures as particular instances of limiting measures, we characterise intrinsic ergodicity in \Cref{SEC:uniformity_intrinsic_ergodicity}. In this section, we also work out a counterexample to intrinsic ergodicity in detail.

\section{Preliminaries}
\label{SEC:Preliminaries}

\subsection{Symbolic notation}

An \emph{alphabet} $\mc{A}$ is a finite collection of symbols, which we call \emph{letters}. 
We call a finite concatenation of letters a \emph{word}, and let $\mc A^{+}$ denote the set of all non-empty finite words with letters from $\mc A$. 
We write $\lvert u \rvert$ for the length of $u$ and, for each $a \in \mc A$, let $\lvert u \rvert_a$ denote the number of occurrences of $a$ in $u$. 
The \emph{Abelianisation} $\phi$ of a word $v \in \mc A^+$ is the vector $\phi(v) \in \N_0^\mc A$ with $\phi(v)_a = |v|_a$ for all $a \in \mc A$. A \emph{subword} of a word $u \in \mc A^n$ is a word $v$ such that $v = u_{[i,j]} \coloneqq u_i \cdots u_j$ for some $1 \leqslant i \leqslant j \leqslant n$. We write $|u|_v$ for the number of times that $v$ appears as a subword of $u$.

We let $\mc{A}^{\Z}$ denote the set of all bi-infinite sequences of elements in $\mc{A}$ and endow $\mc{A}^{\Z}$ with the discrete product topology. 
With this topology, the space $\mc{A}^{\Z}$ is compact and metrisable. 
We let $S$ denote the usual (left-)shift map on $\mc{A}^\Z$, given by $S(x)_n = x_{n+1}$ for all $x \in \mc A^\Z$ and $n \in \Z$. 
If $i, j \in \mathbb{Z}$ with $i \leq j$ and $x = \cdots x_{-1} x_{0} x_{1} \cdots \in \mc{A}^{\mathbb{Z}}$, then we write $x_{[i,j]} = x_i x_{i+1} \cdots x_{j}$. 
A \emph{subshift} $X$ is a closed and $S$-invariant subspace of $\mc A^{\Z}$. 
For $v \in \mc A^n$ the corresponding \emph{cylinder set} is $[v] = \{x \in X : x_{[0,n-1]} = v \}$.

For a given set $B$, we write $\# B$ for the cardinality of $B$ and let $\mc{F}(B)$ be the set of non-empty finite subsets of $B$. If $A,B \subset \mc A^+$, we write $AB = \{ uv : u\in A, v \in B\}$ for the set of all concatenations.

\subsection{Random substitutions}

There are several ways to define a random substitution. We start with a purely combinatorial definition.

\begin{definition}
    A \emph{random substitution} on a finite alphabet $\mc A$ is a set-valued function $\vartheta \colon \mc A \to \mc F(\mc A^+)$. It extends to words via
    \[
    \vartheta(v_1 \cdots v_n) = \vartheta(v_1) \cdots \vartheta(v_n),
    \]
    for $v \in \mc A^n$ and $n \in \N$, and to sets of words via $\vartheta(A) = \cup_{v \in A} \vartheta(v)$ for all $A \subset \mc F(\mc A^+)$. 
\end{definition}

As we define it here, a random substitution $\vartheta$ does not, \textit{a priori}, carry a probabilistic structure. This is because several properties of $\vartheta$ do not depend on the choices of the probabilities, and we wish to keep the flexibility to alternate between different probabilistic structures. In fact, there are several works on random substitutions that use this set-valued definition without ever assigning any probabilities \cite{gohlke, Rust_Miro_Sadun, Rust_periodic_points}.

Note that expressions like $\vartheta^2 = \vartheta \circ \vartheta$ are well defined. For convenience, we let $\vartheta^0$ denote the identity map. We call every $v \in \vartheta^n(a)$ a (level-$n$) \emph{inflation word} of type $a$.

\begin{example}
    The \emph{random Fibonacci substitution} on $\mc A= \{a,b\}$ is given by $\vartheta\colon a \mapsto \{ab,ba \}, b \mapsto \{ a\}$. We can iterate this to obtain 
    \[
    \vartheta^2(a) = \vartheta(\{ ab,ba\}) =  \vartheta(ab) \cup \vartheta(ba) = \{aba,baa \} \cup \{ aab,aba \}
    = \{ aab, aba, baa \}.
    \]
\end{example}

\begin{definition}
    Let $\vartheta$ be a random substitution on $\mc A$. The \emph{language} of $\vartheta$ is given by
    \[
    \mc L_\vartheta = \{v \in \mc A^+ : v \mbox{ is a subword of some } w \in \vartheta^n(a), \mbox{ for some } a \in \mc A, n \in \N_0 \},
    \]
    and the \emph{subshift associated with $\vartheta$} is given by
    \[
    X_{\vartheta}= \{x \in \mc A^\Z : x_{[i,j]} \in \mc L_{\vartheta} \mbox{ for all } i\leqslant j \}.
    \]
\end{definition}

Note that if $\# \vartheta(a) = 1$ for all $a \in \mc A$, our notion of a random substitution coincides with the standard definition of a substitution (identifying every singleton set with its unique element). In this case we say that $\vartheta$ is \emph{deterministic}. We recall a few basic notions about substitutions.

\begin{definition}
    Given a substitution $\theta \colon \mc A \to \mc A^+$, its \emph{substitution matrix} $M = M_\theta \in \N_0^{\mc A \times \mc A}$ is given by
    \[
    M_{ab} = |\theta(b)|_a = \phi(\theta(b))_a.
    \]
    We call $\theta$ \emph{primitive} if $M$ is a primitive matrix, that is, if $M^p$ is strictly positive for some $p \in \N$.
\end{definition}

It is sometimes convenient to regard a random substitution as a local mixture of substitutions.

\begin{definition}
    A \emph{marginal} of a random substitution $\vartheta$ is a map $\theta \colon \mc A \to \mc A^+$ such that $\theta(a) \in \vartheta(a)$ for all $a \in \mc A$. We say that $\vartheta$ is \emph{primitive} if there is some $n \in \N$ such that $\vartheta^n$ has a primitive marginal.
    We call $\vartheta$ \emph{geometrically compatible} if there is some $\lambda > 1$ and a vector $L$ with strictly positive entries, such that $L$ is a left eigenvector with eigenvalue $\lambda$ for the substitution matrix of every marginal of $\vartheta$.
\end{definition}

Primitivity is a standard assumption which ensures that the corresponding subshift is non-empty and topologically transitive \cite{rust-spindeler}, and we will assume that $\vartheta$ is primitive throughout most of this work. 

There are two special cases of geometrical compatibility that have received some attention in the past. We say that $\vartheta$ is of \emph{constant length} $\ell$ if $|v| = \ell$ for all $v \in \vartheta(a)$ and $a \in \mc A$, and we call it \emph{compatible} if each of its marginals has the same substitution matrix.
The relationship between these three conditions is illustrated in Figure \ref{FIG:1}.
We highlight that all of the inclusions here are strict, and that there is no general relation between constant length and compatible random substitutions.

\begin{remark}\label{rem:URP}
    We highlight that every geometrically compatible random substitution $\vartheta$ has the property of having \emph{unique realisation paths}: for each $u \in \mc A^n$ and $v \in \vartheta(u)$ there is a unique way to write $v = v_1 \cdots v_n$ with $v_i \in \vartheta(u_i)$; compare \cite{MT-entropy} for details. 
\end{remark}

\begin{figure}

\centering

\begin{tikzpicture} 

\node (1) at (-2.000000,0.000000) {Constant length};

\node (2) at (2.000000,0.000000) {Compatible}; 

\node (3) at (0.000000,-1.50000) {Geometrically compatibile}; 

\draw [shorten >=0.5em,shorten <=0em,-implies, double equal sign distance] (1) -- (3);

\draw [shorten >=0.5em,shorten <=0em,-implies, double equal sign distance] (2) -- (3);

\end{tikzpicture}

\caption{Implication diagram for some conditions on primitive random substitutions.} \label{FIG:1}

\end{figure}
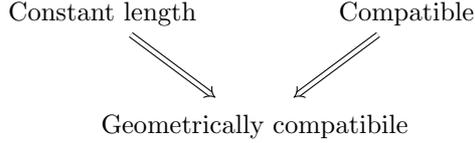

We extend the action of $\vartheta$ to bi-infinite sequences in the obvious way. More precisely, for $x \in \mc A^\Z$, let 
\[
\vartheta(x) 
= \{ \cdots v_{-2} v_{-1}. v_0 v_1 \cdots : v_i \in \vartheta(x_i) \mbox{ for all } i \in \Z \}.
\]
Here, the lower dot separates the positions indexed by $-1$ and $0$ in $\mc A^\Z$.

\begin{definition}
\label{DEF:recognisability}
    A random substitution $\vartheta$ is called \emph{recognisable} if for every $y \in X_{\vartheta}$, there exists a unique $x \in X_{\vartheta}$, a unique sequence $(v_i)_{i \in \Z}$ with $v_i \in \vartheta(x_i)$ for all $i$, and a unique $0\leqslant k < |v_0|$, such that
    \[
    S^{-k} y = \cdots v_{-2} v_{-1}. v_0 v_1 \cdots.
    \]
    We call $ (x,k,(v_i)_{i \in \Z})$ the \emph{recognisability data} of $y$ with respect to $\vartheta$.
\end{definition}

It is straightforward to verify that $\vartheta$ being recognisable implies that $\vartheta^n$ is recognisable for all $n \in \N$.
In the special case that for all $a \in \mc A$ all words in $\vartheta(a)$ have the same length, our definition of recognisability coincides with the definition used in earlier work \cite{fokkink-rust,MT-entropy}. 

Every recognisable random substitution satisfies the \emph{disjoint set condition}, meaning that for all $a \in \mc A$ and $u,v \in \vartheta(a)$, we have $\vartheta^n(u) \cap \vartheta^n(v) = \varnothing$ for all $n \in \N$. The proof of this fact carries over verbatim from the slightly more restrictive definition used in \cite[Lemma~4.5]{MT-entropy}. The disjoint set condition often simplifies the calculation of entropy, both in the topological and measure theoretic setting \cite{gohlke,MT-entropy}.

\subsection{Probabilistic aspects}

In this section, we equip a random substitution $\vartheta$ with a probabilistic structure by choosing probability vectors on each of the sets $\vartheta(a)$ with $a \in \mc A$. This approach goes back to Peyri\`{e}re \cite{peyriere} and was pursued further by Denker and Koslicki \cite{koslicki, koslicki-denker}.

\begin{definition}
    Let $\vartheta$ be a random substitution and $I = \cup_{a \in \mc A} \vartheta(a)$.
    A \emph{probability choice} for $\vartheta$ is a column stochastic matrix $\mathbf{P} \in [0,1]^{I \times \mc A}$ such that $\mathbf{P}_{u,a} = 0$ if $u \notin \vartheta(a)$. We call $\mathbf{P}$ \emph{non-degenerate} if $\mathbf{P}_{v,a} > 0$ for all $v \in \vartheta(a)$ and $a \in \mc A$.
\end{definition}

We regard $\mathbf{P}_{v,a}$ as the probability of choosing the realisation $v \in \vartheta(a)$ when applying $\vartheta$ to $a$. With some abuse of notation and in line with the usual convention, we often refer to the pair $\vartheta_{\mathbf{P}} = (\vartheta, \mathbf{P})$ as a random substitution as well. If $\vartheta(a) = \{ u_1,\ldots, u_n\}$, it is customary to represent the combined data of $\vartheta_\mathbf{P}$ as
\[
\vartheta_\mathbf{P} \colon a \mapsto
\begin{cases}
u_1 & \mbox{with probability } \mathbf{P}_{u_1,a},
\\ \hfill \vdots \hfill & \hfill \vdots \hfill
\\ u_n & \mbox{with probability } \mathbf{P}_{u_n,a}.
\end{cases}
\]

In the following we assume that $\vartheta$ has \emph{unique realisation paths}: for each $u \in \mc A^n$ and $v \in \vartheta(u)$ there is a unique way to write $v = v_1 \cdots v_n$ with $v_i \in \vartheta(u_i)$. 
This formulation is sufficient for our purposes since every geometrically compatible random substitution has this property (recall Remark~\ref{rem:URP}).

Reflecting the idea that neighbouring letters are mapped independently, we extend $\mathbf{P}$ to a countable state Markov matrix in $[0,1]^{\mc A^+ \times \mc A^+}$ via $\mathbf{P}_{v,u} = 0$ if $v \notin \vartheta(u)$, and by setting for all $u = u_1 \cdots u_n \in \mc A^n$ and $v= v_1 \cdots v_n \in \vartheta(u)$,
\[
\mathbf{P}_{v,u} = \prod_{i=1}^n \mathbf{P}_{v_i,u_i}.
\]
Recall that the splitting of $v$ into words $(v_i)_{i=1}^n$ with $v_i \in \vartheta(u_i)$ is unique due to the fact that $\vartheta$ has unique realisation paths. For a more general definition that works beyond the assumption of unique realisation paths, compare \cite{MT-entropy}.
In this notation, expressions like $\mathbf{P}^2 = \mathbf{P} \cdot \mathbf{P}$ are well defined via standard (countable state) matrix multiplication. We emphasise that such a multiplication involves only finite sums, since every column of $\mathbf{P}$ has only finitely many non-zero entries. We also note that $\mathbf{P}^n$ is a valid probability choice for $\vartheta^n$ for each $n \in \N$. 
To avoid cumbersome notation, we will write $\vartheta^n_{\mathbf{P}^n}$ for $(\vartheta^n)_{\mathbf{P}^n} = (\vartheta^n, \mathbf{P}^n)$. 

Given $u \in \mc A^+$, the Markov matrix $\mathbf{P}$ induces a stationary Markov chain $(\vartheta_{\mathbf{P}^n}^{n}(u))_{n \in \mathbb{N}}$ on some probability space $(\Omega_u, \mathcal{F}_u, \mathbb{P}_u)$ via
	\begin{align*}
	\mathbb{P}_u [\vartheta_{\mathbf{P}^{n+1}}^{n+1}(u) = w \mid \vartheta_{\mathbf{P}^n}^{n}(u) = v] = \mathbb{P}_v [\vartheta_{\mathbf{P}}(v) = w] = \mathbf{P}_{w,v},
	\end{align*} 
for all $v,w \in \mathcal{A}^{+}$ and $n \in \mathbb{N}$.  We often write $\mathbb{P}$ for $\mathbb{P}_u$ if the initial word is understood. 
In this case, we write $\mathbb{E}$ for the expectation with respect to $\mathbb{P}$.

Equipping a random substitution $\vartheta$ with a probability choice $\mathbf{P}$ also allows us to define the \textsl{substitution matrix} $M = M(\vartheta_{\mathbf{P}}) \in \mathbb{R}^{\mc A \times \mc A}$ in analogy to deterministic substitutions via
	\begin{align*}
	M_{ab}
	= \mathbb{E}[\lvert \vartheta_{\mathbf{P}}  (b) \rvert_a]
	= \sum_{v \in \vartheta(b)} \mathbf{P}_{v,b} \lvert v \rvert_a.
	\end{align*}
 If $\vartheta$ is fixed, we also write $M(\mathbf{P})$ in place of $M(\vartheta_{\mathbf{P}})$.
 A routine calculation shows that
 \[
 M(\mathbf{P} \mathbf{P}') = M(\mathbf{P}) M(\mathbf{P}').
 \]
If the matrix $M$ is primitive, Perron--Frobenius (PF) theory implies that it has a simple real (PF) eigenvalue $\lambda$ of maximal modulus and that the corresponding left and right (PF) eigenvectors $L = (L_{1}, \ldots, L_{d})$ and $R = (R_{1}, \ldots, R_{d})^{T}$ can be chosen to have strictly positive entries. We normalise the right eigenvector according to $\lVert R \rVert_{1} = 1$. If the product $L R$ is independent of $\mathbf{P}$, we usually normalise $L$ such that $L R=1$. Otherwise, we pick some arbitrary but fixed normalisation of $L$.
Like for deterministic substitutions, primitivity can be characterised purely in terms of the substitution matrix \cite[Lemma~3.2.18]{gohlke-thesis}.

\begin{lemma}
    A random substitution $\vartheta$ is primitive if and only if for some (equivalently all) non-degenerate $\mathbf{P}$ the matrix $M(\mathbf{P})$ is primitive and its PF eigenvalue satisfies $\lambda > 1$.
\end{lemma}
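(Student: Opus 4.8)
The plan is to reduce both implications to the combinatorial \emph{pattern} of $M(\mathbf{P})$ and then to bridge between this pattern and the existence of a primitive marginal. First I would record the key reduction: for any non-degenerate $\mathbf{P}$ one has $M(\mathbf{P})_{ab} > 0$ if and only if the letter $a$ occurs in some $v \in \vartheta(b)$, so the zero--nonzero pattern of $M(\mathbf{P})$ is the fixed $\{0,1\}$-matrix $B$ with $B_{ab} = 1$ exactly in that case, independent of the non-degenerate choice $\mathbf{P}$. Since a non-negative matrix is primitive if and only if some power is strictly positive, and $(M^k)_{ab}>0$ depends only on the existence of a length-$k$ path in $\operatorname{pat}(M)$, primitivity of $M(\mathbf{P})$ is a property of $B$ alone; combined with the fact, proved below, that $\lambda>1$ is likewise equivalent to a $\mathbf{P}$-free property of $\vartheta$, this yields the ``for some (equivalently all)'' clause. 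Throughout I would use $M(\mathbf{P})^n = M(\mathbf{P}^n)$ together with the fact that $\mathbf{P}^n$ is a non-degenerate probability choice for $\vartheta^n$, so that $\operatorname{pat}(M(\mathbf{P})^n) = \operatorname{pat}(B^n)$ records precisely which letters occur in some word of $\vartheta^n(b)$.

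For the forward implication, suppose $\vartheta$ is primitive, so $\vartheta^n$ has a primitive marginal $\Theta$ for some $n$, with $M_\Theta^q$ strictly positive. If $(M_\Theta)_{ab}>0$ then $a$ occurs in $\Theta(b)\in\vartheta^n(b)$, hence $M(\mathbf{P}^n)_{ab}>0$; thus $\operatorname{pat}(M(\mathbf{P})^n)$ dominates $\operatorname{pat}(M_\Theta)$ entrywise. Pattern domination is preserved under powers, so $M(\mathbf{P})^{nq}$ dominates $M_\Theta^q>0$ and $M(\mathbf{P})$ is primitive. To get $\lambda>1$ I would show that $\lambda=1$ forces $\vartheta$ to be letter-to-letter. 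Indeed $\lambda\geq1$, since each column sum of $M(\mathbf{P})$ equals $\mathbb{E}\lvert\vartheta(b)\rvert\geq1$; and if $\lambda=1$, testing the left PF eigenvector $L>0$ against a letter $b^\ast$ minimising $L_{b^\ast}=m$ gives $m=\sum_a L_a M_{ab^\ast}=\mathbb{E}[\sum_a L_a\lvert\vartheta(b^\ast)\rvert_a]\geq m\,\mathbb{E}\lvert\vartheta(b^\ast)\rvert\geq m$, forcing every $v\in\vartheta(b^\ast)$ to be a single letter of $L$-value $m$. Applying this to every minimiser shows $S=\{a:L_a=m\}$ is $\vartheta$-invariant through letter-to-letter images, and irreducibility of $M(\mathbf{P})$ then forces $S=\mc A$, so $\vartheta$ is entirely letter-to-letter. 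As a letter-to-letter substitution on at least two letters has only function-matrix marginals, none of which is primitive, this contradicts primitivity; hence $\lambda>1$, the only genuine exception being the degenerate single-letter case $\vartheta\colon a\mapsto\{a\}$, which I would exclude.

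For the reverse implication I would construct a primitive marginal of some $\vartheta^N$ directly, and this is the main obstacle: the hypothesis $B^p>0$ (for some $p$, from primitivity of $M(\mathbf{P})$) only supplies, for each ordered pair $(a,b)$, \emph{some} word of $\vartheta^p(b)$ containing $a$ via possibly different realisations, whereas a marginal must commit to one realisation per letter that connects all pairs at once. The key is to produce, for each $b$, a single full-support word in $\vartheta^N(b)$ at a common level $N$. Here $\lambda>1$ enters decisively: as $M(\mathbf{P})$ is primitive with $\lambda>1$, the expected lengths $\mathbb{E}\lvert\vartheta^n(b)\rvert=(\mathbf{1}^{T}M(\mathbf{P})^n)_b$ grow like $\lambda^n$, so for large $n$ some $w\in\vartheta^n(b)$ has length at least $d=\#\mc A$; expanding $w$ by $\vartheta^p$ and using $B^p>0$ to route $d$ distinct positions of $w$ onto the $d$ target letters yields a full-support word in $\vartheta^{n+p}(b)$. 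Since $\operatorname{pat}(B^k)$ is all-positive for every $k\geq p$, any such word lifts to a common level: choosing $N$ with $N-N_b\geq p$ for all $b$ and re-routing $d$ positions onto the $d$ targets gives full-support words simultaneously in every $\vartheta^N(b)$. Taking $\Theta(b)$ to be such a word produces a marginal of $\vartheta^N$ with $M_\Theta$ strictly positive, hence primitive, so $\vartheta$ is primitive. The only points needing care are that a full-support word has length at least $d$, so that $d$ distinct positions are available, and the elementary verification that $B^k$ has positive pattern for $k\geq p$; both are routine.
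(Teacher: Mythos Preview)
The paper does not supply its own proof of this lemma; it is quoted without argument from \cite[Lemma~3.2.18]{gohlke-thesis}. Your proposal is a correct, self-contained proof. The forward direction is clean: pattern domination $\operatorname{pat}(M(\mathbf{P})^n)\geqslant\operatorname{pat}(M_\Theta)$ transfers to powers, and your $\lambda=1$ analysis via the minimising set of the left PF vector correctly forces $\vartheta$ to be letter-to-letter, which cannot have a primitive marginal on more than one letter; your explicit exclusion of the degeneracy $\vartheta\colon a\mapsto\{a\}$ is necessary and rightly flagged. For the reverse direction, the construction---use $\lambda>1$ to find a realisation of length at least $d=\#\mc A$ in some $\vartheta^{n}(b)$, then exploit the all-positive pattern $B^p$ to route $d$ positions onto the $d$ letters, producing a full-support word in $\vartheta^{n+p}(b)$, and finally lift to a common level $N$---is exactly the right idea and works as written. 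One small remark: your invocation of $M(\mathbf{P})^n = M(\mathbf{P}^n)$ leans on the extension of $\mathbf{P}$ to words, which the paper sets up only under geometric compatibility; but everything you need follows directly from the pattern identity $(B^n)_{ab}>0 \iff a$ occurs in some word of $\vartheta^n(b)$, so the argument goes through without that hypothesis.
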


If $\vartheta$ is geometrically compatible, the corresponding data $\lambda$ and $L$ is precisely the PF data of $M(\mathbf{P})$ for all $\mathbf{P}$. However, the \emph{right} PF eigenvector $R$ of $M(\mathbf{P})$ does depend on $\mathbf{P}$ in the general case.

There is a special family of probability choices that is closely related to the structure of the important family of \emph{uniformity measures} which will be introduced in \Cref{DEF:uniformity-measure}.

\begin{definition}
\label{DEF:n-productivity}
    For $n \in \N_0$, the \emph{$n$-productivity distribution for $\vartheta$} is the probability choice $\mathbf{P}^{n,1}$ with
    \[
    \mathbf{P}^{n,1}_{v,a} = \frac{\#\vartheta^n(v)}{\#\vartheta^{n+1}(a)},
    \]
    for all $v \in \vartheta(a)$.
\end{definition}

The fact that $\mathbf{P}^{n,1}$ weighs inflation words according to their productivity under $\vartheta^n$ can be seen as an attempt to prepare for a uniform distribution after $n$ more applications of $\vartheta$. 

\subsection{Measures along (random) words}
Given a word $w \in \mc A^+$, let $w^\Z$ be the bi-infinite periodic sequence with period length $|w|$ and starting with the word $w$. The unique invariant measure on the orbit of $w^\Z$ with total mass $|w|$ is given by
\[
\mu_w = \sum_{i=0}^{|w| - 1} \delta_{S^i w^\Z}.
\]
It follows directly that $\mu_w([a]) = |w|_a$ for all $a \in \mc A$. Hence, writing $R^\mu := (\mu([a]))_{a \in \mc A}$ for an invariant measure $\mu$, we obtain that
\[
R^{\mu_w} = \phi(w).
\]
More generally, for any word $v \in \mc A^+$, we find that $0 \leqslant \mu_w([v]) - |w|_v \leqslant |v|$, where the maximal discrepancy $|v|$ emerges from occurrences of $v$ in $w^\Z$ that overlap several copies of $w$.

Given a random word $\omega$, the expression $\mu_\omega$ is a random measure, and we assign an invariant \emph{probability} measure, called the \emph{periodic measure representation of $\omega$} via
\[
\overline{\mu}_{\omega} = \frac{\mathbb{E}[\mu_\omega]}{\mathbb{E}[|\omega|]}.
\]

\begin{definition}
Let $(\omega_n)_{n \in \N}$ be a sequence of random words such that $\mathbb{E}[ |\omega_n| ] \to \infty$ as $n \to \infty$. We call every accumulation point of $(\overline{\mu}_{\omega_n})_{n \in \N}$ an \emph{accumulation measure} of $(\omega_n)_{n \in \N}$. If $(\overline{\mu}_{\omega_n})_{n \in \N}$ converges to some measure $\mu$, we call $\mu$ the \emph{limit measure} of $(\omega_n)_{n \in \N}$.
\end{definition}

If $\mu$ is the limit measure of $(\omega_n)_{n \in \N}$, its value on cylinder sets can be given explicitly by
\begin{equation}
\label{EQ:limit-measure-on-cylinders}
\mu([v]) = \lim_{n \to \infty} \frac{\mathbb{E}[|\omega_n|_v]}{\mathbb{E}[|\omega_n|]}.
\end{equation}
We directly obtain from \eqref{EQ:limit-measure-on-cylinders} that, whenever all realisations of $\omega_n$ are in $\mc L_{\vartheta}$ for all $n \in \N$, every corresponding accumulation measure is supported on $X_{\vartheta}$.

The sequence $(\vartheta^n_{\mathbf{P}^n}(a))_{n \in \N}$ has a well-defined limit measure $\mu_{\mathbf{P}}$, which is the same for all $a \in \mc A$. This measure is called the \emph{frequency measure} of $\vartheta_\mathbf{P}$ and is known to be ergodic under the shift map \cite{gohlke-spindeler}. 
A systematic approach to calculating the (measure theoretic) entropy of random substitution subshifts with respect to frequency measures was developed in \cite{MT-entropy}.

\subsection{Geometric hull}
Let $\vartheta$ be a primitive geometrically compatible random substitution.
The assumption of geometric compatibility gives that the Perron--Frobenius eigenvalue $\lambda$ and corresponding left eigenvector are independent of the choice of probabilities.
This allows us to choose well-defined tile lengths.
Let $\Length$ denote the tile length vector, which is some normalisation of the left PF eigenvector.
For each $w \in \mc L_{\vartheta}$, we write $\Length(w) = \sum_{a \in \mc A} \Length_a \lvert w \rvert_a$ for the \emph{geometric length} of $w$.

We will define the geometric hull of $\vartheta$ as an appropriate suspension flow. A \emph{roof function} on a subshift $X$ is a positive continuous function $\pi \colon X \rightarrow \R_{+}$ that is bounded away from $0$. The suspension flow of $X$ with roof function $\pi$ is defined by
\begin{align*}
     \operatorname{Sus}(X,\pi) = \left\{ (x,s) \,:\, x \in X,\, 0 \leq s \leq \pi (x) \right\} \subset X_{\vartheta} \times \R \tc
\end{align*}
where we identify points according to the relation $(x,s+\pi(x)) \sim (S(x),s)$, which defines an equivalence relation on the transitive closure.
For each $t \in \R$, define $T_t (x,s) = (x,s+t)$, which is well defined on $\operatorname{Sus}(X,\pi)$ via the equivalence relation above. Thus, $T = \{T_t\}$ is a one-parameter transformation group on $\operatorname{Sus}(X,\pi)$.

\begin{definition}
The \emph{geometric hull} $(Y_\vartheta,T)$ of a primitive, geometrically compatible random substitution $\vartheta$ is the suspension of $(X_{\vartheta},S)$ with roof function $\pi(x) = \Length_{x_0}$.
\end{definition}

We recall a few facts about the invariant measures on suspension systems; see \cite{Barreira_Radu_Wolf, Parry_Pollicott} for details.
Let $m$ be the Lebesgue measure on the real line.
Every $S$-invariant probability measure $\mu$ on $X_\vartheta$ can be lifted to a $T$-invariant measure $\widetilde{\mu}$ via
\[
\widetilde{\mu} = \frac{(\mu \times m)|_{Y_{\vartheta}}}{(\mu \times m)(Y_{\vartheta})}.
\]
The map $\mu \mapsto \widetilde{\mu}$ defines a bijection between the invariant measures on $(X_{\vartheta},S)$ and $(Y_{\vartheta},T)$.
Moreover, $\widetilde{\mu}$ is ergodic if and only if $\mu$ is ergodic.
By Abramov's formula, the relationship between the entropies of $\mu$ and $\widetilde{\mu}$ is given by
\[
h_{\widetilde{\mu}}(Y_\vartheta) = \frac{h_\mu(X_\vartheta)}{\mu(\pi)},
\]
using the notation $\mu(\pi):= \int \pi \dd \mu$.
For roof functions of the form $\pi(x) = \Length_{x_0}$, the normalisation factor $\mu(\pi)$ can be expressed as
\[
\mu(\pi) = \sum_{a \in \mc A} \Length_{a} \mu([a]) = \Length R^\mu.
\]

\begin{definition}
    The \emph{geometric entropy} of a shift-invariant probability measure $\mu$ on $X_{\vartheta}$ is the quantity $\hg_{\mu}:= h_{\widetilde{\mu}} = h_{\mu}/LR^\mu$. We call $\mu$ a measure of \emph{maximal geometric entropy} if $\hg_{\mu} = h_{\operatorname{top}}(Y_\vartheta)$.
\end{definition}

Note that $\mu$ is a measure of maximal geometric entropy if and only if $\widetilde{\mu}$ is a measure of maximal entropy on the suspension $(Y_{\vartheta},T)$.
Since the map $\mu \mapsto \widetilde{\mu}$ forms a bijection between the spaces of invariant measures, intrinsic ergodicity of $(Y_\vartheta,T)$ is equivalent to the existence of a unique measure of maximal \emph{geometric} entropy on $X_{\vartheta}$.
If there is a vector $R \in \R^\mc A$ such that the letter frequencies of all $x \in X_\vartheta$ are given by $R$, the normalisation factor $\mu(\pi) = \Length R$ is uniform.
In this case, the measures of maximal entropy on $Y_\vartheta$ are precisely the lifts of the measures of maximal entropy on $X_\vartheta$.
The same holds in the constant length setting, where the roof function is constant.

Another convenient interpretation of $Y_\vartheta$ is via Delone sets with finite local complexity, following the approach in \cite{Baake_Lenz_Richard}. Indeed, every $y \in Y_\vartheta$ can be represented as a (coloured) Delone set $\mc D(y) = \{\mc D_a(y) \}_{a \in \mc A}$, given by
\[
\mc D_a(y) = \{t \in \R : T_t(y) \in [a] \times \{0\} \}. 
\]
for each $a \in \mc A$. We define the intersection of a coloured set $A=\{A_a \}_{a \in \mc A}$ with a subset $B \subset \R$ as $A\cap B = \{A_a \cap B \}_{a \in \mc A}$. Similarly, $A-t = \{A_a - t \}_{a \in \mc A}$.
The image $\mc D(Y_\vartheta)$ is a space of coloured Delone sets of finite local complexity, which we equip with a (metrisable) topology in which two coloured sets are close if they agree on a large ball around the origin up to a small translation. 
It is straightforward to verify that $\mc D \colon Y_\vartheta \to \mc D(Y_\vartheta)$ is a homeomorphism and that it intertwines $T_t$ with $T'_t \colon A \mapsto A-t$ in the sense that $\mc D \circ T_t = T'_t \circ \mc D$ for all $t \in \R$.
Hence, the systems $(Y_\vartheta,T)$ and $(\mc D(Y_\vartheta), T')$ with $T'=\{T'_t \}$ are isomorphic; in particular, they have the same topological entropy.
A \emph{patch} of size $\ell \in [0,\infty)$ is an element of 
\[
\mc P_\vartheta(\ell) = \{\mc D(y) \cap [0,\ell): y \in X_\vartheta \times \{0\} \}.
\]
The \emph{patch counting function} $p_\vartheta : \R_+ \to \N$, with $p_\vartheta(\ell) = \# \mc P_\vartheta(\ell)$, satisfies
\[
p_\vartheta(\ell) = \# \{w \in \mc L_\vartheta: \Length(w_{[1,|w| -1]}) < \ell \leqslant \Length(w)  \}.
\]
Since $X_\vartheta$ (and hence $\mc D(Y_\vartheta)$) contains a point with dense orbit, we can use \cite[Thm.~1]{Baake_Lenz_Richard} to obtain the topological entropy of $Y_\vartheta$ from the exponential growth rate of $p_\vartheta$ via
\[
h_{\operatorname{top}}(Y_\vartheta)
= h_{\operatorname{top}}(\mc D(Y_\vartheta)) 
= \limsup_{\ell \to \infty} \frac{1}{\ell} \log (p_\vartheta(\ell)).
\]

Instead of considering all patterns that are close to a given length, we can restrict our attention to those that arise directly from iterating $\vartheta$. This gives rise to the following.

\begin{definition}
    Let $\vartheta$ be a geometrically compatible random substitution.
    For each $a \in \mc A$, we define the \emph{geometric inflation word entropy of type $a$} by
        \begin{equation*}
        h_a^G = \lim_{m \rightarrow \infty} \frac{1}{\Length(\vartheta^m (a))} \log (\# \vartheta^m (a))\tc
    \end{equation*}
    provided this limit exists.
\end{definition}

\subsection{Shuffle group and uniformity measures}

Let us assume that $\vartheta$ is recognisable and geometrically compatible. For $(y,s) \in Y_\vartheta$, let $( x,k,(v_i)_{i \in \Z} )$ be the recognisability data of $y$. Then, we define the recognisability data of $(y,s)$ by $(x, t, (v_i)_{i \in \Z})$, where $0\leqslant t <L(v_0) $ is the unique element such that $(y,s) = T_t (\cdots v_{-2} v_{-1}.v_0 v_1 \cdots,0)$.

The recognisable structure can be harvested to define a large number of symmetry relations that exchange inflation words of the same type and level. This idea was developed in \cite{fokkink-rust} under the additional assumption of compatibility, but the definition extends to the geometrically compatible setting. For an element $\alpha \in \operatorname{Sym}(\vartheta(a))$ of the permutation group on the set $\vartheta(a)$, the function $f_\alpha \colon Y_\vartheta \to Y_\vartheta$ is defined by replacing each word $v_i \in \vartheta(x_i)$ by $\alpha(v_i)$ whenever $x_i=a$ in the recognisability decomposition of $(y,s)$. More precisely, for $(y,s)$ with recognisability data $(x,t,(v_i)_{i \in \Z})$, we set
\[
f_{\alpha}((y,s)) = T_t( \cdots w_{-2} w_{-1}.w_0 w_1 \cdots, 0), 
\]
where $w_i = \alpha(v_i)$, whenever $ x_i=a$ and $w_i=v_i$ otherwise.
Since all elements of $\vartheta(a)$ have the same geometric length, the recognisability data of $f_{\alpha}(y,s)$ is given by $(x,t,(w_i)_{i \in \Z})$.  In particular, $f_{\alpha}$ commutes with the action of $T$. 
We call $f_{\alpha}$ a $\vartheta$-shuffle, and $\vartheta^n$-shuffles are defined accordingly.

\begin{definition}[\cite{fokkink-rust}]
For each $a \in \mc A$ and $n \in \N$, let $\Gamma_{n,a} = \{f_\alpha : \alpha \in \operatorname{Sym}(\vartheta^n(a)) \}$ and $\Gamma_n = \prod_{a \in \mc A} \Gamma_{n,a}$. We call
$\Gamma = \cup_{n \in \N} \Gamma_n$ the \emph{shuffle group} of $\vartheta$.
\end{definition}

\begin{remark}
    Note that each element in $\Gamma$ inherits the property of commuting with the action of $T$. In \cite{fokkink-rust}, the shuffle group was introduced as acting on the subshift $X_{\vartheta}$ instead of its suspension, but the naive generalisation of this definition beyond the compatible setting fails to commute with the shift action. This is why we chose to introduce the shuffle group as a symmetry of the suspension $(Y_{\vartheta},T)$ where the commutation relation with the translation is ensured by the weaker assumption of geometric compatibility.
\end{remark}

Continuity of $f \in \Gamma$ is inherited from the fact that the recognisability data of $y \in X_\vartheta$ depends continuously on $y$, see \Cref{LEM:recognisability-continuous}. Hence, $\Gamma$ is a subgroup of the automorphism group on $(Y_\vartheta,T)$.
It should be noted that shuffles are nested in the sense that $\Gamma_n$ is a subgroup of $\Gamma_{n+1}$ for all $n \in \N$. A special role will be played by those measures that respect all of these symmetry relations.

\begin{definition}
\label{DEF:uniformity-measure}
    A shift-invariant probability measure $\mu$ on $X_\vartheta$ is called a \emph{uniformity measure} if its lift $\widetilde{\mu}$ is invariant under $\Gamma$, that is, if it satisfies $\widetilde{\mu} \circ f = \widetilde{\mu}$ for all $f \in \Gamma$. 
\end{definition}

We will see later that uniformity measures always exist and have full topological support.

\subsection{The geometric substitution matrix}

A consequence of geometric compatibility is that a letter $a \in \mc A$ can be interpreted as a placeholder for an interval of length $\Length_a$. The random substitution can then be thought to act on intervals by inflating the tile by a factor $\lambda$ and randomly dissecting into intervals corresponding to letters in $\mc A$; compare \Cref{FIG:geom-compatible}. The overall length of intervals of type $a$ in $\vartheta_{\mathbf{P}}(b)$ is then given by $|\vartheta_{\mathbf{P}}(b)|_a \Length_a$, whereas the total geometric length of $\vartheta_{\mathbf{P}}(b)$ is given by $\lambda \Length_b$.
This motivates the following concept.

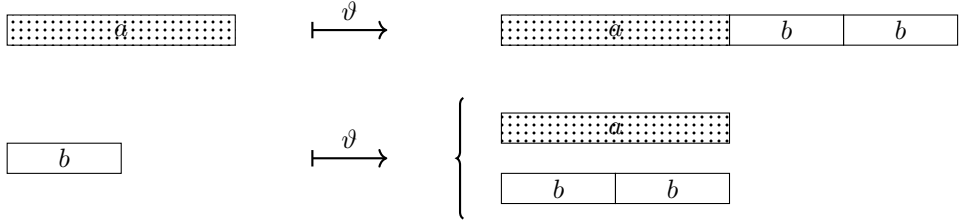
\begin{figure}
\begin{tikzpicture}
\draw[pattern = dots] (0,0) rectangle  (3,0.4);
\node[draw=white, fill=white, inner sep=2pt] at (1.5,0.2) {$a$};

\draw[|->,thick] (4.0,0.2) -- node[anchor = south]{$\vartheta$} (5,0.2);

\draw[pattern = dots] (6.5,0) rectangle (9.5,0.4);
\node[draw=white, fill=white, inner sep=2pt] at (8,0.2) {$a$};
\draw (9.5,0) rectangle node{$b$} (11,0.4);
\draw (11,0) rectangle node{$b$} (12.5,0.4);

%%NOW: for the b-tile%%

\draw  (0, - 1.7) rectangle node{$b$} (1.5, -1.3);
\draw[|->,thick] (4.0, - 1.5) -- node[anchor = south]{$\vartheta$} (5, - 1.5);
\draw [pattern = dots] (6.5, - 1.3) rectangle (9.5, -0.9);
\node[draw=white, fill=white, inner sep=2pt] at (8,-1.1) {$a$};
\draw (6.5, - 2.1) rectangle node{$b$} (8, - 1.7);
\draw (8, - 2.1) rectangle node{$b$} (9.5, - 1.7);

\draw [
    thick,
    decoration={
        brace,
%        mirror,
%        raise=0.3cm
    },
    decorate
] (6,-2.3) -- (6, -0.7); 

\end{tikzpicture}
\caption{ $\vartheta \colon a \mapsto \{ abb\}, \, b \mapsto \{a,bb\}$, geometrically compatible with $\lambda = 2$ and $L = (2,1)$.}
\label{FIG:geom-compatible}
\end{figure}

\begin{definition}
The \emph{geometric substitution matrix} $Q = Q(\mathbf{P}) = Q(\vartheta_{\mathbf{P}})$ of a geometrically compatible random substitution $\vartheta_{\mathbf{P}}$ is the Markov matrix given by
\[
Q_{ab} = \frac{\Length_a}{\lambda \Length_b} M_{ab}
= \frac{\mathbb{E}[|\vartheta_{\mathbf{P}}(b)|_a] \Length_a}{\lambda \Length_b}.
\]
\end{definition}

For some applications, the Markov property poses a technical advantage over the use of the standard substitution matrix.
The geometric substitution matrix controls the expected change of geometric proportions covered by the intervals of different types. To be more precise, for a word $w$ we consider the geometric proportion vector $\phi^{\operatorname{g}}(w)$, with
\[
\phi^{\operatorname{g}}(w)_a = \frac{\Length_a \phi(w)_a}{\Length \phi(w)},
\]
and obtain via a straightforward calculation,
\[
\mathbb{E} \bigl[\phi^{\operatorname{g}}(\vartheta_{\mathbf{P}}(w)) \bigr]
= Q(\mathbf{P}) \phi^{\operatorname{g}}(w).
\]
In the same vein, it will sometimes be useful to consider a geometric analogue of the letter frequencies $R^\mu$ of an $S$-invariant measure $\mu$, given by the \emph{interval proportion vector} $\pi^\mu$, with
\begin{equation}
\label{EQ:interval-proportion-vector}
\pi^\mu_a = \frac{ \Length_a R^\mu_a}{\Length R^\mu},
\end{equation}
representing the relative geometric proportion of intervals of type $a$ witnessed by $\mu$.

\subsection{Induced transformation}

Given a compact dynamical system $(X,S)$ with invariant probability measure $\mu$ and a measurable (compact) subset $A \subset X$ with $\mu(A)>0$, the \emph{return time} $r_A\colon A \to \N \cup \{\infty\}$ is given by
\[
r_A(x) = \inf \{ n \in \N: S^n(x) \in A \}.
\]
For our purposes it is sufficient to consider the case of \emph{bounded return times}, that is, we assume that there is $r_{\max} \in \N$ such that $r_A(x) \leqslant r_{\max}$ for all $x \in A$. In this case, the \emph{induced transformation} is the dynamical system $(A,S_A,\mu_A)$, with $\mu_A(E) = \mu(A \cap E)/\mu(A)$ and 
\[
S_A(x) = S^{r_A(x)}(x),
\]
for all $x \in A$. We recall a few well-known facts about induced transformations; see for instance \cite{Sarig}. For instance, the induced measure $\mu_A$ is $S_A$-invariant, and it is ergodic if $\mu$ is an ergodic measure. Another useful tool is \emph{Kac's formula}, which states that
\[
\mu(f) : = \int f \dd \mu = \int_A \sum_{i=0}^{r_A - 1} f \circ S^i \dd \mu
\]
for all $f \in L^1(X,\mu)$. 
The corresponding statement for ergodic measures can be found in \cite[Thm.~1.7]{Sarig}. In fact, the first part of the proof in this reference shows that the statement holds for all invariant measures if $r_A$ is bounded.
Applying Kac's formula with $f\equiv 1$, we obtain that $\mu(A) = \mu_A(r_A)^{-1}$. Hence, 
\[
\int f \dd \mu
= \frac{1}{\mu_A(r_A)} \int_A \sum_{i=0}^{r_A - 1} f \circ S^i \dd \mu_A.
\]
which allows us to express $\mu$ completely in terms of $\mu_A$.

\subsection{Ergodicity of (inverse-time) Markov chains}
\label{SUBSEC:Inverse-time-Markov}

We collect a few basic properties about the convergence of inhomogenenous, finite state Markov chains in inverse time. For background and details, we refer the reader to \cite{Chatterjee_Seneta}.

\begin{definition}
    A sequence of column stochastic matrices $(P_n)_{n \in \N}$ is called \emph{ergodic} (in inverse time) if for each $n \in \N$ there exists a probability vector $\pi^n$ such that
    \[
    \lim_{k \to \infty} P_n \cdots P_{n+k} = \pi^n \mathds{1}^T.
    \]
\end{definition}

It will be convenient to measure the difference of probability vectors via the \emph{variation distance}
\[
d_V(p,q) := \frac{1}{2}|p-q|_1 = \frac{1}{2} \sum_{a \in \mc A} |p_a - q_a|,
\]
for all probability vectors $p,q$ on the state space $\mc A$.
Dobrushin's ergodic coefficient $\delta$ on a (column) stochastic matrix $Q$ is given by
\[
\delta(Q) = \max_{i,j} d_V(Q_{\cdot i},Q_{\cdot j})
= \frac{1}{2} \max_{i,j} \sum_{k} |Q_{ki}-Q_{kj}|.
\]
This coefficient satisfies several convenient properties (see \cite{Bremaud} for more details):
\begin{itemize}
\item $0 \leqslant \delta(Q) \leqslant 1$, for all Markov matrices $Q$;
\item $\delta(Q) = 0$ if and only if all columns of $Q$ coincide;
\item $\delta(Q) = 1$ if and only if there are two columns of $Q$ with disjoint support;
\item $\delta(Q_1 Q_2) \leqslant \delta(Q_1) \delta(Q_2)$
for all Markov matrices $Q_1,Q_2$ with compatible dimensions.
\end{itemize}

In fact, it is possible to express ergodicity (in inverse time) entirely in terms of this coefficient. 

\begin{theorem}[\cite{Chatterjee_Seneta}]
\label{THM:ergodicity-characterisation_and_conditions}
    The sequence $(P_n)_{n \in \N}$ is ergodic in inverse time if and only if
    \[
    \lim_{k \to \infty} \delta(P_n \cdots P_{n+k}) = 0
    \]
    for all $n \in \N$. In particular, each of the following conditions is sufficient (but not necessary) for ergodicity:
    \begin{enumerate}
    \item $\prod_{n \geqslant k} \delta(P_n) = 0$ for all $k \in \N$;
    \item $\lim_{n \to \infty} P_n = P$ for some primitive $P$.
    \end{enumerate}
\end{theorem}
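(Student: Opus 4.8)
The plan is to prove the characterisation first and then derive the two sufficient conditions from it. Throughout I would use two standing facts: a product of column stochastic matrices is again column stochastic, and the target matrix $\pi^n \mathds{1}^T$ has all columns equal to $\pi^n$, so that $\delta(\pi^n\mathds{1}^T)=0$. The forward implication of the equivalence is then almost immediate: $\delta$ is a maximum of $\ell^1$-distances between the columns of its argument and is therefore continuous in the matrix entries, so if $P_n\cdots P_{n+k}\to \pi^n\mathds{1}^T$ as $k\to\infty$ then $\delta(P_n\cdots P_{n+k})\to \delta(\pi^n\mathds{1}^T)=0$.

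The reverse implication is the heart of the matter, and the step I expect to be the main obstacle, because $\delta(P_n\cdots P_{n+k})\to 0$ only asserts that the columns of the partial products become mutually close, not that they converge to anything. To upgrade mutual closeness to genuine convergence I would set $A_{n,k}=P_n\cdots P_{n+k}$ and exploit the following monotonicity: for $m>k$ one has $A_{n,m}=A_{n,k}\,C$ with $C=P_{n+k+1}\cdots P_{n+m}$ column stochastic, so each column of $A_{n,m}$ is a convex combination of the columns of $A_{n,k}$. Since any two columns of $A_{n,k}$ differ by at most $2\delta(A_{n,k})$ in $\ell^1$-norm (directly from the definitions of $\delta$ and $d_V$), this same quantity bounds the distance of every column of $A_{n,m}$ from every column of $A_{n,k}$. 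Hence each fixed column of $(A_{n,k})_k$ is Cauchy with modulus $2\delta(A_{n,k})\to 0$, the mutual closeness forces all columns to share a common limit $\pi^n$, and as a limit of probability vectors $\pi^n$ is again a probability vector, so $A_{n,k}\to\pi^n\mathds{1}^T$.

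For the sufficient conditions I would invoke submultiplicativity $\delta(Q_1Q_2)\le\delta(Q_1)\delta(Q_2)$ together with the characterisation just proved. Condition (1) follows at once: $\delta(P_n\cdots P_{n+k})\le\prod_{j=n}^{n+k}\delta(P_j)$, and the hypothesis $\prod_{n}\delta(P_n)=0$ forces the tail products $\prod_{j=n}^{n+k}\delta(P_j)$ to vanish as $k\to\infty$ for each $n$. For condition (2), I would first record that a column stochastic $d\times d$ matrix all of whose entries are at least $\varepsilon'>0$ satisfies $\delta\le 1-d\varepsilon'<1$, since $\delta(Q)=1-\min_{i,j}\sum_k\min(Q_{ki},Q_{kj})$. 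As $P$ is primitive and column stochastic there is some $p$ with $P^p$ strictly positive, with all entries bounded below by some $\varepsilon>0$; because $P_n\to P$ and a product of $p$ factors depends continuously on them, every block $P_{j+1}\cdots P_{j+p}$ of $p$ consecutive matrices is strictly positive with a uniform lower bound once $j$ is large, hence has $\delta\le c$ for a single $c<1$. Grouping $P_n\cdots P_{n+k}$ into such blocks and applying submultiplicativity yields $\delta(P_n\cdots P_{n+k})\le c^{\lfloor k/p\rfloor - O(1)}\to 0$, so the characterisation gives ergodicity. The only routine bookkeeping is aligning the blocks with the starting index $n$ and absorbing the finitely many initial (possibly non-positive) blocks into a bounded factor at most $1$.
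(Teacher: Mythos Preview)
The paper does not prove this theorem; it is quoted from \cite{Chatterjee_Seneta} as a preliminary result, followed only by a remark contrasting inverse-time and forward-time Markov chains. So there is no proof in the paper to compare against.

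Your argument is correct and is essentially the standard one. The forward implication is immediate from continuity of $\delta$. For the reverse implication, your key observation---that each column of $A_{n,m}=A_{n,k}C$ is a convex combination of the columns of $A_{n,k}$, hence lies within $2\delta(A_{n,k})$ of any fixed column of $A_{n,k}$---cleanly upgrades ``columns mutually close'' to ``columns Cauchy'', and the common limit follows. Condition~(2) is handled correctly via continuity of finite products and the identity $\delta(Q)=1-\min_{i,j}\sum_k\min(Q_{ki},Q_{kj})$.

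One small caveat on condition~(1): your claim that $\prod_{n}\delta(P_n)=0$ forces all tail products $\prod_{j\ge n}\delta(P_j)$ to vanish is valid when every $\delta(P_n)>0$ (since then $\sum_n(-\log\delta(P_n))=\infty$, and a divergent series of nonnegative terms has divergent tails), but fails if some single $\delta(P_{n_0})=0$ while later factors stay bounded away from zero. For instance, $P_1=\tfrac{1}{d}\mathds{1}\mathds{1}^T$ and $P_n=I$ for $n\ge 2$ gives $\prod_n\delta(P_n)=0$ yet $\delta(P_2\cdots P_{2+k})=1$ for all $k$. This is really an imprecision in the statement as quoted (the intended reading excludes this degenerate case), not a defect in your method; just make the positivity assumption explicit when you write it up.
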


    For Markov processes in forward time, the natural analogue of our definition of ergodicity is usually called ``strong ergodicity" and in fact strictly stronger than the condition that $\lim_{k \to \infty} \delta(P_n \cdots P_{n+k}) = 0$ (termed ``weak ergodicity"). In this sense, Markov processes in inverse time are more well-behaved than their analogues in forward time.

\subsection{Conditional entropy}

Let $U$ be a random variable, possibly word valued, with a  countable set $\operatorname{Im}(U)$ of possible realisations. Assume that the probability distribution of $U$ is fixed by some probability measure $\mathbb{P}$. The \emph{entropy} of $U$ with respect to $\mathbb{P}$ is given by
\[
\Ent_{\mathbb{P}}( U) =  - \sum_{u \in \operatorname{Im}(U)} \mathbb{P}[U = u] \log (\mathbb{P}[U = u]),
\]
Often, entropy is defined for a partition, but this leads to an equivalent definition if we consider partitions that are induced by countable state random variables.
We write $\Ent(U)$ for $\Ent_{\mathbb{P}}(U)$ if the probability distribution is understood. Given two random variables $U, V$, we write $\Ent(U, V) = \Ent((U,V))$ for the entropy of the random variable $(U, V)$. The \emph{conditional entropy of $U$ given $V$} with respect to $\mathbb{P}$ is defined as
\[
\Ent_{\mathbb{P}}(U | V)
= \sum_{ v \in \operatorname{Im}(V)} \mathbb{P}[ V = v] \Ent_{\mathbb{P}_{\{ V = v\}}}(U).
\]
We will freely use the following standard properties of (conditional) entropy.
\begin{enumerate}
    \item $\Ent(U) \leqslant \log(\# \operatorname{Im}(U))$, equality holds if and only if $U$ is uniformly distributed,
    \item $\Ent(U,V) =\Ent(V) +  \Ent(U|V)$, 
    \item $\Ent(U|V) \leqslant \Ent(U)$, with equality if and only if $U$ and $V$ are independent,
    \item $\Ent(U,V|W) = \Ent(V|W) +\Ent(U|V,W)$,
    \item $\Ent(U|V,W) \leqslant \Ent(U|W)$.
\end{enumerate}
We refer to \cite{Keller,Walters} for more details and background. Let us expand a bit more on how to characterise equality in the last item. By a straightforward calculation,
\[
\Ent_{\mathbb{P}}(U|V,W)
= \sum_{w \in \operatorname{Im}(W)}\mathbb{P}[W = w] \Ent_{\mathbb{P}_{\{W=w \}}}(U|V).
\]
Using the third property, we obtain that $\Ent_{\mathbb{P}}(U|V,W) = \Ent_{\mathbb{P}}(U|W)$ if and only if $U$ and $V$ are independent over $\mathbb{P}_{\{W=w \}}$ for every realisation $w$ with $\mathbb{P}[W = w]>0$.

\section{Main results}
\label{SEC:main-results}

Our first main result shows that the topological entropy of the geometric hull can be obtained by counting inflation words. We emphasise that this does not require $\vartheta$ to be recognisable.
This generalises and unifies the results on topological entropy in \cite{gohlke,mitchell}.

\begin{itheorem}\label{THM:geom-inf-entropy}
    Let $\vartheta$ be a primitive and geometrically compatible random substitution. Then, for all $a \in \mc A$, the geometric inflation word entropy $h_a^G$ exists and coincides with $h_{\operatorname{top}} (Y_{\vartheta})$.
\end{itheorem}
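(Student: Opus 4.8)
The plan is to work entirely with the normalised quantities $z_m(a) := \frac{\log \#\vartheta^m(a)}{\lambda^m \Length_a}$ and to prove that $z_m(a) \to h_{\operatorname{top}}(Y_\vartheta)$ for every $a \in \mc A$; this is exactly the assertion, since geometric compatibility makes the geometric length of a level-$m$ inflation word deterministic, $\Length(\vartheta^m(a)) = \lambda^m \Length_a$ for every realisation. The single structural fact that drives everything is an \emph{exact} multiplicativity: for any word $u = u_1 \cdots u_r \in \mc L_\vartheta$ one has $\#\vartheta^m(u) = \prod_{i=1}^r \#\vartheta^m(u_i)$. This holds \emph{without} recognisability, because in $\vartheta^m(u) = \vartheta^m(u_1) \cdots \vartheta^m(u_r)$ the block boundaries sit at the prescribed geometric positions $\lambda^m(\Length_{u_1} + \cdots + \Length_{u_i})$; as partial geometric lengths along a word increase strictly, each word in $\vartheta^m(u)$ decomposes uniquely into its blocks, so distinct tuples of block realisations yield distinct words.

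First I would establish existence of the limit and its independence of $a$ simultaneously. Writing $\#\vartheta^{m+p}(a) = \# \bigcup_{v \in \vartheta^p(a)} \vartheta^m(v)$ and squeezing this cardinality between $\max_{v} \#\vartheta^m(v)$ and $\#\vartheta^p(a)\cdot \max_v \#\vartheta^m(v)$, exact multiplicativity turns into the recursion
\[
z_{m+p}(a) = \max_{v \in \vartheta^p(a)} \sum_{b \in \mc A} \phi^{\operatorname{g}}(v)_b\, z_m(b) + O(\lambda^{-m}),
\]
in which each inner sum is a \emph{convex combination} of the $z_m(b)$ weighted by the geometric proportions $\phi^{\operatorname{g}}(v)$. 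Choosing $p$ so that $\vartheta^p$ has a primitive marginal forces every letter to occur in every realisation $v \in \vartheta^p(a)$, so all weights $\phi^{\operatorname{g}}(v)_b$ are bounded below by some $\eta > 0$. Setting $x_m = \max_a z_m(a)$ and $y_m = \min_a z_m(a)$, the two-sided weight bound gives $x_{m+p} \leqslant \eta y_m + (1-\eta) x_m + O(\lambda^{-m})$ and $y_{m+p} \geqslant \eta x_m + (1-\eta) y_m$, so the spread contracts, $x_{m+p} - y_{m+p} \leqslant (1-2\eta)(x_m - y_m) + O(\lambda^{-m}) \to 0$. Hence all $z_m(a)$ converge to a common limit $h^G$; in particular every $h_a^G$ exists and they agree. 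I expect this contraction — extracting $a$-independence and convergence at once from the primitive mixing of geometric proportions — to be the main obstacle and the technical heart of the argument.

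It then remains to identify $h^G$ with $h_{\operatorname{top}}(Y_\vartheta)$. For $h^G \leqslant h_{\operatorname{top}}(Y_\vartheta)$ I would simply note that the $\#\vartheta^m(a)$ distinct inflation words are admissible with $\Length(w) = \lambda^m \Length_a$, so each is counted by $p_\vartheta(\lambda^m \Length_a)$; thus $\#\vartheta^m(a) \leqslant p_\vartheta(\lambda^m \Length_a)$ and $z_m(a) \leqslant \tfrac{1}{\lambda^m \Length_a}\log p_\vartheta(\lambda^m \Length_a)$, whose limit is at most $h_{\operatorname{top}}(Y_\vartheta)$. For the reverse inequality I would use a renormalisation estimate: fixing $m$, every patch of geometric length $\ell$ lies inside the level-$m$ inflation of a coarse admissible word of geometric length $\approx \ell/\lambda^m$, together with a bounded offset inside its first block. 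Counting admissible coarse words with weights $\#\vartheta^m(u_i)$ per letter bounds $p_\vartheta(\ell)$ by the number of weighted compositions of length $\ell/\lambda^m$, whose exponential rate $\gamma_m$ solves $\sum_{a} \#\vartheta^m(a)\, \me^{-\gamma \Length_a} = 1$. Letting $\ell \to \infty$ yields $\limsup_\ell \tfrac1\ell \log p_\vartheta(\ell) \leqslant \gamma_m/\lambda^m$ for each $m$, and since the dominant summand is $\me^{z_m(a)\lambda^m \Length_a}$ one has $\gamma_m/\lambda^m \to h^G$ (the largest of the $z_m(a)$ tends to $h^G$), whence $h_{\operatorname{top}}(Y_\vartheta) \leqslant h^G$. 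This bounds the defining $\limsup$ directly, so no separate subadditivity of $\log p_\vartheta$ is needed.
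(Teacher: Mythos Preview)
Your overall strategy is close in spirit to the paper's, but there is a concrete error in the contraction step. You claim that choosing $p$ so that $\vartheta^p$ has a primitive marginal ``forces every letter to occur in every realisation $v \in \vartheta^p(a)$.'' This is false: primitivity only guarantees that \emph{some} realisation of $\vartheta^p(a)$ contains every letter, not all of them. The paper's own example $\vartheta \colon a \mapsto \{abb\},\, b \mapsto \{a, bb\}$ is primitive and geometrically compatible, yet $b^{2^p} \in \vartheta^p(b)$ contains no $a$ for any $p$. Consequently your upper bound $x_{m+p} \leqslant \eta y_m + (1-\eta) x_m + O(\lambda^{-m})$ fails: the $v$ realising the maximum need not put any weight on the letter realising $y_m$. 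The fix is not hard. One always has $x_{m+p} \leqslant x_m + O(\lambda^{-m})$ (any convex combination of the $z_m(b)$ is at most $x_m$), while your lower bound $y_{m+p} \geqslant \eta x_m + (1-\eta) y_m$ survives because it only needs \emph{one} realisation containing every letter. Together these still give $x_{m+p}-y_{m+p} \leqslant (1-\eta)(x_m-y_m) + O(\lambda^{-m})$, so the spread contracts with rate $(1-\eta)$ rather than $(1-2\eta)$. You should also note that the contraction alone yields $x_m - y_m \to 0$ but not convergence of either sequence; for that, observe that already the $p=1$ recursion gives $y_{m+1} \geqslant y_m$ and $x_{m+1} \leqslant x_m + O(\lambda^{-m})$, so boundedness suffices.

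By way of comparison, the paper organises the argument differently. It first proves the renormalisation bound $h_{\operatorname{top}}(Y_\vartheta) \leqslant \tfrac{\lambda^m}{\lambda^m-1} h^{m,k}$ via the self-referential inequality $h_{\operatorname{top}} \leqslant h_{\operatorname{top}}/\lambda^m + h^{m,k}$, obtained by bounding $p_\vartheta(\lambda^m n)$ in terms of $p_\vartheta(n)$; this replaces your weighted-composition generating function by a direct two-scale estimate on $p_\vartheta$ and avoids having to justify that the growth rate is the root of $\sum_a \#\vartheta^m(a)\, \me^{-\gamma \Length_a}=1$. Only afterwards does the paper build specific realisations (through the primitive marginal) to push the $\liminf$ of $z_m(a)$ above $h_{\operatorname{top}}(Y_\vartheta)-\varepsilon$. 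Your contraction argument, once repaired, delivers existence and $a$-independence in one stroke, which is conceptually cleaner; the paper's route is more elementary for the upper bound on $h_{\operatorname{top}}(Y_\vartheta)$.
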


In the symbolic setting, it was shown in \cite{MT-entropy,mitchell-thesis} that for all primitive random substitutions that are compatible or constant length, there exists a sequence of frequency measures that converges weakly to a measure of maximal entropy.
As a consequence of \Cref{THM:geom-inf-entropy}, we will obtain the analogous result in the geometrically compatible setting, on the geometric hull.

\begin{icorollary}
\label{COR:frequency-measure-approximation}
    Let $\vartheta$ be a geometrically compatible random substitution with associated geometric hull $Y_{\vartheta}$.
    Then, there exists a sequence $(\mu_m)_m$ of frequency measures (on the symbolic hull $X_{\vartheta}$) whose push-forwards converge weakly to a measure of maximal entropy on $Y_{\vartheta}$.
\end{icorollary}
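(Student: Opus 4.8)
The plan is to exhibit an explicit sequence of frequency measures whose geometric entropies converge to $h_{\operatorname{top}}(Y_\vartheta)$, and then to pass to a weak-$*$ accumulation point, which will be a measure of maximal geometric entropy by an upper semicontinuity argument. For each $m \in \N$, regard $\vartheta^m$ as a random substitution in its own right and equip it with the uniform probability choice $\mathbf U_m$, given by $(\mathbf U_m)_{v,a} = 1/\#\vartheta^m(a)$ for $v \in \vartheta^m(a)$; let $\mu_m$ be the frequency measure of $\vartheta^m_{\mathbf U_m}$. Since $X_{\vartheta^m} = X_\vartheta$, each $\mu_m$ is a shift-invariant (indeed ergodic) frequency measure on $X_\vartheta$, with lift $\widetilde{\mu_m}$ to $Y_\vartheta$. (One could equivalently work with the level-$1$ choices $\mathbf P^{m,1}$ of \Cref{DEF:n-productivity}, which are genuine probability choices for $\vartheta$ itself and are designed to approximate uniformity at scale $m$.)

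For the upper bound, the variational principle for the suspension flow $(Y_\vartheta, T)$ gives $\hg_{\mu_m} \leqslant h_{\operatorname{top}}(Y_\vartheta)$ for every $m$. The substance of the proof is the matching lower bound, for which I would invoke the measure-theoretic inflation word entropy formula for frequency measures developed alongside \Cref{THM:geom-inf-entropy} (generalising \cite{gohlke,mitchell}), applied to the base substitution $\vartheta^m$ with the choice $\mathbf U_m$. Because all realisations of a level-$m$ inflation word of type $a$ share the common geometric length $\Length(\vartheta^m(a)) = \lambda^m \Length_a$, and because uniformity maximises Shannon entropy so that $\Ent(\vartheta^m_{\mathbf U_m}(a)) = \log \#\vartheta^m(a)$, the leading contribution to $\hg_{\mu_m}$ is the geometric inflation word entropy term
\[
\frac{\Ent(\vartheta^m_{\mathbf U_m}(a))}{\Length(\vartheta^m(a))} = \frac{\log \#\vartheta^m(a)}{\lambda^m \Length_a},
\]
which tends to $h_a^G = h_{\operatorname{top}}(Y_\vartheta)$ for each $a$, hence uniformly since $\mc A$ is finite, by \Cref{THM:geom-inf-entropy}. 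Establishing that this leading term is genuinely a lower bound for $\hg_{\mu_m}$ up to a vanishing correction is the heart of the matter, and I expect it to be the main obstacle: the naive estimate that treats consecutive level-$m$ inflation words as independent only produces an upper bound (by subadditivity of entropy), so one must control the mutual information between adjacent inflation words induced by the hierarchical structure. The key point is that this shared-ancestry information is negligible relative to $\log \#\vartheta^m(a)$, so that the per-length correction vanishes as $m \to \infty$; \Cref{THM:geom-inf-entropy} is precisely what certifies this, since it forces $\lambda^{-m}\Length_a^{-1}\log\#\vartheta^m(a) \to h_{\operatorname{top}}(Y_\vartheta)$. Together with the upper bound this yields $\hg_{\mu_m} \to h_{\operatorname{top}}(Y_\vartheta)$.

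Finally, I would extract the limiting measure. The set of $S$-invariant probability measures on $X_\vartheta$ is weak-$*$ compact, so some subsequence of $(\mu_m)$ converges weakly to an $S$-invariant measure $\mu$; since the lift $\nu \mapsto \widetilde\nu$ is weak-$*$ continuous (the integral of a fixed continuous function against $\widetilde\nu$ depends continuously on $\nu$, as $\pi(x) = \Length_{x_0}$ is locally constant and the normalisation $\nu(\pi) = \Length R^\nu$ is continuous in $\nu$), the push-forwards $\widetilde{\mu_m}$ converge weakly to $\widetilde\mu$ along this subsequence. The map $\nu \mapsto \hg_\nu = h_\nu(X_\vartheta)/\Length R^\nu$ is upper semicontinuous: the denominator $\Length R^\nu = \sum_{a \in \mc A}\Length_a\, \nu([a])$ is continuous because cylinder indicators are continuous on $X_\vartheta$, while $\nu \mapsto h_\nu(X_\vartheta)$ is upper semicontinuous as $X_\vartheta$ is expansive. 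Hence $\hg_\mu \geqslant \limsup_m \hg_{\mu_m} = h_{\operatorname{top}}(Y_\vartheta)$, so $\widetilde\mu$ is a measure of maximal geometric entropy on $Y_\vartheta$. Relabelling the subsequence gives the desired sequence of frequency measures.
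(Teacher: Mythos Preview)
Your proposal follows essentially the same route as the paper: take $\mu_m$ to be the frequency measure for $\vartheta^m$ with the uniform choice, show $\hg_{\mu_m}\to h_{\operatorname{top}}(Y_\vartheta)$ via \Cref{THM:geom-inf-entropy}, and pass to a weak-$*$ accumulation point using upper semicontinuity of entropy. The paper's argument is the same in outline, and you are in fact more explicit than the paper about the upper semicontinuity step at the end.

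The one place where your write-up is looser than the paper is the lower bound on $\hg_{\mu_m}$. The paper does not develop this from scratch but simply quotes \cite[Thm.~3.5]{MT-entropy} (stated here as \Cref{THM:MT-entropy}), which gives directly
\[
h_{\mu_m}(X_\vartheta)\;\geqslant\;\frac{1}{\lambda^m}\sum_{a\in\mc A} R_{m,a}\,\log\#\vartheta^m(a),
\]
after which Abramov's formula and \Cref{THM:geom-inf-entropy} finish the job in two lines. Your worry about ``mutual information between adjacent inflation words'' is misplaced for this particular step: by the very construction of frequency measures the level-$m$ inflation words \emph{are} conditionally independent given their types, so the inflation-word entropy term really is a clean lower bound, not an upper bound --- that is exactly what \Cref{THM:MT-entropy} records. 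Once you invoke that result, the obstacle you flag disappears.
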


In general however, the class of frequency measures is too small to contain the measure of maximal (geometric) entropy; an example with a unique MME that is not a frequency measure is presented in \Cref{Exa:IE-MME-not-freq}.
A more adequate family is given by the \emph{inverse limit measures}, presented in \Cref{SEC:inverse-limit-measures}. In particular, this class contains all uniformity measures.

\begin{itheorem}
\label{THM:main_uniformity_maximal}
     Let $\vartheta$ be a primitive, geometrically compatible and recognisable random substitution. Then, the measures of maximal geometric entropy on $(X_{\vartheta},S)$ are precisely the uniformity measures.
\end{itheorem}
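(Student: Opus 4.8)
The plan is to prove the two inclusions separately. For the inclusion that every measure of maximal geometric entropy is a uniformity measure, the key observation is that geometric compatibility forces $\Length(v) = \lambda^n \Length_a$ for every $v \in \vartheta^n(a)$, so that each shuffle $f_\alpha \in \Gamma$ permutes inflation words of \emph{equal} geometric length and hence preserves the roof function. Consequently $f_\alpha$ acts as a length-preserving exchange of a finite family of mutually substitutable words on the Delone-set realisation $\mc D(Y_\vartheta)$. Measures of maximal entropy are known to be invariant under such equal-length exchangeable-word symmetries \cite{Garcia-Ramos_Pavlov, Meyerovitch}; the underlying mechanism is that a measure distinguishing two exchangeable equal-length words in some common context could be strictly improved in entropy by locally symmetrising, contradicting maximality via strict concavity of $t \mapsto -t\log t$. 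Applying this to every $f_\alpha$ and every level $n$ gives $\widetilde{\mu} \circ f = \widetilde{\mu}$ for all $f \in \Gamma$, that is, $\mu$ is a uniformity measure. The step that needs care here is transporting the cited symbolic result to the suspension setting and verifying that the shuffles are exactly the relevant equal-length exchanges.

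For the reverse inclusion, that every uniformity measure $\mu$ attains $\hg_\mu = h_{\operatorname{top}}(Y_\vartheta)$, the upper bound $\hg_\mu \leqslant h_{\operatorname{top}}(Y_\vartheta)$ is immediate from the variational principle for $(Y_\vartheta,T)$, so the work is the matching lower bound. First I would exploit recognisability of $\vartheta^n$ to induce on the set $A_n \subset X_\vartheta$ of sequences whose origin starts a level-$n$ inflation word. The induced system $(A_n, S_{A_n}, \mu_{A_n})$ factors, via the level-$n$ desubstitution $\desub^n$, onto the $n$-fold desubstituted system, with the fibres recording the choice of level-$n$ inflation word at each site. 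Combining the Abramov and Kac formulae recalled in the preliminaries then expresses $h_\mu(X_\vartheta)$, and hence $\hg_\mu = h_\mu/\Length R^\mu$, in terms of the entropy of the desubstituted base plus the conditional entropy of the inflation-word choices.

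The crucial input is that, for a uniformity measure, invariance under $\Gamma_n$ forces the level-$n$ inflation words of type $a$ to be conditionally equidistributed (and independent across sites given the desubstituted base) over the $\#\vartheta^n(a)$ realisations, so that this conditional entropy equals $\sum_{a} \mu^{(n)}([a]) \log \#\vartheta^n(a)$, where $\mu^{(n)}$ denotes the $n$-fold desubstitution. Since the base entropy stays bounded by $h_{\operatorname{top}}(X_\vartheta)$ while this term grows like $\lambda^n$, dividing by the geometric normalisation $\Length R^\mu$ and using $\Length(\vartheta^n(a)) = \lambda^n \Length_a$ yields
\[
\hg_\mu \;\geqslant\; \lim_{n \to \infty} \frac{1}{\Length(\vartheta^n(a))}\log \#\vartheta^n(a) \;=\; h_a^G \;=\; h_{\operatorname{top}}(Y_\vartheta),
\]
the last two equalities by \Cref{THM:geom-inf-entropy}. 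Morally this is a Shannon--McMillan--Breiman lower bound: a measure spreading equal mass over exponentially many distinguishable windows has entropy rate at least their exponential growth rate. Together with the variational upper bound this gives $\hg_\mu = h_{\operatorname{top}}(Y_\vartheta)$, completing the inclusion.

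I expect the main obstacle to be establishing, rigorously and at every level $n$ simultaneously, that uniformity (invariance under the full shuffle group) translates into the conditional equidistribution and independence of inflation words needed to evaluate the conditional entropy, together with controlling the interplay between the desubstitution tower and the non-constant roof function so that the displayed limit is legitimate. This is precisely where an inverse-limit description of uniformity measures under the transfer operators representing $\vartheta$ would streamline the bookkeeping, and I would develop that description first.
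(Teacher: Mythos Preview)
Your overall architecture matches the paper's, and your final paragraph correctly identifies the inverse-limit machinery as the right tool. But there is a genuine gap in the second inclusion as you state it. You write that ``invariance under $\Gamma_n$ forces the level-$n$ inflation words of type $a$ to be conditionally equidistributed \emph{and independent across sites} given the desubstituted base.'' This is not correct: each $f_\alpha \in \Gamma_{n,a}$ applies the \emph{same} permutation $\alpha$ simultaneously at every site of type $a$, so $\Gamma_n$-invariance is invariance under a diagonal action of $\operatorname{Sym}(\vartheta^n(a))$. That gives uniform marginals at each single site (which is the content of \Cref{PROP:uniformity-measures-distributions}), but it does \emph{not} rule out correlations between sites; for instance, a measure concentrating on configurations where all type-$a$ sites carry the same inflation word is $\Gamma_n$-invariant. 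Without independence you cannot evaluate the conditional entropy as $\sum_a \mu^{(n)}([a]) \log \#\vartheta^n(a)$, and the displayed lower bound collapses. The paper does not extract independence from $\Gamma_n$-invariance at a fixed $n$; instead it passes through the intermediate notion of a \emph{uniformity limit measure}, proving that every uniformity measure $\nu$ satisfies $\nu = \trans_{\mathbf{P}^{0,n}}(\Pi^n \nu)$ for all $n$ (using the full hierarchy of levels via \Cref{LEM:productivity-splitting} and \Cref{LEM:convergence-along-inflation-words}), and the transfer operator $\trans_{\mathbf{P}}$ has the product structure built into its definition \eqref{EQ:substitution-on-measure}. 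So the inverse-limit description is not merely a bookkeeping convenience: it is where independence actually comes from.

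For the first inclusion your route via \cite{Garcia-Ramos_Pavlov,Meyerovitch} is genuinely different from the paper's. The paper argues internally: if $\nu$ has maximal geometric entropy then in particular it maximises geometric entropy within each fibre $\Pi^{-n}(\Pi^n \nu)$, and \Cref{COR:entropy-maximising-uniform} identifies the unique maximiser there as $\trans_{\mathbf{P}^{0,n}}(\Pi^n \nu)$, which lies in $\mc M[\mathbf{P}^{0,n}]$; hence $\nu$ is a uniformity measure by \Cref{PROP:uniformity-measures-distributions}. Your citation-based argument is conceptually attractive and, because geometric compatibility makes every shuffle an equal-length word exchange on $Y_\vartheta$, it ought to go through; but the cited results are formulated for subshifts, so you would need either to verify that they transfer to suspension flows (or to the Delone picture $\mc D(Y_\vartheta)$), or to redo their strict-concavity argument in that setting. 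The paper's internal route avoids this transport entirely and, as a bonus, yields the exact entropy formula in \Cref{LEM:entropy-maximising-inverse-branch} that drives the second inclusion.
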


Since uniformity measures have full topological support, we conclude that $X_\vartheta$ is (geometric) entropy--minimal, that is, all proper invariant subshifts have a smaller (geometric) entropy.

\begin{itheorem}
    \label{THM:main_Markov_chain_condition}
    Let $\vartheta$ be a primitive, geometrically compatible and recognisable random substitution and let $Q_n = Q(\mathbf{P}^{n,1})$, where $\mathbf{P}^{n,1}$ is the $n$-productivity distribution for $\vartheta$ for all $n \in \N_0$.
    Then, there is a unique uniformity measure if and only if the Markov chain $(Q_n)_{n \in \N_0}$ is ergodic in inverse time.
\end{itheorem}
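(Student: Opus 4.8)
My plan is to establish a bijective correspondence between uniformity measures and certain sequences of compatible probability vectors, and then to recognise uniqueness of such sequences as precisely the inverse-time ergodicity condition. By \Cref{THM:main_uniformity_maximal}, uniformity measures exist and are exactly the measures of maximal geometric entropy, so the statement reduces to counting them. The key structural fact I would invoke is that a uniformity measure, being invariant under every $\vartheta^n$-shuffle, must equidistribute the inflation words in $\vartheta^n(a)$ for all levels $n$ and all types $a \in \mc A$. Following the discussion in the introduction, I expect uniformity measures to carry an inverse-limit structure under the transfer operators representing $\vartheta$, and I would phrase a uniformity measure as determined by its induced system on the level-$0$ tiles together with the data of how mass is distributed across levels.

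First I would pin down the finite-dimensional data that a uniformity measure is free to choose. The shuffle-invariance forces a uniform conditional distribution on $\vartheta^n(a)$ given the type $a$ at each level, so the only remaining freedom is the ``type distribution'' across levels, encoded by a probability vector on $\mc A$ at each level $n$. The geometric weighting means the correct quantity to track is the interval proportion vector $\pi^\mu$ from \eqref{EQ:interval-proportion-vector}. The self-consistency between level $n$ and level $n+1$ --- namely that equidistributing at level $n+1$ and then decomposing one step down must reproduce the level-$n$ distribution --- is exactly the relation $\pi^{(n)} = Q_n\, \pi^{(n+1)}$, where $Q_n = Q(\mathbf{P}^{n,1})$ is the $n$-productivity geometric substitution matrix. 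This is the place where the $n$-productivity distribution $\mathbf{P}^{n,1}$ enters: weighting inflation words by $\#\vartheta^n(v)/\#\vartheta^{n+1}(a)$ is precisely what records ``prepare for uniformity $n$ levels up,'' so that the uniformity constraint at all levels translates into the backward compatibility equations driven by the matrices $Q_n$.

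With this dictionary in place, a uniformity measure corresponds to a sequence $(\pi^{(n)})_{n \in \N_0}$ of probability vectors on $\mc A$ satisfying $\pi^{(n)} = Q_n \pi^{(n+1)}$ for all $n$. Conversely, any such compatible sequence should reconstruct a unique uniformity measure via the inverse-limit / transfer-operator construction, using Kac's formula and the induced-transformation machinery of the Preliminaries to assemble the measure from its restrictions to the recognisable tiles. Granting this bijection, the problem becomes: for which $(Q_n)$ is there a \emph{unique} such backward-compatible sequence? A backward-compatible sequence is exactly a point in the inverse limit of the system $\pi^{(0)} \xleftarrow{Q_0} \pi^{(1)} \xleftarrow{Q_1} \cdots$, and uniqueness of $\pi^{(n)}$ for each $n$ amounts to the range of $Q_n \cdots Q_{n+k}$ collapsing to a single vector as $k \to \infty$. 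Since each $Q_m$ is column stochastic, $Q_n \cdots Q_{n+k}$ tends to a rank-one matrix $\pi^n \mathds{1}^T$ precisely when $(Q_n)$ is ergodic in inverse time, which by \Cref{THM:ergodicity-characterisation_and_conditions} is equivalent to $\delta(Q_n \cdots Q_{n+k}) \to 0$; this yields both directions.

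The main obstacle I anticipate is rigorously establishing the bijection between uniformity measures and backward-compatible probability-vector sequences --- in particular, verifying that the finite-level combinatorial equidistribution data genuinely assemble into a well-defined shift-invariant measure (convergence of the inverse limit and consistency of the transfer operators) and that distinct sequences $(\pi^{(n)})$ yield distinct measures. The forward direction (a uniformity measure induces a compatible sequence) is essentially bookkeeping with conditional entropy and the shuffle action, but the reconstruction direction requires care: one must show that the inverse-limit data determine the measure on \emph{all} cylinders, not just on the inflation-word partition, and that shuffle-invariance is preserved in the limit. Once the correspondence is an honest bijection, identifying uniqueness with inverse-time ergodicity is the comparatively clean step, as it is a direct reading of the contraction of column-stochastic products in variation distance.
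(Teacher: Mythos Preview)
Your proposal is correct and takes essentially the same approach as the paper: uniformity measures are shown to be precisely the $(\vartheta,\mc P)$ inverse limit measures for $\mc P = (\mathbf{P}^{n,1})_{n\in\N_0}$ (\Cref{THM:max-geometric-entropy-is-uniform}), and uniqueness is then reduced to inverse-time ergodicity of $(Q_n)$ via exactly the interval-proportion-vector bookkeeping you outline (\Cref{COR:interval-proportion-transformation} and \Cref{PROP:unique-inverse-limit-ergodic}). The reconstruction step you correctly flag as the main obstacle is handled in the paper by \Cref{LEM:P-accumulations-give-measures}, which builds the required measures from accumulation points of $Q_{[m,n]}e_a$ via limits of periodic measure representations, rather than by establishing a full bijection with all backward-compatible sequences---slightly less than your bijection claim, but all that is needed for the theorem.
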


If $\vartheta$ is compatible or of constant length, then the measures of maximal geometric entropy are precisely the measures of maximal entropy.
We note that several isolated examples were shown to be intrinsically ergodic in \cite{MT-entropy}.
In all of these cases, the $n$-productivity distributions are uniform distributions and the Markov chain is trivially ergodic.
In fact, this is true whenever $\vartheta$ is compatible.

\begin{icorollary}
\label{COR:main_rpc}
If $\vartheta$ is primitive, compatible and recognisable, both $(X_{\vartheta},S)$ and $(Y_\vartheta,T)$ are intrinsically ergodic. The unique measure of maximal entropy is the frequency measure $\mu_\mathbf{P}$ where $\mathbf{P}$ is the uniform distribution on $\vartheta(a)$ for all $a \in \mc A$.
\end{icorollary}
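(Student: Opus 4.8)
The plan is to derive this as a direct consequence of \Cref{THM:main_Markov_chain_condition} together with the special structure that compatibility forces on the $n$-productivity distributions. The key observation is that when $\vartheta$ is compatible, all marginals share a common substitution matrix, and this rigidity should propagate through the combinatorial data so that the productivity matrices $Q_n = Q(\mathbf{P}^{n,1})$ become trivial in a way that makes the Markov chain $(Q_n)_{n \in \N_0}$ ergodic in inverse time for essentially formal reasons.

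First I would establish that under compatibility the geometric substitution matrix $Q(\mathbf{P})$ is in fact independent of the probability choice $\mathbf{P}$. Indeed, compatibility means every marginal has the same substitution matrix, so $M(\mathbf{P}) = \mathbb{E}[|\vartheta_{\mathbf{P}}(b)|_a]$ does not depend on $\mathbf{P}$; since $Q_{ab} = \Length_a M_{ab}/(\lambda \Length_b)$, the matrix $Q(\mathbf{P})$ is likewise constant in $\mathbf{P}$. In particular $Q_n = Q(\mathbf{P}^{n,1}) = Q$ for all $n$, where $Q$ is this single fixed geometric substitution matrix. By primitivity of $\vartheta$ (and \Cref{THM:ergodicity-characterisation_and_conditions}, condition (2), or condition (1) since $Q$ is primitive so $\delta(Q^k) \to 0$), the constant sequence $(Q)_{n}$ is ergodic in inverse time, and \Cref{THM:main_Markov_chain_condition} then yields a unique uniformity measure, which by \Cref{THM:main_uniformity_maximal} is the unique measure of maximal geometric entropy on $(X_\vartheta, S)$.

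Next I would upgrade geometric intrinsic ergodicity to genuine intrinsic ergodicity of both systems. The excerpt notes that for compatible (or constant length) $\vartheta$ the measures of maximal geometric entropy coincide with the measures of maximal entropy, because the letter-frequency vector $R^\mu$ is the same for every invariant measure (compatibility forces a common right PF eigenvector, hence a common normalised letter-frequency vector $R$), so the normalisation $\mu(\pi) = \Length R$ is uniform. Consequently the lift $\mu \mapsto \widetilde{\mu}$ carries measures of maximal entropy on $X_\vartheta$ bijectively to measures of maximal entropy on $Y_\vartheta$, and uniqueness transfers between the two systems. This gives intrinsic ergodicity of both $(X_\vartheta, S)$ and $(Y_\vartheta, T)$.

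Finally I would identify the unique measure explicitly as the frequency measure $\mu_{\mathbf{P}}$ for the uniform probability choice. The natural approach is to verify that the frequency measure associated to the uniform distribution on each $\vartheta(a)$ is a uniformity measure, since equidistributing the inflation words at each level is exactly what the shuffle-group invariance demands; invariance under $\Gamma$ should follow because a uniform choice is preserved under any permutation $\alpha \in \operatorname{Sym}(\vartheta^n(a))$. By the uniqueness already established, this frequency measure must be \emph{the} uniformity measure. \textbf{The main obstacle} I anticipate is this last identification: checking that the uniform frequency measure is genuinely $\Gamma$-invariant requires unwinding the inverse-limit description of uniformity measures from \Cref{SEC:inverse-limit-measures} and confirming that the uniform $n$-productivity distribution is the correct probability vector on inflation words at every level — a consistency check that, while conceptually clear, is where the combinatorics of compatibility must be used carefully rather than invoked formally.
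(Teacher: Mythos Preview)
Your proposal is correct and follows essentially the same route as the paper for the first two steps: you exploit that compatibility makes $Q(\mathbf{P})$ independent of $\mathbf{P}$, so the Markov chain $(Q_n)_n$ is constant and primitive, hence ergodic; and you use uniformity of letter frequencies in the compatible case to pass between intrinsic ergodicity of $X_\vartheta$ and $Y_\vartheta$. This matches the paper's use of \Cref{COR:intrinsically_ergodic_conditions}.

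Where you diverge is in the identification of the measure, and here the paper's argument dissolves the obstacle you flag. You propose to verify directly that the uniform frequency measure $\mu_{\mathbf{P}}$ is $\Gamma$-invariant. The paper instead observes that compatibility forces all $v \in \vartheta(a)$ to share the same Abelianisation, hence $\#\vartheta^n(v)$ is the same for every $v \in \vartheta(a)$, and therefore the $n$-productivity distribution $\mathbf{P}^{n,1}$ is the \emph{uniform} distribution for every $n$. The uniformity sequence $\mc P = (\mathbf{P}^{n,1})_{n \in \N_0}$ is thus the constant sequence $\mathbf{P}$, and \Cref{REM:frequency_measures-as-inverse-limits} (or \Cref{LEM:frequency-fixed-measure}) gives immediately that the unique $(\vartheta,\mc P)$ inverse limit measure is the frequency measure $\mu_{\mathbf{P}}$. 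No unwinding of $\Gamma$-invariance is needed: the combinatorial consistency you worry about is automatic once you notice that compatibility makes every productivity distribution uniform. Your route would also succeed, but the paper's shortcut is the cleaner way through.
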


In general, this is not true if compatible is relaxed to geometrically compatible. Even in the constant length setting, we can find examples where $(Q_n)_{n \in \N_0}$ is not ergodic, and therefore obtain cases where both $(X_{\vartheta},S)$ and $(Y_\vartheta,T)$ are not intrinsically ergodic. A specific example for which this occurs is worked out at the end of \Cref{SEC:uniformity_intrinsic_ergodicity}.

\section{Topological entropy of the geometric hull}
\label{SEC:Top-entropy}

\subsection{Geometric inflation word entropy}

The proof of \Cref{THM:geom-inf-entropy} follows a similar line of arguments to those in \cite[Thm.~4.1]{mitchell}, adapted to the geometric setting.

\begin{prop}
\label{PROP:h-top-upper-bound}
    Let $\vartheta$ be a geometrically compatible random substitution with associated geometric hull $Y_{\vartheta}$.
    Let $k, m \in \N$ and set
    \begin{align*}
        h^{m,k}_{\max} \coloneqq \max_{a \in \mc A} \max_{u \in \vartheta^k (a)} \frac{\log (\# \vartheta^m (u))}{\Length (\vartheta^m (u))}\tp
    \end{align*}
    Then, the following inequality holds:
    \begin{align*}
        h_{\operatorname{top}}(Y_\vartheta) \leq
        \frac{\lambda^m}{\lambda^m - 1} h^{m,k}_{\max}\tp
    \end{align*}
\end{prop}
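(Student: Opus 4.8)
The plan is to bound the exponential growth rate of the patch counting function by setting up a self-referential recursion whose solution produces the factor $\frac{\lambda^m}{\lambda^m-1}$ as a geometric series. I would write $N(\ell) = \#\{w \in \mc L_\vartheta : \Length(w) \le \ell\}$ and note that $\beta := \limsup_{\ell \to \infty}\frac{1}{\ell}\log N(\ell)$ coincides with $h_{\operatorname{top}}(Y_\vartheta) = \limsup_{\ell\to\infty}\frac1\ell\log p_\vartheta(\ell)$, since $N$ and $p_\vartheta$ differ only by polynomial factors. Finite local complexity gives $N(\ell) \le (\#\mc A)^{\ell/\Length_{\min}+1}$, so $\beta < \infty$, which is what lets us rearrange the recursion at the end.

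The geometric heart of the argument is a covering claim. Since every $w \in \mc L_\vartheta$ is a subword of some $\vartheta^N(b)$ with $N \ge m+k$ (using primitivity), and $\vartheta^N(b) = \vartheta^m(\vartheta^{N-m}(b))$ with $\vartheta^{N-m}(b) = \vartheta^k(\vartheta^{N-m-k}(b))$, every realisation of $\vartheta^N(b)$ has the form $\vartheta^m(u')$ where $u'$ is a concatenation of level-$k$ inflation words. Taking $u$ to be the minimal sub-block of $u'$ (as such a concatenation) whose $\vartheta^m$-image covers the window occupied by $w$, I obtain that every $w$ with $\Length(w) \le \ell$ is a subword of $\vartheta^m(u)$, where $u = p_1 \cdots p_r$ is a concatenation of level-$k$ inflation words with $\Length(u) \le \ell/\lambda^m + C$. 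Here $C$ is a boundary constant independent of $\ell$, coming from the at most one protruding level-$k$ tile on each side, whose $\vartheta^m$-image has geometric length at most $\lambda^{m+k}\Length_{\max}$, since geometric length scales by $\lambda^m$ under $\vartheta^m$.

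For such a parent $u$, unique realisation paths (a consequence of geometric compatibility) give $\#\vartheta^m(u) = \prod_{i=1}^r \#\vartheta^m(p_i)$, and since each $p_i$ is a level-$k$ inflation word, the definition of $h^{m,k}$ yields $\log\#\vartheta^m(p_i) \le h^{m,k}\Length(\vartheta^m(p_i))$. Summing and using $\sum_i \Length(\vartheta^m(p_i)) = \Length(\vartheta^m(u)) = \lambda^m\Length(u)$ gives $\#\vartheta^m(u) \le \exp(h^{m,k}\lambda^m\Length(u))$. Each $w$ is determined by its parent $u$, a realisation in $\vartheta^m(u)$, and an offset/length pair selecting the subword; the number of parents of geometric length $\le \ell' := \ell/\lambda^m + C$ is at most $\mathrm{poly}(\ell)\,N(\ell')$, and the offset/length choices number at most $\mathrm{poly}(\ell)$. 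Collecting these bounds yields
\[
N(\ell) \le \mathrm{poly}(\ell)\cdot N(\ell')\cdot \exp\bigl(h^{m,k}\lambda^m\ell'\bigr), \qquad \ell' = \ell/\lambda^m + C.
\]

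Finally, I would take a sequence $\ell_i \to \infty$ realising the limsup $\beta$ and divide the logarithm of the recursion by $\ell_i$. The polynomial factors contribute $o(1)$; the term $\frac{1}{\ell_i}h^{m,k}\lambda^m\ell_i' \to h^{m,k}$; and, since $\ell_i' \sim \ell_i/\lambda^m \to \infty$, one has $\limsup_i \frac{1}{\ell_i}\log N(\ell_i') \le \lambda^{-m}\beta$. This gives $\beta \le \lambda^{-m}\beta + h^{m,k}$, and solving (using $\beta < \infty$) produces $\beta \le \frac{\lambda^m}{\lambda^m-1}h^{m,k}$, as claimed. The main obstacle is the geometric bookkeeping in the covering claim: one must ensure the parent can be chosen as an honest concatenation of level-$k$ inflation words, so that $h^{m,k}$ rather than the coarser $h^{m,0}$ governs the count, while keeping the boundary overhead bounded independently of $\ell$. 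The subsequent limsup manipulation is where the precise constant $\frac{\lambda^m}{\lambda^m-1}$ is extracted.
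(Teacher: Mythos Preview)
Your proposal is correct and follows essentially the same approach as the paper: both set up a recursion comparing counts at scale $\ell$ to counts at scale $\ell/\lambda^m$ via the covering of legal words by $\vartheta^m$-images of concatenations of level-$k$ inflation words, bound $\#\vartheta^m(u)$ using unique realisation paths and the definition of $h^{m,k}$, and then take the limsup to obtain $\beta \le \lambda^{-m}\beta + h^{m,k}$. The only cosmetic differences are that you work with the cumulative count $N(\ell)$ rather than the patch counting function $p_\vartheta(\ell)$ (equivalent up to polynomial factors, as you note), and that you handle the limsup step via an explicit realising subsequence rather than directly.
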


\begin{proof}
    Fix $k,m \in \N$ and let $n \in \N$.
    For every legal word $w \in \mc L_{\vartheta}$, we have that $\Length (\vartheta^m (w)) = \lambda^m \Length (w)$.
    For each $a \in \mc A$ and $w \in \vartheta^k(a)$, we let $h_{w}^{m}$ be the number such that
    $\# \vartheta^m (w) = \exp ( h_{w}^{m} \lambda^m \Length (w))$. 
    By definition, we have that $h^{m,k}_{\max} = \max_{a \in \mc A} \max_{w \in \mc \vartheta^k (a)} h_{w}^{m}$.
    Note that if $v = v_1 \cdots v_r$ is the concatenation of level-$k$ inflation words (that is, $v_i \in \vartheta^k (a_i)$ for some $a_i \in \mc A$, for all $i \in \{1,\ldots,r\}$), then
    \begin{align}\label{EQ:level-k-entropy-contribution}
        \# \vartheta^m (v) 
        = \prod_{i=1}^{r} \# \vartheta^m (v_i) 
        = \prod_{i=1}^{r} \exp \left(h_{v_i}^{m} \lambda^m \Length (v_i)\right) 
        = \exp \left(\sum_{i=1}^{r} h_{v_i}^{m} \lambda^m \Length (v_i)\right)
        \leq \exp \left(h^{m,k}_{\max} \lambda^m \Length (v)\right)
        \tp
    \end{align}
    By definition of $Y_{\vartheta}$, every patch of length $\lambda^m n$ is contained in the image of a patch with length $n + L_{\max}$, where $L_{\max} = \max_{a \in \mc A} L_a$.
    Moreover, the image of such a patch contains at most $C_1$ patches of length $\lambda^m n$, where $C_1$ is a constant dependent on $m$ but not $n$. This is because patches have a control point at the origin by definition, and these control points occur with bounded distances.
    Hence, the number of patches of length $\lambda^m n$ is bounded above by
    \begin{align}\label{EQ:patch-counting-UB}
        \# \mc P_{\vartheta} (\lambda^m n) \leq C_1 \sum_{\substack{v \in \mc L_{\vartheta}\\L(v_{[1,|v|-1]}) < n+L_{\max} \leqslant L(v)}} \# \vartheta^m (v) \tp
    \end{align}
    For every $v \in \mc L_{\vartheta}$, there exists a $w \in \mc L_{\vartheta}$ that is the concatenation of level-$k$ inflation words such that $v$ is contained in $w$.
    Moreover, such a $w$ can be chosen with length at most $\lvert v \rvert + 2 \lvert \vartheta^k \rvert$, where $\lvert \vartheta^k \rvert = \max_{a \in \mc A} \max_{s \in \vartheta^k (a)} \lvert s \rvert$.
    Therefore, the geometric length $L (w)$ is at most $n + (2 \lvert \vartheta^k \rvert +1) L_{\text{max}}$.
    Thus, it follows by \eqref{EQ:level-k-entropy-contribution} that there is a constant $C_2$ such that
    \begin{align*}
        \# \vartheta^m (v) \leq \# \vartheta^m (w) \leq \exp \left( h^{m,k}_{\max} \lambda^m \Length (w) \right) \leq \exp \left( h^{m,k}_{\max} \lambda^m (n + C_2) \right)
    \end{align*}
    Substituting this expression into \eqref{EQ:patch-counting-UB} gives
    \begin{align*}
        \# \mc P_{\vartheta} (\lambda^m n) \leq C_1 (\# \mc P_{\vartheta} (n+L_{\max})) \exp \left( h^{m,k}_{\max} \lambda^m (n + C_2) \right)
    \end{align*}
    and so it follows that
    \begin{align*}
        \limsup_{n \rightarrow \infty} \frac{1}{\lambda^m n} \log (\# \mc P_{\vartheta} (\lambda^m n)) \leq \limsup_{n \rightarrow \infty} \frac{1}{\lambda^m n} \log (\# \mc P_{\vartheta} (n+L_{\max})) + \limsup_{n \rightarrow \infty} \left( 1 + \frac{C_2}{n} \right) h^{m,k}_{\max}\tp
    \end{align*}
    Hence, we obtain
    \begin{align*}
        h_{\operatorname{top}} (Y_{\vartheta}) \leq \frac{1}{\lambda^m} h_{\operatorname{top}} (Y_{\vartheta}) + h^{m,k}_{\max}\tc
    \end{align*}
    and rearranging then gives the desired result.
\end{proof}

\begin{proof}[Proof of \Cref{THM:geom-inf-entropy}]
For $n,k \in \N$ and $a \in \mc A$, let 
\[
h^{n,k}_a = \max_{u \in \vartheta^k(a)} \frac{\log (\# \vartheta^n(u))}{\Length(\vartheta^n(u))}
\]
and let $u^{n,k}_a \in \vartheta^k(a)$ be a word for which this maximum is achieved. We let $h^{n,k}_{\max} = \max_{a \in \mc A} h^{n,k}_a$ and $h^{n,k}_{\min} = \min_{a \in \mc A} h^{n,k}_a$.
Since we assumed $\vartheta$ to be primitive, there is a number $N \in \N$ such that $\vartheta^N$ has a marginal with strictly positive substitution matrix. Given $a \in \mc A$, we can hence choose a word $w = w_1 \cdots w_m \in \vartheta^N(a)$ that contains every letter in $\mc A$.
Assuming $k > N$, we note that $u^{n,k-N}_{w_j} \in \vartheta^{k-N}(w_j)$ and hence we can pick a realisation $v^k \in \vartheta^{k-N}(w) \subseteq \vartheta^k(a)$ of the form
\begin{align}\label{eq:v-word-def}
    v^k = u_{w_1}^{n,k-N} \cdots u_{w_m}^{n,k-N}.
\end{align}
Since $\vartheta$ is geometrically compatible, it has unique realisation paths (recall \Cref{rem:URP}), so we have that
\begin{equation}
\label{EQ:inflation-word-decomposition}
\log (\# \vartheta^n(v^k)) = \sum_{j=1}^{m} \log (\# \vartheta^n(u^{n,k-N}_{w_j}))
= \sum_{j=1}^m h^{n,k-N}_{w_j} \Length(\vartheta^n(u^{n,k-N}_{w_j})) .
\end{equation}
Observe that $\Length(\vartheta^n(u^{n,k-N}_{w_j})) = \lambda^{n+k-N} \Length(w_j)$ and that
$
\Length(\vartheta^n(v^k)) = \sum_{j=1}^{m} \Length(\vartheta^n(u^{n,k-N}_{w_j})).
$
Hence, for each $j$ (and recalling that the length $m = |w|$ depends only on the letter $a$), we find that
\[
\frac{ \Length(\vartheta^n(u^{n,k-N}_{w_j}))}{\Length(\vartheta^n(v^k))}
\geqslant \frac{\min_{b \in \mc A} \Length_b}{m \max_{b \in \mc A} \Length_b}=:q_a.
\]
With $q = \min_{b \in \mc A} q_b >0$, the fact that $w$ contains every letter implies that $h^{n,k-N}_{\max}$ appears at least with weight $q \Length(\vartheta^n(v))$ in the last sum in \eqref{EQ:inflation-word-decomposition}. Using that $v^k \in \vartheta^k(a)$, we thus obtain
\begin{equation}
\label{EQ:recursion-for-max-min-entropy-approximants}
h^{n,k}_a \geqslant \frac{\log (\# \vartheta^n(v^k))}{\Length(\vartheta^n(v^k))} \geqslant q h^{n,k-N}_{\max} + (1-q) h^{n,k-N}_{\min},
\end{equation}
for all $a \in \mc A$. In particular, the same lower bound holds for $h^{n,k}_{\min}$ in place of $h^{n,k}_{a}$. Recall that, by \Cref{PROP:h-top-upper-bound}, we have
\[
h^{n,j}_{\max} \geqslant \frac{\lambda^n-1}{\lambda^n} h_{\operatorname{top}}(Y_{\vartheta}) =: h^{(n)}
\]
for all $j \in \N$. Together with \eqref{EQ:recursion-for-max-min-entropy-approximants}, we therefore find
\[
h^{n,k}_{\min} \geqslant q h^{(n)} + (1-q) h^{n,k-N}_{\min}.
\]
By applying this inequality repeatedly and comparing with \eqref{EQ:recursion-for-max-min-entropy-approximants}, we obtain that
\begin{align*}
    \frac{\log(\# \vartheta^n (v^k))}{L (\vartheta^n (v^k))} \geq q \sum_{j=0}^{\ell(k)-1} (1-q)^j h^{(n)} + h_{\min}^{n,r(k)}\tc
\end{align*}
where $\ell(k) = \lfloor k/N\rfloor$ and $r(k)=k-N\ell(k)$.
Letting $k \rightarrow \infty$ in the above, and bounding the second term below by zero, we thus obtain that
\[
\liminf_{m \to \infty} \frac{\log (\# \vartheta^m(a))}{\Length(\vartheta^m(a))}
= \liminf_{k \to \infty} \frac{\log (\# \vartheta^{n+k}(a))}{\Length(\vartheta^{n+k}(a))}
\geqslant \liminf_{k \to \infty} \frac{\log (\# \vartheta^n(v^k))}{\Length(\vartheta^n(v^k))} \geqslant h^{(n)} \xrightarrow{n \to \infty} h_{\operatorname{top}}(Y_{\vartheta}).
\]
This shows the lower bound for the inflation word entropy. The upper bound
\[
\limsup_{m \to \infty} \frac{\log (\# \vartheta^m(a))}{\Length(\vartheta^m(a))} 
\leqslant h_{\operatorname{top}}(Y_{\vartheta}),
\]
is immediate because all words in $\# \vartheta^m(a)$ are legal patterns of size $\Length(\vartheta^m(a))$ in $Y_{\vartheta}$.
\end{proof}

\subsection{Measure theoretic entropy of frequency measures}

In this section, we prove that frequency measures can be used to approximate the topological entropy to an arbitrary level (\Cref{COR:frequency-measure-approximation}).
This follows by combining \Cref{THM:geom-inf-entropy} with the results on entropy of frequency measures proved in \cite{MT-entropy}.

\begin{definition}
For a primitive random substitution $\vartheta_{\mathbf{P}}$ on a finite alphabet $\mathcal A$ and $m \in \N$, we let $H_{\mathbf{P}^m} = (H_{\mathbf{P}^m,a})_{a \in \mathcal A}$ denote the row vector with entries $H_{\mathbf{P}^m,a} = \Ent(\vartheta^m_{\mathbf{P}^m}(a))$ for all $a \in \mathcal A$.
\end{definition}

\begin{theorem}[{\cite[Thm.~3.5]{MT-entropy}}]\label{THM:MT-entropy}
    Let $\vartheta_{\bm P}$ be a primitive and geometrically compatible random substitution, with Perron--Frobenius eigenvalue $\lambda$ and right eigenvector $R$.
    Then, for all $m \in \N$,
    \begin{align*}
        \frac{1}{\lambda^m} H_{\mathbf{P}^m} R \leq h_{\mu_{\bm P}} (X_{\vartheta}) \leq \frac{1}{\lambda^m-1} H_{\mathbf{P}^m} R .
    \end{align*}
\end{theorem}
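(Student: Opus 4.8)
The plan is to compute $h_{\mu_{\mathbf P}}(X_\vartheta)$ directly from the partition $\alpha$ into cylinders $[a]$, $a \in \mc A$, via the standard formula $h_{\mu_{\mathbf P}}(X_\vartheta) = \lim_{N\to\infty} \frac{1}{N}\Ent(W_N)$, where $W_N = x_{[0,N-1]}$ is the random length-$N$ word seen under $\mu_{\mathbf P}$. The key structural input is that $\mu_{\mathbf P}$, being the limit measure of $(\vartheta^n_{\mathbf P^n}(a))_n$, is carried by a hierarchical generating coupling: a $\mu_{\mathbf P}$-typical $x$ arises as $x = \vartheta^{m}(y)$ with $y \sim \mu_{\mathbf P}$ and independent inflation-word choices, and this can be iterated across levels. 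On a suitably enlarged probability space I would record the auxiliary datum $\mc B$ consisting of the positions of the level-$m$ inflation-word boundaries inside the window together with the types $b_i \in \mc A$ of the corresponding parent letters. Given $\mc B$, the content of each full slot is then an independent realisation of $\vartheta^m_{\mathbf P^m}(b_i)$, of entropy $H_{\mathbf P^m, b_i}$. Throughout, I would use that the empirical letter frequencies of the level-$jm$ parent sequences converge to $R$, so that averages of $H_{\mathbf P^m, b}$ over a long window converge to $H_{\mathbf P^m} R$.

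For the lower bound, the point is that conditioning never increases entropy, so $\Ent(W_N) \geq \Ent(W_N \mid \mc B)$. Given $\mc B$, the window decomposes into slots whose full-slot contents are in bijection with the independent choices $v_i \in \vartheta^m(b_i)$, so that $\Ent(W_N \mid \mc B) = \sum_{i} H_{\mathbf P^m, b_i} + O(1)$, where the sum runs over full slots and the $O(1)$ absorbs the at most two partial boundary slots (each of bounded size for fixed $m$). Since there are $\approx N/\lambda^m$ slots and the type frequencies tend to $R$, dividing by $N$ and letting $N\to\infty$ yields $h_{\mu_{\mathbf P}}(X_\vartheta) \geq \frac{1}{\lambda^m} H_{\mathbf P^m} R$. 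Recognisability is not needed here: we hand ourselves the boundaries for free through $\mc B$, so the possible ambiguity of the inflation-word factorisation only lowers the conditioned entropy, never the unconditioned quantity we are bounding from below.

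For the upper bound, I would iterate the coupling $k$ times, writing $x = \vartheta^{km}(z)$ with $z$ the level-$km$ parent over the relevant region. Then $W_N$ is a deterministic function of $z$ together with all inflation-word choices made across the $k$ rounds, so subadditivity of entropy gives $\Ent(W_N) \leq \Ent(z) + \sum_{j=1}^{k} E_j$, where $E_j$ is the entropy of the choices made at round $j$. The seed $z$ over the window has $\approx N\lambda^{-km}$ letters and contributes at most $(N\lambda^{-km})\log \#\mc A + O(1)$; round $j$ acts on $\approx N\lambda^{-jm}$ parent letters, each contributing on average $H_{\mathbf P^m} R$. Summing the geometric series $\sum_{j\geq 1}\lambda^{-jm} = \frac{1}{\lambda^m - 1}$, dividing by $N$, and letting $N\to\infty$ (killing the edge terms) and then $k\to\infty$ (killing the seed term) gives $h_{\mu_{\mathbf P}}(X_\vartheta) \leq \frac{1}{\lambda^m - 1} H_{\mathbf P^m} R$.

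The algebraic backbone tying the two normalisations together is the conditional-entropy recursion coming from $\vartheta^{n+m}(a) = \vartheta^m(\vartheta^n(a))$ with independent choices: writing $c_n = H_{\mathbf P^n} R$, the chain rule and $M^n R = \lambda^n R$ give $c_{n+m} = c_n + \lambda^n c_m$, whence $\frac{H_{\mathbf P^m} R}{\lambda^m - 1}$ is independent of $m$ and equals $\lim_n c_n/\lambda^n$; the two bounds therefore pin down $h_{\mu_{\mathbf P}}(X_\vartheta)$ exactly as $m \to \infty$. The main obstacle is the rigorous side of the coupling argument: identifying the frequency measure with the distribution of the hierarchical generating process, and controlling the boundary (edge-slot) effects uniformly across all $k$ levels so that the error terms are genuinely $o(N)$ after dividing by $N$. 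This, together with the convergence of the level-$jm$ empirical letter frequencies to $R$, is where the substitution-specific input (primitivity and geometric compatibility) enters; the entropy estimates themselves are then soft consequences of subadditivity and the chain rule.
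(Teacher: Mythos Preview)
The paper does not prove this statement: it is quoted verbatim as \cite[Thm.~3.5]{MT-entropy} and used as a black box in the proof of \Cref{COR:frequency-measure-approximation}. There is therefore no ``paper's own proof'' to compare against.

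That said, your sketch is along the right lines and is essentially the argument one finds in the cited reference. A couple of places deserve tightening. For the lower bound you need to be explicit that you are working on an enlarged probability space realising the coupling $x = \vartheta^m(y)$ with independent slot choices, and that the $X_\vartheta$-marginal of this coupling really is $\mu_{\mathbf P}$; this is exactly the self-similarity $\trans_{\mathbf P}(\mu_{\mathbf P}) = \mu_{\mathbf P}$ (cf.\ \Cref{LEM:frequency-fixed-measure}), which holds without recognisability. With that in hand, $\Ent(W_N) \geq \Ent(W_N \mid \mc B)$ is legitimate, and your identification of the conditional term uses unique realisation paths (a consequence of geometric compatibility) to get equality rather than just an inequality when splitting $\Ent(\vartheta^m(v))$ over the letters of $v$.

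For the upper bound, the phrase ``each contributing on average $H_{\mathbf P^m} R$'' hides a step: the quantity $E_j$ is a \emph{conditional} entropy $\Ent(C_j \mid z, C_1,\dots,C_{j-1})$, and it equals $\mathbb E\bigl[\sum_i H_{\mathbf P^m, b_i^{(j)}}\bigr]$ because the choices are conditionally independent given the parent types. Only then does the letter-frequency argument turn this expectation into $N\lambda^{-jm} H_{\mathbf P^m} R + o(N)$. Your final algebraic remark that $c_{n+m} = c_n + \lambda^n c_m$ forces $c_m/(\lambda^m - 1)$ to be independent of $m$ is correct and a nice consistency check, but note it again leans on unique realisation paths for the equality in the chain-rule step.
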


\begin{proof}[Proof of \Cref{COR:frequency-measure-approximation}]
    Let $(\mu_m)_m$ be the sequence of frequency measures corresponding to equi-distributed probabilities on $\vartheta^m$.
    For each $m \in \N$, let $R_m$ denote the right Perron--Frobenius eigenvector of the substitution matrix for the choice of probabilities associated with the measure $\mu_m$.
    Further, we let $\widetilde{\mu}_m$ denote the push-forward of $\mu_m$ onto $Y_{\vartheta}$.

    Let $\varepsilon > 0$. By \Cref{THM:geom-inf-entropy}, there is an $M \in \N$ such that for all $m \geq M$ and all $a \in \mc A$, we have $\log (\# \vartheta^m (a))/\lambda^m > \Length_a (h_{\operatorname{top}} (Y_{\vartheta})-\varepsilon)$, noting that $\Length (\vartheta^m (a)) = \lambda^m\Length_a$.
    Thus, it follows by \Cref{THM:MT-entropy} and Abramov's formula that for all $m \geq M$, we have
    \begin{align*}
        h_{\widetilde{\mu}_m} (Y_{\vartheta}) 
        \geq \frac{1}{\Length R_m} \sum_{a \in \mc A} R_{m,a} \frac{\log (\# \vartheta^m (a))}{\lambda^m} 
        > \frac{1}{\Length R_m} \sum_{a \in \mc A} R_{m,a} \Length_a (h_{\operatorname{top}} (Y_{\vartheta})-\varepsilon)
        = h_{\operatorname{top}} (Y_{\vartheta}) - \varepsilon\tc
    \end{align*}
    Since this holds for all $\varepsilon>0$, we conclude that $h_{\widetilde{\mu}_m} (Y_{\vartheta}) \rightarrow h_{\operatorname{top}} (Y_{\vartheta})$ as $m \rightarrow \infty$.
    Thus, by the compactness of the space of shift-invariant measures on $X_{\vartheta}$, we conclude that there is a sequence of frequency measures whose push-forwards converge weakly to a measure of maximal entropy on $Y_{\vartheta}$.
    \end{proof}

\subsection{Relationship between entropy of symbolic and geometric hulls}

\begin{definition}
Let $X$ be a subshift over a finite alphabet $\mc A$. We say that a probability vector $\eta \in [0,1]^{\mc A}$ is a \emph{letter frequency vector} for $X$ if there exists $x \in X$ such that $\eta = \eta(x)$, where
\[
\eta(x)_a = \lim_{n \to \infty} \frac{|x_{[-n,n]}|_a}{2n+1},
\]
for all $a \in \mc A$.
\end{definition}
\begin{lemma}
\label{LEM:symb-geom-entropy-relation}
Let $Y$ be the suspension of a subshift $X$, with associated length vector $\Length \colon \mc A \to \R_+$ and roof function $\pi(x) = \Length_{x_0}$.
    Further, let $\eta^{-}$ and $\eta^{+}$ denote letter frequency vectors that minimise and maximise the quantities $\Length \eta^-$ and $\Length \eta^+$, respectively.
    Then, the following inequalities hold:
    \begin{equation*}
        \frac{1}{\Length \eta^{+}} h_{\operatorname{top}} (X) \leq h_{\operatorname{top}} (Y) \leq \frac{1}{\Length \eta^{-}} h_{\operatorname{top}} (X)\tp
    \end{equation*}
\end{lemma}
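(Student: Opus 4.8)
The plan is to reduce both inequalities to a single two-sided bound on the normalisation factor $\Length R^\mu$, using the variational principle on $X$ and on $Y$ together with Abramov's formula. By the variational principle for the suspension flow, $h_{\operatorname{top}}(Y) = \sup_{\widetilde\mu} h_{\widetilde\mu}(Y)$, where $\widetilde\mu$ ranges over $T$-invariant probability measures, and it suffices to take the supremum over ergodic $\widetilde\mu$. Through the bijection $\mu \mapsto \widetilde\mu$ recalled above (which preserves ergodicity) and Abramov's formula $h_{\widetilde\mu}(Y) = h_\mu(X)/\mu(\pi)$ with $\mu(\pi) = \Length R^\mu$, this rewrites as
\[
h_{\operatorname{top}}(Y) = \sup_{\mu} \frac{h_\mu(X)}{\Length R^\mu},
\]
the supremum taken over ergodic $S$-invariant measures $\mu$ on $X$, where $R^\mu = (\mu([a]))_{a \in \mc A}$. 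Combined with the variational principle $\sup_\mu h_\mu(X) = h_{\operatorname{top}}(X)$ on the base, everything then hinges on controlling $\Length R^\mu$.

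The key step is to observe that for every ergodic $\mu$, the vector $R^\mu$ is a letter frequency vector in the sense of the preceding definition. Indeed, by Birkhoff's ergodic theorem, $\mu$-almost every $x \in X$ satisfies $\eta(x)_a = \mu([a]) = R^\mu_a$ for all $a \in \mc A$, so that $R^\mu = \eta(x)$ for some $x \in X$. Since $\eta^-$ and $\eta^+$ minimise and maximise $\Length \eta$ over all letter frequency vectors, this yields the sandwich
\[
\Length \eta^- \leqslant \Length R^\mu \leqslant \Length \eta^+
\]
for every ergodic $\mu$. (If one prefers to argue with arbitrary invariant measures, the ergodic decomposition writes $\Length R^\mu$ as an average of values in $[\Length \eta^-, \Length \eta^+]$, hence in the same interval.)

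With this bound the two estimates are immediate. For the upper bound, $\Length R^\mu \geqslant \Length \eta^-$ gives $h_\mu(X)/(\Length R^\mu) \leqslant h_\mu(X)/(\Length \eta^-)$ for each ergodic $\mu$; taking the supremum produces $h_{\operatorname{top}}(Y) \leqslant h_{\operatorname{top}}(X)/(\Length \eta^-)$. For the lower bound, $\Length R^\mu \leqslant \Length \eta^+$ gives $h_\mu(X)/(\Length R^\mu) \geqslant h_\mu(X)/(\Length \eta^+)$, and taking the supremum yields $h_{\operatorname{top}}(Y) \geqslant h_{\operatorname{top}}(X)/(\Length \eta^+)$. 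I do not expect a serious obstacle here: the only delicate point is the identification of $R^\mu$ with a genuine letter frequency vector, which is exactly where ergodicity (via Birkhoff) is used, together with the legitimacy of restricting both variational principles to ergodic measures.
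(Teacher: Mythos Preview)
Your proof is correct and follows essentially the same approach as the paper: both combine the variational principle for the suspension flow with Abramov's formula to write $h_{\operatorname{top}}(Y)=\sup_{\mu}h_\mu/(\Length R^\mu)$ over ergodic $\mu$, then use ergodicity to identify $R^\mu$ with a letter frequency vector and bound $\Length R^\mu$ between $\Length\eta^-$ and $\Length\eta^+$. Your write-up is slightly more detailed (you spell out the appeal to Birkhoff and note the ergodic-decomposition alternative), but the argument is the same.
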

\begin{proof}
    By the variational principle for suspension flows and Abramov's formula, we have
    \[
    h_{\operatorname{top}}(Y) = \sup h_{\widetilde{\mu}} = \sup 
    \frac{h_\mu}{\Length R^{\mu}},
    \]
    where the supremum is taken over all ergodic measures. By ergodicity, each such measure satisfies that $R^\mu = \eta(x)$ for some $x \in X$ and therefore is a letter frequency vector for $X$. Hence, $\Length \eta^- \leqslant \Length R^\mu \leqslant \Length \eta^+$, implying the desired bounds.
\end{proof}

\begin{corollary}\label{COR:entropy-equiv}
    If $\vartheta$ is a constant length or compatible random substitution and $\Length$ is the left eigenvector, normalised such that $LR = 1$, then $h_{\operatorname{top}} (X_{\vartheta}) = h_{\operatorname{top}} (Y_{\vartheta})$.
\end{corollary}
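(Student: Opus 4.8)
The plan is to derive the corollary directly from \Cref{LEM:symb-geom-entropy-relation}. That lemma sandwiches $h_{\operatorname{top}}(Y_\vartheta)$ between $h_{\operatorname{top}}(X_\vartheta)/\Length\eta^{+}$ and $h_{\operatorname{top}}(X_\vartheta)/\Length\eta^{-}$, where $\eta^{\pm}$ range over the letter frequency vectors of $X_\vartheta$ that extremise $\Length\eta$. Hence it suffices to prove that $\Length\eta = 1$ for \emph{every} letter frequency vector $\eta$ of $X_\vartheta$; then $\Length\eta^{-} = \Length\eta^{+} = 1$ and both bounds collapse to the equality $h_{\operatorname{top}}(X_\vartheta) = h_{\operatorname{top}}(Y_\vartheta)$. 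I would treat the two hypotheses separately, since the reason $\Length\eta$ is constant differs in each case.

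In the constant length case, every realisation in $\vartheta(a)$ has the same length $\ell$, so each marginal substitution matrix has all column sums equal to $\ell$. Thus the all-ones vector $\mathds{1}$ is a left eigenvector with eigenvalue $\ell$, and by uniqueness up to scaling of the Perron--Frobenius left eigenvector of a primitive matrix, $\Length$ must be a scalar multiple of $\mathds{1}$. The normalisation $\Length R = 1$ together with $\norm{R}_1 = 1$ forces $\Length = \mathds{1}$, so $\Length_a = 1$ for all $a \in \mc A$. Since any letter frequency vector is a probability vector, $\Length\eta = \sum_{a \in \mc A}\eta_a = 1$, as required.

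In the compatible case, all marginals share a single substitution matrix $M$, so for every $a \in \mc A$ and $n \in \N$ the Abelianisation of any realisation in $\vartheta^n(a)$ equals $M^n\phi(a)$, independently of the random choices. Consequently the letter frequency of any $x \in X_\vartheta$ is governed entirely by $M$: iterating and applying Perron--Frobenius theory gives $\eta(x) = \lim_{n\to\infty} M^n\phi(a)/\norm{M^n\phi(a)}_1 = R$, the normalised right eigenvector, for every $x$ for which the limit exists. Hence there is a unique letter frequency vector $\eta = R$, and $\Length\eta = \Length R = 1$ by the chosen normalisation.

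The only genuinely delicate point is the compatible case, where one must argue that all points of $X_\vartheta$ share the single letter frequency vector $R$; this is precisely where compatibility is essential, as it removes the randomness from the Abelianisation and lets Perron--Frobenius pin down the frequency. In the constant length case the statement reduces to normalisation bookkeeping once $\Length = \mathds{1}$ is identified. With $\Length\eta \equiv 1$ established in both cases, \Cref{LEM:symb-geom-entropy-relation} immediately yields the claimed equality of topological entropies.
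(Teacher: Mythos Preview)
Your proof is correct and follows exactly the route the paper intends: the corollary is stated without proof and is meant to follow immediately from \Cref{LEM:symb-geom-entropy-relation} once one observes that $\Length\eta = 1$ for every letter frequency vector, which you verify separately in the constant length and compatible cases. Your constant length argument is complete; in the compatible case your passage from the inflation-word frequencies $M^n\phi(a)/\lVert M^n\phi(a)\rVert_1$ to the letter frequency $\eta(x)$ of an arbitrary $x\in X_\vartheta$ is asserted rather than fully justified, but this is a standard fact that the paper also takes for granted.
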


In the general geometrically compatible setting, it is possible for the inequalities in \Cref{LEM:symb-geom-entropy-relation} to be strict: we give an explicit example in the next section.
In fact, there exist examples of geometrically compatible random substitutions (which are not compatible or constant length) for which the measure of maximal entropy is not a measure of maximal geometric entropy: see \Cref{exa:golden-mean-shift}.

\subsection{Examples}

The following examples illustrate how \Cref{THM:geom-inf-entropy} can be used to obtain the topological entropy for subshifts of random substitutions that are neither compatible nor constant length.

\begin{example}
    Let $\vartheta$ be the primitive random substitution defined by 
    \[
    \vartheta \colon a \mapsto \{ab,ac\},\, b,c \mapsto \{a\},
    \]
    which is geometrically compatible but neither constant length nor compatible.
    We show that
    \begin{align*}
        h_{\operatorname{top}} (X_{\vartheta}) = h_{\operatorname{top}} (Y_{\vartheta}) = \frac{1}{\tau^2} \log 2\tc
    \end{align*}
    where $\tau = (1+\sqrt{5})/2$ denotes the golden ratio.
    To this end, we first observe that the vector
    \begin{align*}
        L = \frac{1}{1+\tau^{-2}} \left(\tau,1,1\right)
    \end{align*}
    is the left Perron--Frobenius eigenvector of the substitution matrix for an appropriate choice of normalisation.
    Since the frequency of $a$ in every element of $X_{\vartheta}$ is equal to $\tau^{-1}$, every letter frequency vector $\eta$ satisfies $L \eta = 1$.
    Thus, it follows by \Cref{LEM:symb-geom-entropy-relation} that $h_{\operatorname{top}} (X_{\vartheta}) = h_{\operatorname{top}} (Y_{\vartheta})$.
    By \Cref{THM:geom-inf-entropy}, both coincide with the quantity $h_a^{G}$.
    Now, note that for all $n \in \N$, we have $\vartheta^{n+1} (a) = \vartheta^n (ab) \cup \vartheta^n (ac) = \vartheta^n (ab)$, so
    \begin{align*}
        \# \vartheta^{n+1} (a) = \# \vartheta^n (ab) = \# \vartheta^n (a) \# \vartheta^n (b) = \# \vartheta^n (a) \# \vartheta^{n-1} (a)\tp
    \end{align*}
    Iterating this identity, and noting that $\# \vartheta^{1} (a) = 2$, we obtain that $\# \vartheta^{n} (a) = 2^{F_n}$. 
    Hence,
    \begin{align*}
        h_a^G = \lim_{n \rightarrow \infty} \frac{1}{L(\vartheta^n (a))} \log (\# \vartheta^n (a)) = \lim_{n \rightarrow \infty} \frac{F_n}{\tau^{n+1} (1+\tau^{-2})^{-1}} \log 2 = \lim_{n \rightarrow \infty} \frac{\tau^2+1}{\tau^3 \sqrt{5}} \log 2= \frac{1}{\tau^2} \log 2\tc
    \end{align*}
    where in the second last equality we have applied Binet's formula and in the final equality we have used the identity $(\tau^2+1) = \tau \sqrt{5}$.
    This establises the desired result.
\end{example}

The following example demonstrates that, in general, the topological entropy of the symbolic hull need not coincide with the geometric inflation word entropy if the symbolic length $\lvert \vartheta (a) \rvert$ is not well defined for all $a \in \mc A$.

\begin{example}\label{exa:golden-mean-shift}
Let $\vartheta$ be the random substitution from \Cref{FIG:geom-compatible}, defined by
\[
\vartheta \colon a \mapsto \{abb \},
\quad b \mapsto \{ a, bb \}.
\]
The random substitution $\vartheta$ is geometrically compatible, but neither compatible nor constant length.
Further, in contrast to the previous example, the symbolic inflation word lengths $\lvert \vartheta (a_i) \rvert$ are not well defined for all letters $a_i$.
All marginals of $\vartheta$ have a substitution matrix with Perron--Frobenius eigenvalue $\lambda = 2$ and left eigenvector $L = (2,1)$. We take this vector $L$ to define $Y_\vartheta$.

By \Cref{THM:geom-inf-entropy}, we can calculate $h_{\operatorname{top}} (Y_\vartheta)$ via the geometric inflation word entropy.
Note that $\vartheta (a) \subseteq \vartheta (bb)$ and so it follows inductively that $\# \vartheta^m (b) = (\# \vartheta^{m-1} (b))^2 = (\# \vartheta (b))^{2^{m-1}} = 2^{2^{m-1}}$ for all $m \in \N$.
Hence,
\begin{align*}
    \frac{1}{L (\vartheta^m (b))} \log (\# \vartheta^m (b)) = \frac{1}{2^m} \log \big(2^{2^{m-1}} \big) = \frac{1}{2} \log 2
\end{align*}
for all $m \in \N$, so we conclude that $h_{\operatorname{top}} (Y_{\vartheta}) = \log (2)/2$.

We now demonstrate that the (unique) measure of maximal entropy on the symbolic hull does not correspond to a measure of maximal entropy on the associated geometric hull.
Let $X'$ be the shift of finite type defined over the alphabet $\{a,b_0,b_1\}$ by the forbidden word set $\mc F = \{a b_1, b_1 b_1, b_0 b_0, b_0 a\}$ and let $\pi \colon \{a,b_0,b_1\}^{\Z} \rightarrow \{a,b\}$ be the factor map given by $\pi(x)_i = \pi'(x_i)$, where $\pi' (a) = a$, $\pi' (b_0) = \pi' (b_1) = b$.
It can easily be verified that $\pi (X') = X_{\vartheta}$.
Further, $\pi$ is one-to-one everywhere except on the sequence $b^{\Z}$.
Thus, the subshift $X_{\vartheta}$ is intrinsically ergodic with the unique measure of maximal entropy $\mu$ given by the pushforward by $\pi$ of the Parry measure on $X'$.
Hence, by standard results on the entropy of shifts of finite type, we obtain that $h_{\mu} (X_{\vartheta}) = h_{\operatorname{top}} (X_{\vartheta}) = \log \tau$, where $\tau$ is the golden ratio.
Moreover, the letter frequency vector $R^{\mu}$ associated with $\mu$ is given by
\begin{align*}
    R^{\mu} = \left( \frac{\tau}{\tau+2}, \frac{2}{\tau+2} \right)
\end{align*}
and so we have
\begin{align*}
    L R^{\mu} = \frac{2\tau+2}{\tau+2} \tp
\end{align*}
Thus, it follows by Abramov's formula that the lift $\widetilde{\mu}$ of $\mu$ onto $Y_{\vartheta}$ has entropy
\begin{align*}
    h_{\widetilde{\mu}} (Y_{\vartheta}) = \frac{h_{\mu} (X_{\vartheta})}{L R^{\mu}} = \left( \frac{\tau+2}{2\tau+2} \right) \log \tau\tp
\end{align*}
Since
\begin{align*}
    0.3325 \approx \left( \frac{\tau+2}{2\tau+2} \right) \log \tau < \frac{1}{2} \log 2 \approx 0.3466\tc
\end{align*}
it follows that $h_{\widetilde{\mu}} (Y_{\vartheta}) < h_{\operatorname{top}} (Y_{\vartheta})$, and so $\widetilde{\mu}$ is not a measure of maximal entropy for $Y_{\vartheta}$.
\end{example}

\section{Structure of recognisable subshifts}
\label{SEC:structure-of-recognisable-subshifts}

From now on, we will assume that $\vartheta$ is recognisable.
For the special case of compatible random substitutions, it was noted in \cite{fokkink-rust} that there is an equivalent local formulation of recognisability. This is in line with the following result.

\begin{lemma}
\label{LEM:recognisability-continuous}
In the definition of recognisability, the data $x,k$ and $(v_i)_{i \in \Z}$ depend continuously on $y$.
\end{lemma}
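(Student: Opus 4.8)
The plan is to show that the recognisability map $y \mapsto (x, k, (v_i)_{i \in \Z})$ is locally constant in the relevant sense, so that continuity reduces to the observation that the decomposition of $y$ into inflation words is determined by a finite window around the origin. Concretely, I would first argue that recognisability is a \emph{uniform} property: there exists a \emph{recognisability radius} $r \in \N$ such that the inflation word $v_0$ containing the position $0$ (together with the cut points and the letter $x_0$) is determined by $y_{[-r, r]}$. To establish this, I would invoke compactness of $X_\vartheta$ together with the uniqueness part of \Cref{DEF:recognisability}. The key auxiliary claim is that, if two points $y, y' \in X_\vartheta$ agree on a sufficiently large central block, then their recognisability data must agree on the corresponding central portion; otherwise one could extract, via a diagonal/compactness argument, a single point admitting two distinct recognisability decompositions, contradicting uniqueness.

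The main steps, in order, are as follows. First I would fix attention on the central inflation word and the cut $k$: I claim there is $r_0$ such that $y_{[-r_0, r_0]}$ determines $v_0$, the offset $k$ (equivalently $t$ in the geometric picture), and the letter $x_0$. Second, having pinned the central tile and its cut points, the neighbouring tiles $v_{\pm 1}, v_{\pm 2}, \dots$ are read off deterministically by continuing the decomposition outward: each $v_i$ for $|i| \leqslant m$ is determined by a window $y_{[-r_m, r_m]}$ of radius $r_m$ depending only on $m$ (since geometric lengths are bounded below by $\min_a \Length_a$, finitely many tiles fill any bounded interval). Correspondingly, $x_{[-m,m]}$ is determined, since $x_i$ is the type of the tile $v_i$. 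Third, I would translate this into the topology: for the product topology on $X_\vartheta$, a neighbourhood basis of $y$ is given by agreement on central blocks, and a neighbourhood basis of the data is given by agreement of $x$ on central blocks, agreement of $k$, and agreement of finitely many $v_i$; the step-two estimate shows precisely that each basic neighbourhood of the data contains the image of a basic neighbourhood of $y$, which is continuity.

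The crux is the uniform recognisability radius in the first step, i.e. promoting the pointwise uniqueness in \Cref{DEF:recognisability} to a statement with a single radius valid on all of $X_\vartheta$. I would prove this by contradiction: if no finite radius worked, then for each $n$ there would be points $y^{(n)}, z^{(n)}$ agreeing on $[-n,n]$ but whose central recognisability data differ; passing to a convergent subsequence (using compactness of $X_\vartheta$ and the fact that cut offsets $k$ lie in a finite range $0 \leqslant k < \max_a \max_{v \in \vartheta(a)} |v|$, while the central tile types lie in a finite set) yields a limit point $y^\ast$ together with two distinct admissible decompositions, contradicting uniqueness. I expect this compactness-plus-uniqueness argument to be the main obstacle, since one must be careful that the limiting decompositions are genuinely valid recognisability data (each $v_i \in \vartheta(x_i)$ and the concatenation equals $S^{-k} y^\ast$), which requires that the defining relations are closed conditions preserved under the limit. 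Once the uniform radius is in hand, the remaining steps are routine bookkeeping about how many tiles fit in a window of given geometric length.
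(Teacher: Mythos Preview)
Your proposal is correct and rests on the same ingredients as the paper's proof: compactness of the space of possible decomposition data (finite range for $k$, finite alphabets for $x_i$ and $v_i$), the fact that the defining relations $v_i \in \vartheta(x_i)$ and $S^{-k}y = \cdots v_{-1}.v_0 v_1 \cdots$ are closed conditions, and uniqueness from \Cref{DEF:recognisability}. The difference is organisational. You first promote pointwise uniqueness to a \emph{uniform recognisability radius} via a contradiction/compactness argument on pairs $(y^{(n)}, z^{(n)})$, and then read off continuity as local constancy. The paper instead works directly with sequential continuity: given $y^n \to y$, it passes to a subsequence along which the data $D^n = (x^n, k^n, (v_i^n)_i)$ converges, verifies that the limit is valid recognisability data for $y$, and concludes by uniqueness that this is the only accumulation point. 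Your route yields the slightly stronger intermediate statement (an explicit local radius), at the cost of an extra layer of argument; the paper's is more economical but gives no explicit radius. The ``main obstacle'' you anticipate---checking that limiting decompositions are genuine recognisability data---is exactly the one non-trivial verification in the paper's proof as well.
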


\begin{proof}
Assume that $y^n \in X_{\vartheta}$ satisfies $\lim_{n \to \infty} y^n = y$. For $n \in \N$, let $D^n = (x^n, k^n, (v^n_i)_{i \in \Z})$ be the recognisability data of $y^n$. Due to compactness, it suffices to show that the recognisability data of $y$ is the only accumulation point of $(D^n)_{n \in \N}$. Up to restricting to a subsequence, we can in fact assume that $D^n$ converges to some $(x,k,(v_i)_{i \in \Z})$. In particular, $(v_i^n)_{n \in \N}$ and $(x^n_i)_{n \in \N}$ are eventually constant for all $i\in \Z$, implying that $v_i \in \vartheta(x_i)$.
For convenience, let $v^n = \cdots v^{n}_{-1}. v^n_0 \cdots$ and similarly for $v$. Given a ball $B$ around the origin, we can choose $n$ large enough that $y|_B = y_n|_B = S^k v^n|_B = S^k v|_B $. Since $B$ was arbitrary, we have $y = S^k v$, and hence $(x,k,(v_n)_{n \in \Z})$ is indeed the unique recognisability data of $y$.
\end{proof}

\begin{remark}
As an immediate consequence, we obtain that if $\vartheta$ is recognisable and of constant length $\ell$, the $\ell$-adic odometer $ \Omega_\ell \subset \prod_{n=1}^\infty \Z/\ell^n\Z$, with addition $+1$, is a topological factor of $(X_{\vartheta},S)$. Indeed, we obtain an explicit factor map $x \mapsto (k_n)_{n \in \N}$, where $k_n$ is the unique number in $[0,\ell^n)$ such that $x \in S^{k_n}(\vartheta^n(X_{\vartheta}))$. In particular, $(X_{\vartheta},S)$ cannot be topologically mixing. For related results on topological mixing under the assumption of compatibility, we refer the reader to \cite{Rust_Miro_Sadun}. In particular, the observation in this remark slightly extends \cite[Cor.~14]{Rust_Miro_Sadun} by dropping the assumption of compatibility.
\end{remark}

For recognisable random substitutions, it is often convenient to consider the compact subset $\vartheta(X_{\vartheta})$ and the associated induced transformation. This gives rise to the following structure.

\begin{lemma}
\label{LEM:conjugation}
Let $\vartheta$ be a recognisable random substitution. Assume that $A = \vartheta(X_{\vartheta})$ and let $(A,S_A)$ be the induced transformation of $(X_{\vartheta}, S)$ on $A$. Then $(A,S_A)$ is topologically conjugate to the shift map $S$ on the space $Z_{\vartheta} \subset \mc B^{\Z}$, where $\mc B =\{(a,v) : a \in \mc A, v \in \vartheta(a) \}$ and
\[
Z_{\vartheta} = \{ (x_i,v_i)_{i \in \Z} \in \mc B^{\Z}: (x_i)_{i \in \Z} \in X_{\vartheta} \}.
\]
\end{lemma}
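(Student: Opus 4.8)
The plan is to exhibit the conjugacy explicitly through the recognisable decomposition. First I would observe that, by \Cref{DEF:recognisability}, a point $y \in X_\vartheta$ lies in $A = \vartheta(X_\vartheta)$ precisely when its recognisability data $(x,k,(v_i)_{i \in \Z})$ has $k = 0$: indeed $y \in A$ means $y = \cdots v_{-1}.v_0 v_1 \cdots$ with $x \in X_\vartheta$ and $v_i \in \vartheta(x_i)$, which is exactly a recognisability decomposition with cut point at the origin, and uniqueness of the data forces $k = 0$. This lets me define $\Phi \colon A \to \mc B^\Z$ by $\Phi(y) = ((x_i,v_i))_{i \in \Z}$, which lands in $Z_\vartheta$ since $(x_i)_{i \in \Z} \in X_\vartheta$ and $v_i \in \vartheta(x_i)$. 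Injectivity of $\Phi$ is immediate: if $\Phi(y) = \Phi(y')$ then the two decompositions agree word-by-word, so $y = y'$. Surjectivity follows because any $((x_i,v_i))_{i \in \Z} \in Z_\vartheta$ yields $y := \cdots v_{-1}.v_0 \cdots \in \vartheta(x) \subset A$, whose recognisability data is $(x,0,(v_i)_{i \in \Z})$ by uniqueness, so $\Phi(y)$ is the given sequence.

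For the topology, I would invoke \Cref{LEM:recognisability-continuous}: if $y^n \to y$ in $A$, then the recognisability data converge, so for each fixed $i$ the pairs $(x^n_i, v^n_i)$ are eventually equal to $(x_i,v_i)$, which is exactly convergence of $\Phi(y^n)$ to $\Phi(y)$ in $\mc B^\Z$; hence $\Phi$ is continuous. Both $A$ (a compact subset of $X_\vartheta$) and $Z_\vartheta$ (the preimage of $X_\vartheta$ under the continuous projection $\mc B^\Z \to \mc A^\Z$, hence closed and compact) are compact metric spaces, so the continuous bijection $\Phi$ is automatically a homeomorphism.

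It remains to match the dynamics, and this is where the main work lies. I would first pin down the return time. For $y \in A$ with data $(x,0,(v_i)_{i \in \Z})$, the uniqueness clause is shift-equivariant: the decomposition of $y$ induces, after re-centring at any integer $n$, the recognisability data of $S^n y$, so $S^n y \in A$ holds exactly when $n$ is an inflation-word boundary of $y$. The positive boundaries are $|v_0|, |v_0| + |v_1|, \dots$, so $r_A(y) = |v_0|$; in particular $r_A \leqslant \max_{a \in \mc A} \max_{v \in \vartheta(a)} |v| < \infty$, so the induced transformation is well defined with bounded return times. Consequently $S_A(y) = S^{|v_0|}(y) = \cdots v_{-1} v_0 . v_1 v_2 \cdots$ has recognisability data $(S x, 0, (v_{i+1})_{i \in \Z})$, giving $\Phi(S_A y) = ((x_{i+1}, v_{i+1}))_{i \in \Z} = S(\Phi(y))$. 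Hence $\Phi$ intertwines $S_A$ with the shift $S$ on $Z_\vartheta$ and is the desired topological conjugacy.

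The step I expect to be most delicate is the identification $r_A(y) = |v_0|$, since it relies on the fact that the recognisable decomposition is shift-equivariant, equivalently that the set of inflation-word boundaries of $y$ is intrinsic and independent of where one places the origin. This is a direct but essential consequence of the uniqueness of the recognisability data in \Cref{DEF:recognisability}; once it is in place, the bijectivity and continuity arguments are routine.
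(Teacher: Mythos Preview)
Your proof is correct and follows essentially the same approach as the paper: define the conjugacy via the recognisability data, check bijectivity directly, obtain continuity from \Cref{LEM:recognisability-continuous}, identify $r_A(y)=|v_0|$, and verify the intertwining relation. You supply more detail than the paper does---in particular the characterisation $A=\{y:k=0\}$, the compactness arguments for $A$ and $Z_\vartheta$, and the shift-equivariance justification for the return time---but the underlying argument is the same.
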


\begin{proof}
    We give the conjugacy map $\psi\colon \vartheta(X_{\vartheta}) \to Z_\vartheta$ explicitly. Given $y \in \vartheta(X_{\vartheta})$, recognisability implies that there is a unique element $x \in X_{\vartheta}$ and a unique sequence $(v_i)_{i \in \Z}$ with $v_i \in \vartheta(x_i)$ such that $y = (v_i)_{i \in \Z}$ is a concatenation of these inflation words. 
    Set
    \[
    \psi(y) = (x_i,v_i)_{i \in \Z}.
    \]
    Both injectivity and surjectivity are readily verified. The inverse is $\psi^{-1}((x_i,v_i)_{i \in \Z}) = (v_i)_{i \in \Z}$. 
    Continuity of $\psi$ follows directly from \Cref{LEM:recognisability-continuous}.
    In the above notation, the first return time $r_A$ of $y$ satisfies $r_A(y) = |v_0|$ and therefore, $S_A y = (v_{i+1})_{i \in \Z}$. This implies
    \[
    \psi(S_A y) = (x_{i+1},v_{i+1})_{i \in \Z} = S \psi(y),
    \]
    which is the required conjugacy relation.
\end{proof}

Thanks to \Cref{LEM:conjugation}, we often identify $(A,S_A)$ with $(Z_\vartheta,S)$, with slight abuse of notation. Due to the established topological conjugacy, this has no effect on results concerning entropy and ergodic measures, other than a renaming of objects.
Note that the same construction works for $A_n = \vartheta^n(X_{\vartheta})$ instead of $A$.

The shuffle group has a convenient representation in terms of the family of subshifts ($A_n,S_{A_n}$), with $n \in \N$. 
Recall that if $f \in \Gamma_n$, then for each $a \in \mc A$, there is a permutation $\alpha_a (\vartheta^n (a))$ that replaces each word $v \in \vartheta^n (a)$ in the inflation word decomposition of $x \in X_{\vartheta}$ with $\alpha_a (v)$.
Thus, $f$ acts on $(x_i,v_i)_{i \in \Z}$ as follows:
\[
    f(y) = (x_i, w_i)_{i \in \Z}, \quad w_i = \alpha_{x_i} (v_i).
\]
This is consistent with the earlier definition, in the sense that for such $y$,
\[
f((y,t)) = (f(y),t),
\]
for all $t \in \R$. It follows that $\widetilde{\mu}$ is invariant under $f$ precisely if the same holds for $\mu_{A_n}$. \begin{corollary}
\label{COR:Uniformity_char}
$\mu$ is a uniformity measure if and only if $\mu_{A_n}$ is invariant under $\Gamma_n$ for all $n \in \N$.
\end{corollary}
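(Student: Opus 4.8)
The plan is to unwind the definition of a uniformity measure and to feed it, one level at a time, into the per-shuffle equivalence recorded immediately before the statement. By definition, $\mu$ is a uniformity measure exactly when its lift satisfies $\widetilde{\mu}\circ f = \widetilde{\mu}$ for every $f \in \Gamma$. Since $\Gamma = \bigcup_{n \in \N}\Gamma_n$, this is the same as asking that $\widetilde{\mu}\circ f = \widetilde{\mu}$ for every $n \in \N$ and every $f \in \Gamma_n$. The whole proof then reduces to replacing the condition ``$\widetilde{\mu}$ is fixed by $f$'' by ``$\mu_{A_n}$ is fixed by $f$'' for each $f \in \Gamma_n$ and each $n$.

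The input that makes this replacement legitimate is exactly the equivalence stated just above the corollary: for $f \in \Gamma_n$ one has $\widetilde{\mu}\circ f = \widetilde{\mu}$ if and only if $\mu_{A_n}\circ f = \mu_{A_n}$. I would justify it through the compatibility relation $f((y,t)) = (f(y),t)$ for $y \in A_n$, which identifies the action of $f$ on $Y_\vartheta$ with its action on the cross-section $A_n$ via \Cref{LEM:conjugation}. Because every word in $\vartheta^n(a)$ has the same geometric length $\lambda^n \Length_a$, the map $f$ leaves invariant the roof function obtained by inducing the flow $T$ on $A_n$ and commutes with $T$ itself; hence it corresponds, under the bijection between $S_{A_n}$-invariant measures on $A_n$ and $T$-invariant measures on $Y_\vartheta$, to a flow automorphism, which yields the claimed equivalence.

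With this in hand the argument is a quantifier manipulation. For the forward direction, assume $\mu$ is a uniformity measure, fix $n \in \N$, and take an arbitrary $f \in \Gamma_n \subseteq \Gamma$; then $\widetilde{\mu}\circ f = \widetilde{\mu}$, so $\mu_{A_n}\circ f = \mu_{A_n}$, and letting $f$ range over $\Gamma_n$ shows $\mu_{A_n}$ is $\Gamma_n$-invariant for every $n$. Conversely, given $f \in \Gamma$ I pick $n$ with $f \in \Gamma_n$ and combine $\Gamma_n$-invariance of $\mu_{A_n}$ with the equivalence to obtain $\widetilde{\mu}\circ f = \widetilde{\mu}$; as $f$ was arbitrary, $\mu$ is a uniformity measure. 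I do not expect a genuine obstacle, since the substantive step is already supplied; the only point deserving a moment's care is the nesting $\Gamma_n \subseteq \Gamma_{n+1}$, so that a single shuffle may be read off at several levels---but this causes no ambiguity, because each level's reformulation is equivalent to the one condition $\widetilde{\mu}\circ f = \widetilde{\mu}$.
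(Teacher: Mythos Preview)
Your proposal is correct and is exactly the argument the paper intends: the corollary is stated without a separate proof precisely because it follows by combining $\Gamma=\bigcup_{n}\Gamma_n$ with the sentence immediately preceding it, namely that $\widetilde{\mu}$ is $f$-invariant if and only if $\mu_{A_n}$ is, for each $f\in\Gamma_n$. Your write-up simply makes the quantifier unwinding explicit.
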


\section{Measure transformation}
\label{SEC:measure-transformation}

The assumption of recognisability is consistent with the idea of a desubstitution. We implement this on the level of measures in the following way. Let $\mc M$ denote the space of $S$-invariant Borel probability measures on $X_\vartheta$. For $\nu \in \mc M$, let $\nu_A$ be the corresponding induced measure on $A=\vartheta(X_{\vartheta})$. Using the identification in Lemma~\ref{LEM:conjugation}, the measure $\nu_A$ is determined by its value on cylinder sets of the form $[(a_1,v_1)\cdots(a_n,v_n)]$. We define a measure $\mu = \desub(\nu_A)$ on $X_{\vartheta}$ by collapsing inflation words to letters via
\[
\mu([a_1 \cdots a_n]) = \sum_{v_1,\ldots,v_n} \nu_A([(a_1,v_1) \cdots (a_n,v_n)]),
\]
corresponding to a projection to the first coordinate. By construction, this gives an $S$-invariant probability measure that depends continuously on $\nu$.
Hence, the map $\Pi \colon \mc M \to \mc M, \, \nu \mapsto \desub(\nu_A)$ is a (weakly) continuous operator. The map $\Pi$ turns out to be surjective, but is generally not injective, since the  map $\desub$ can fail to be injective. We will use the random substitution action to construct appropriate inverse branches.

The action of $\vartheta_{\mathbf{P}}$ is given by a Markov kernel, replacing each letter independently. Under this transition, a given shift-invariant measure $\mu$ on $X_{\vartheta}$ is replaced by an $S_A$-invariant measure $\nu = \vartheta_{\mathbf{P}}(\mu)$ on $A = \vartheta(X_{\vartheta})$. Using the identification in Lemma~\ref{LEM:conjugation}, it can be explicitly expressed as
\begin{equation}
\label{EQ:substitution-on-measure}
(\vartheta_\mathbf{P}(\mu))([(a_1,v_1)\cdots(a_n,v_n)]) = \mu([a_1 \cdots a_n]) \mathbb{P}[\vartheta_{\mathbf{P}}(a_1 \cdots a_n) = v_1 \cdots v_n],
\end{equation}
for all legal blocks $a_1 \cdots a_n$ and $v_i \in \vartheta(a_i)$. Indeed, this represents an $S_A$-invariant probability measure on $A$.
Note that by construction, we have that $\desub(\vartheta_\mathbf{P}(\mu)) = \mu$, that is, $\desub$ is a left-inverse of $\vartheta_{\mathbf{P}}$.

\begin{lemma}
\label{LEM:substitution-image-ergodic}
The measure $\vartheta_{\mathbf{P}}(\mu)$ is $S_A$-ergodic if and only if $\mu$ is $S$-ergodic.
\end{lemma}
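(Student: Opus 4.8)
The plan is to work throughout on the conjugate system $(Z_\vartheta,S)$ rather than $(A,S_A)$, using the topological conjugacy $\psi$ from \Cref{LEM:conjugation}. Since $\psi$ intertwines $S_A$ with $S$, ergodicity of $\vartheta_{\mathbf P}(\mu)$ as an $S_A$-invariant measure on $A$ is equivalent to ergodicity of its image under $\psi$ as an $S$-invariant measure on $Z_\vartheta$, which (abusing notation) I continue to denote $\nu=\vartheta_{\mathbf P}(\mu)$. Write $p\colon Z_\vartheta\to X_\vartheta$ for the projection factor map $(x_i,v_i)_{i\in\Z}\mapsto (x_i)_{i\in\Z}$; it commutes with the shift, and by the definition of $\desub$ it satisfies $p_*\nu=\desub(\nu)=\mu$. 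One direction is then immediate: if $\nu$ is $S$-ergodic, then its factor $\mu=p_*\nu$ is $S$-ergodic as well.

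For the converse I would assume $\mu$ is $S$-ergodic and verify ergodicity of $\nu$ by checking Cesàro-independence on a generating family of cylinders. The key structural input is the product form of \eqref{EQ:substitution-on-measure}: using $\mathbf{P}_{v,u}=\prod_i \mathbf{P}_{v_i,u_i}$ on words, the label $v_i$ at each position depends only on the base letter $x_i$, and conditional on the base sequence the labels are independent across positions. Concretely, for cylinders $C=[(a_1,v_1)\cdots(a_m,v_m)]$ and $D=[(b_1,w_1)\cdots(b_n,w_n)]$ and any $k\geqslant m$, the constrained position sets $\{0,\dots,m-1\}$ and $\{k,\dots,k+n-1\}$ are disjoint, so conditional independence of the labels yields the factorisation
\[
\nu\bigl(C\cap S^{-k}D\bigr)=\Bigl(\textstyle\prod_{j}\mathbf{P}_{v_j,a_j}\Bigr)\Bigl(\textstyle\prod_{j}\mathbf{P}_{w_j,b_j}\Bigr)\,\mu\bigl([a_1\cdots a_m]\cap S^{-k}[b_1\cdots b_n]\bigr).
\]

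With this identity the conclusion follows by averaging. By $S$-ergodicity of $\mu$, the Cesàro means of $\mu([a_1\cdots a_m]\cap S^{-k}[b_1\cdots b_n])$ converge to $\mu([a_1\cdots a_m])\,\mu([b_1\cdots b_n])$, and the finitely many terms with $k<m$ do not affect this limit. Since the two scalar products of $\mathbf{P}$-entries equal $\nu(C)$ and $\nu(D)$ divided by the respective base-cylinder masses (with the degenerate case of vanishing base mass being trivial), the Cesàro means of $\nu(C\cap S^{-k}D)$ converge to $\nu(C)\,\nu(D)$. As cylinders of this form generate the Borel $\sigma$-algebra of $Z_\vartheta$ and their indicators span a dense subspace of $L^2(\nu)$, a standard approximation argument upgrades this to Cesàro-independence for all pairs in $L^2(\nu)$, which is equivalent to $S$-ergodicity of $\nu$.

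I expect the only genuine subtlety to be bookkeeping rather than conceptual: one must confirm that the label distributions on the two disjoint blocks are truly conditionally independent given the base, so that the $\mathbf{P}$-factors pull out uniformly over all admissible base configurations and leave exactly the two-point correlation $\mu([a_1\cdots a_m]\cap S^{-k}[b_1\cdots b_n])$, and one must justify passing from cylinders to $L^2(\nu)$. Both steps are routine once \eqref{EQ:substitution-on-measure} and the product structure of $\mathbf{P}$ are in place, so the substantive work is simply recording the factorisation carefully.
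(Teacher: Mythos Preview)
Your proposal is correct and follows essentially the same route as the paper: both establish the key factorisation $\nu(C\cap S^{-k}D)=\bigl(\prod_j\mathbf{P}_{v_j,a_j}\bigr)\bigl(\prod_j\mathbf{P}_{w_j,b_j}\bigr)\mu([a]\cap S^{-k}[b])$ for large $k$ and then appeal to Ces\`aro-independence on cylinders. The only notable difference is that for the direction ``$\nu$ ergodic $\Rightarrow$ $\mu$ ergodic'' you invoke the factor map $p_*\nu=\mu$, whereas the paper instead sums the factorisation over all $u\in\vartheta(x)$ and $v\in\vartheta(y)$ to recover the Ces\`aro relation for $\mu$ directly; your argument is slightly cleaner here but not materially different.
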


\begin{proof}
Recall that the ergodicity of $\mu$ is equivalent to 
\[
 \lim_{n \to \infty} \frac{1}{n} \sum_{k=0}^{n-1} \mu([x] \cap S^{-k}[y]) = \mu([x]) \mu([y]),
\]
for all $x,y \in \mc A^+$. Note that
\[
\mu([x] \cap S^{-k} [y]) = \sum_{z \in \mc A^{k-|x|}} \mu( [xzy]), 
\]
as soon as $k \geqslant |x|$. Suppose $u \in \vartheta(x)$ and $v \in \vartheta(y)$. Given a word $z \in \mc A^r$ and $w \in \vartheta(z)$, let $z_w = (z_1,w_1) \cdots (z_r,w_r)$, where $w = w_1 \cdots w_r$ is the unique decomposition with $w_i \in \vartheta(x_i)$ for all $1 \leqslant r$. 
With this notation, we obtain for $k \geqslant r$ that
\begin{align*}
\vartheta_{\mathbf{P}}(\mu)([x_u] \cap S_A^{-k}[y_v]) 
&= \sum_{z \in \mc A^{k-r}, w \in \vartheta(z)}
\vartheta_{\mathbf{P}}(\mu)([x_u z_w y_v])
\\ &= \sum_{z \in \mc A^{k-r}, w \in \vartheta(z)}
\mu([xzy]) \mathbb{P}[\vartheta_{\mathbf{P}}(xzy) = uwv]
\\ &= \mathbb{P}[\vartheta_{\mathbf{P}}(x) = u] \mathbb{P}[\vartheta_{\mathbf{P}}(y) = v] \, \mu([x] \cap S^{-k} [y])
\end{align*}
Hence, if $\mu$ is ergodic, we obtain for all $x_u$ and $y_v$ that
\begin{align*}
 \lim_{n \to \infty} \frac{1}{n} \sum_{k=0}^{n-1} \vartheta_{\mathbf{P}}(\mu)([x_u] \cap S_A^{-k}[y_v]) 
 &= \mathbb{P}[\vartheta_{\mathbf{P}}(x) = u] \mathbb{P}[\vartheta_{\mathbf{P}}(y) = v] \mu([x]) \mu([y])
\\ &= \vartheta_{\mathbf{P}}(\mu) ([x_u]) \, \vartheta_{\mathbf{P}}(\mu) ([y_v]),
\end{align*}
implying ergodicity of $\vartheta_{\mathbf{P}}(\mu)$. Conversely, if $\vartheta_{\mathbf{P}}(\mu)$ is ergodic, taking the sum over all $u \in \vartheta(x)$ and $v \in \vartheta(y)$ in the relation
\[
 \lim_{n \to \infty} \frac{1}{n} \sum_{k=0}^{n-1} \vartheta_{\mathbf{P}}(\mu)([x_u] \cap S_A^{-k}[y_v]) 
=\vartheta_{\mathbf{P}}(\mu) ([x_u]) \, \vartheta_{\mathbf{P}}(\mu) ([y_v])
\]
yields the ergodicity of $\mu$.
\end{proof}

We note that the integral of $r_A$ with respect to $\vartheta_{\mathbf{P}}(\mu)$ is given by
\[
\lambda_{\mu,\mathbf{P}} = \int_{A} r_A \dd \vartheta_{\mathbf{P}}(\mu)
= \sum_{(a,v) \in \mc B} \vartheta_{\mathbf{P}}(\mu)([(a,v)]) |v|
= \sum_{a \in \mc A} \mu([a]) \mathbb{E}[ |\vartheta_{\mathbf{P}}(a)| ].
\]
We call $\lambda_{\mu,\bm P}$ the \emph{normalisation constant} with respect to the defining data $\mu,\bm P$.
Using this normalisation factor, $\vartheta_{\mathbf{P}}(\mu)$ can be drawn back to an $S$-invariant measure $\trans_{\mathbf{P}}(\mu)$ in a canonical way. This is achieved by taking the average over all shifts prior to the first return to the set $A$, and normalising with the expected first return time, given by $\lambda_{\mu, \mathbf{P}}$.

\begin{definition}
Let $\mu \in \mc M$ and $\mathbf{P}$ a choice of probabilities for the random substitution $\vartheta$. The \emph{$\mathbf{P}$-transfer} of $\mu$ is the measure defined by
\[
\trans_{\mathbf{P}}(\mu)(f) 
= \lambda_{\mu, \mathbf{P}}^{-1} \vartheta_{\mathbf{P}}(\mu) \left(  \sum_{i=0}^{r_A -1} f \circ S^i \right),
\]
for all continuous functions $f$ on $X_{\vartheta}$.
\end{definition}
 Indeed, by Kac's formula, this is the only possible candidate for a shift-invariant measure with induced measure $\vartheta_{\mathbf{P}}(\mu)$ on $(A,S_A)$. That $\trans_{\mathbf{P}}(\mu)$ indeed defines a Borel measure follows easily from the Riesz--Markov--Kakutani representation theorem. Normalisation is checked by choosing $f \equiv 1$. Finally, $S$-invariance of $\trans_{\mathbf{P}}(\mu)$ follows in a straightforward manner from the $S_A$-invariance of $\vartheta_{\mathbf{P}}(\mu)$.

 \begin{lemma}
The measure $\trans_{\mathbf{P}}(\mu)$ is ergodic if and only if $\mu$ is ergodic.
 \end{lemma}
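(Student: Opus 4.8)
The plan is to reduce the statement to \Cref{LEM:substitution-image-ergodic} by passing through the induced system. By construction, and as noted right after the definition of the transfer, Kac's formula guarantees that $\trans_{\mathbf{P}}(\mu)$ is the unique $S$-invariant probability measure on $X_\vartheta$ whose induced measure on $(A,S_A)$ equals $\vartheta_{\mathbf{P}}(\mu)$; that is, $\trans_{\mathbf{P}}(\mu)_A = \vartheta_{\mathbf{P}}(\mu)$. Hence it suffices to prove the abstract statement that an $S$-invariant measure is ergodic precisely when its induced measure on $A$ is $S_A$-ergodic, and then to combine this with \Cref{LEM:substitution-image-ergodic}, which already identifies $S_A$-ergodicity of $\vartheta_{\mathbf{P}}(\mu)$ with $S$-ergodicity of $\mu$.

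For the abstract equivalence, write $\nu = \trans_{\mathbf{P}}(\mu)$ and $\nu_A = \vartheta_{\mathbf{P}}(\mu)$, and recall that the return time $r_A$ is bounded. I would set up the Kakutani tower representation of $(X_\vartheta,S,\nu)$ over $(A,S_A,\nu_A)$: every point is written as $S^j y$ with $y \in A$ and $0 \leqslant j < r_A(y)$, and applying Kac's formula with $f \equiv 1$ shows that this tower carries full $\nu$-measure. The crux is then the correspondence between invariant sets. If $E \subseteq X_\vartheta$ is $S$-invariant, then $E \cap A$ is $S_A$-invariant, since $S_A$ and $S_A^{-1}$ act by iterates of $S^{\pm 1}$ that return to $A$. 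Conversely, if $B \subseteq A$ is $S_A$-invariant, its saturation $\widehat{B} = \{ S^j y : y \in B,\ 0 \leqslant j < r_A(y) \}$ is $S$-invariant and satisfies $\widehat{B} \cap A = B$; this is a direct check using $S_A y \in B$ and $S_A^{-1} y \in B$ to handle, respectively, the top and bottom of each fibre. These two operations are mutually inverse modulo $\nu$-null sets, so $S$-invariant sets and $S_A$-invariant sets are in bijective correspondence.

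To transfer the measure conditions across this correspondence, I would apply Kac's formula to $f = \mathds{1}_E$ for an $S$-invariant set $E$. Since $E$ is a union of full fibres of the tower, backward and forward invariance give $\sum_{i=0}^{r_A - 1} \mathds{1}_E \circ S^i = r_A \, \mathds{1}_{E \cap A}$ on $A$, whence
\[
\nu(E) = \frac{1}{\lambda_{\mathbf{P},\mu}} \int_{E \cap A} r_A \dd \nu_A .
\]
Because $1 \leqslant r_A \leqslant r_{\max}$, this forces $\nu(E) = 0$ exactly when $\nu_A(E \cap A) = 0$, and $\nu(E) = 1$ exactly when $\nu_A(E \cap A) = \nu_A(A) = 1$. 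Consequently every $S$-invariant set is $\nu$-trivial if and only if every $S_A$-invariant set is $\nu_A$-trivial, i.e.\ $\nu$ is $S$-ergodic if and only if $\nu_A$ is $S_A$-ergodic. Combined with \Cref{LEM:substitution-image-ergodic}, this yields the claim.

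The step I expect to be the main obstacle is making the invariant-set correspondence rigorous modulo null sets: one must verify that $\nu$-almost every point lies in the tower over $A$ and that the saturation and restriction operations are genuinely inverse up to $\nu$-negligible sets. This is precisely where boundedness of $r_A$ enters, both to guarantee the tower has full measure (via Kac) and to control backward returns to $A$; the remaining measure-theoretic bookkeeping is routine.
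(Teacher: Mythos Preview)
Your proposal is correct and follows essentially the same route as the paper: reduce to the equivalence between $S$-ergodicity of $\nu=\trans_{\mathbf{P}}(\mu)$ and $S_A$-ergodicity of its induced measure $\nu_A=\vartheta_{\mathbf{P}}(\mu)$, and then invoke \Cref{LEM:substitution-image-ergodic}. The only difference is one of detail: the paper's proof is a two-line appeal to the standard fact (partly recorded in the preliminaries, with reference to Sarig) that a measure is ergodic if and only if its induced measure is, whereas you spell out this equivalence via the Kakutani tower and Kac's formula. Your worry about null sets is in fact moot here, since recognisability makes the tower decomposition $X_\vartheta=\bigsqcup_{j} S^j\{y\in A: r_A(y)>j\}$ hold pointwise, not just almost everywhere.
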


 \begin{proof}
     The ergodicity of $\trans_{\mathbf{P}}(\mu)$ is equivalent to the ergodicity of its induced measure $\vartheta_{\mathbf{P}}(\mu)$. By \Cref{LEM:substitution-image-ergodic} this is in turn equivalent to $\mu$ being ergodic.
 \end{proof}

 \begin{lemma}
 \label{LEM:continuous-transfer-operator}
For every choice of $\mathbf{P}$, the operator $\trans_{\mathbf{P}} \colon \mc M \to \mc M$ is continuous with respect to the topology of weak convergence.
\end{lemma}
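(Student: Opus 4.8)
The plan is to reduce weak continuity to the explicit cylinder formula \eqref{EQ:substitution-on-measure} and to test convergence against a dense family of continuous functions. Fix a sequence $\mu_k \to \mu$ weakly in $\mc M$. Since $X_\vartheta$ is a subshift, cylinder sets are clopen and finite linear combinations of their indicators are dense in $C(X_\vartheta)$; for probability measures this makes weak convergence equivalent to $\mu_k([w]) \to \mu([w])$ for every $w \in \mc L_\vartheta$. The goal is then to show $\trans_{\mathbf{P}}(\mu_k)(f) \to \trans_{\mathbf{P}}(\mu)(f)$ for every fixed $f \in C(X_\vartheta)$.

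First I would establish that the auxiliary map $\mu \mapsto \vartheta_{\mathbf{P}}(\mu)$ is weakly continuous into the measures on $(A,S_A) \cong (Z_\vartheta,S)$. Using the identification of \Cref{LEM:conjugation}, a cylinder in $Z_\vartheta$ has the form $[(a_1,v_1)\cdots(a_n,v_n)]$, and \eqref{EQ:substitution-on-measure} expresses its $\vartheta_{\mathbf{P}}(\mu)$-measure as $\mu([a_1\cdots a_n])\,\mathbb{P}[\vartheta_{\mathbf{P}}(a_1\cdots a_n) = v_1\cdots v_n]$. The probability factor is independent of $\mu$, so cylinder-wise convergence follows at once from $\mu_k([a_1\cdots a_n]) \to \mu([a_1\cdots a_n])$. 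As $Z_\vartheta$ is again a subshift, convergence on all cylinders upgrades to $\vartheta_{\mathbf{P}}(\mu_k) \to \vartheta_{\mathbf{P}}(\mu)$ weakly on $A$.

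Next I would treat the normalisation and the integrand separately. The factor $\lambda_{\mu,\mathbf{P}} = \sum_{a \in \mc A} \mu([a])\,\mathbb{E}[|\vartheta_{\mathbf{P}}(a)|]$ is a finite $\mu$-linear combination of letter frequencies, so $\lambda_{\mu_k,\mathbf{P}} \to \lambda_{\mu,\mathbf{P}}$; since $r_A \geq 1$ we have $\lambda_{\mu,\mathbf{P}} \geq 1$, whence $\mu \mapsto \lambda_{\mu,\mathbf{P}}^{-1}$ is continuous and uniformly bounded. For the fixed $f$, set $g = \sum_{i=0}^{r_A-1} f \circ S^i$ on $A$. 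Because return times are bounded and the level sets $\{r_A = j\}$ are clopen (the return time equals $|v_0|$, hence depends only on the $0$-th coordinate in $Z_\vartheta$), $g$ is a finite sum of continuous functions on each clopen piece; thus $g \in C(A)$, and it does not depend on $k$.

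Combining these, weak convergence of $\vartheta_{\mathbf{P}}(\mu_k)$ applied to the fixed function $g \in C(A)$ gives $\vartheta_{\mathbf{P}}(\mu_k)(g) \to \vartheta_{\mathbf{P}}(\mu)(g)$, and therefore
\[
\trans_{\mathbf{P}}(\mu_k)(f) = \lambda_{\mu_k,\mathbf{P}}^{-1}\,\vartheta_{\mathbf{P}}(\mu_k)(g) \longrightarrow \lambda_{\mu,\mathbf{P}}^{-1}\,\vartheta_{\mathbf{P}}(\mu)(g) = \trans_{\mathbf{P}}(\mu)(f).
\]
As $f$ was arbitrary, this yields the claimed weak continuity. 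The main obstacle is precisely the passage from cylinder-wise convergence to genuine weak convergence of $\vartheta_{\mathbf{P}}(\mu_k)$, together with the continuity — not merely measurability — of the return-time-weighted integrand $g$; both rest on the totally disconnected structure of the subshifts and on the boundedness and local constancy of $r_A$, after which the conclusion is a routine product-of-limits computation.
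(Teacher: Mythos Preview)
Your proof is correct and follows essentially the same approach as the paper: continuity of $\mu \mapsto \vartheta_{\mathbf{P}}(\mu)$ from the explicit cylinder formula, continuity of the integrand $g=\sum_{i=0}^{r_A-1} f\circ S^i$ via continuity of $r_A$, and the normalisation $\lambda_{\mu,\mathbf{P}}$ being bounded away from $0$. The paper's version is simply terser, citing recognisability directly for the continuity of $r_A$ where you spell out the local constancy on $Z_\vartheta$.
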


\begin{proof}
The fact that the action of $\vartheta_\mathbf{P}$ is continuous follows in a straightforward manner from \eqref{EQ:substitution-on-measure}. 
Due to recognisability the first return map $r_A$ is continuous on $X_{\vartheta}$. Hence, for every continuous function $f$, the function $f_A = \sum_{i=0}^{r_A - 1} f\circ S^i$ is continuous on $A$. Therefore, $\vartheta_{\mathbf{P}}(\mu)(f_A)$ depends continuously on $\mu$ and the same holds for its normalisation $\lambda_{\mathbf{P},\mu}$, which is bounded away from $0$.
\end{proof}

\begin{definition}
 For $\nu \in \mc M$, let $\mathbf{P}(\nu)$ be the set of probability choices $\mathbf{P}$ satisfying \[
\nu_A([(a,v)]) =
\mathbf{P}_{v,a} \sum_{u \in \vartheta(a)} \nu_A([(a,u)]),
\]
for all $v \in \vartheta(a)$.
Given probability data $\mathbf{P}$, we let 
\[
\mc M[\mathbf{P}] = \{ \nu \in \mc M : \mathbf{P} \in \mathbf{P}(\nu) \}.
\]
\end{definition}

\begin{remark}
    Note that for all $\mathbf{P} \in \mathbf{P}(\nu)$ and $a \in \mc A$, the vector $\mathbf{P}^a = (\mathbf{P}_{v,a})_{v \in \vartheta(a)}$ is uniquely determined, as soon as $\Pi(\nu)([a])=\sum_{u \in \vartheta(a)} \nu_A([(a,u)]) > 0$, and it is completely arbitrary otherwise. In particular, $\mathbf{P}(\nu)$ is a singleton precisely if $\Pi(\nu)$ is non-vanishing on cylinders of length $1$.
\end{remark}

Recall that $R^\mu$ is the letter frequency vector defined by
$
R^\mu_a = \mu([a]),
$
for all $a \in \mc A$.
For notational convenience, we write $\mu \sim \nu$ for $\mu,\nu \in \mc M$ if $R^\mu = R^\nu$.
The induced measure of $\trans_{\mathbf{P}}(\mu)$ is given by $\vartheta_{\mathbf{P}}(\mu)$, which in turn is mapped back to $\mu$ under $\desub$. Hence, we observe that
 \[
 \Pi \trans_{\mathbf{P}}(\mu) = \mu,
 \]
 for all invariant probability measures $\mu$. From this, it follows that $\trans_{\mathbf{P}}(\mu) \in \mc M[\mathbf{P}] \cap \Pi^{-1}(\mu)$. All measures in this set have the same letter frequencies, as we show below. 
 Given probability data $\mathbf{P}$, recall that $M(\mathbf{P})$ is the substitution matrix of $\vartheta_{\mathbf{P}}$. 

\begin{lemma}
\label{LEM:letter-frequency-uniformity}
Given $\mu \in \mc M$ and some probability data $\mathbf{P}$, all measures $\nu \in \mc M[\mathbf{P}] \cap \Pi^{-1}(\mu)$ share the same letter frequency vector $R^\nu$, given by 
\[
R^\nu = \lambda_{\mu,\mathbf{P}}^{-1} M(\mathbf{P}) R^\mu.
\]
In particular, this applies to $\nu = \trans_{\mathbf{P}}(\mu)$.
Also, $\mu \sim \mu'$ implies that $\trans_{\mathbf{P}}(\mu) \sim \trans_{\mathbf{P}}(\mu') $ for all $\mathbf{P}$.
\end{lemma}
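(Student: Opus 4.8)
The plan is to compute the letter frequencies $R^\nu_a = \nu([a])$ directly through Kac's formula on the induced system $(A, S_A, \nu_A)$, and to show that the two constraints $\Pi(\nu) = \mu$ and $\nu \in \mc M[\mathbf{P}]$ together determine $\nu_A$ on the one-letter cylinders of $\mc B$ entirely in terms of $R^\mu$ and $\mathbf{P}$. Since the quantities entering $R^\nu$ will depend only on these one-letter cylinder values, the frequency vector is forced to be the same for every $\nu$ in $\Pi^{-1}(\mu) \cap \mc M[\mathbf{P}]$.

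First I would apply the normalised Kac formula to the indicator $\mathds{1}_{[a]}$, giving
\[
\nu([a]) = \frac{1}{\nu_A(r_A)} \int_A \sum_{i=0}^{r_A - 1} \mathds{1}_{[a]} \circ S^i \dd \nu_A.
\]
Under the identification of \Cref{LEM:conjugation}, a point $y \in A$ corresponds to $(x_i, v_i)_{i \in \Z}$ with first return time $r_A(y) = |v_0|$, and the orbit segment $y, Sy, \ldots, S^{r_A(y)-1}y$ successively exposes the letters of the inflation word $v_0$ at the origin. Hence the summand evaluated at $y$ equals $|v_0|_a$, so that
\[
\int_A \sum_{i=0}^{r_A-1} \mathds{1}_{[a]} \circ S^i \dd \nu_A = \sum_{(b,v) \in \mc B} \nu_A([(b,v)]) \, |v|_a, \qquad \nu_A(r_A) = \sum_{(b,v) \in \mc B} \nu_A([(b,v)]) \, |v|.
\]

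The key step is to evaluate $\nu_A([(b,v)])$. Membership in $\mc M[\mathbf{P}]$ gives $\nu_A([(b,v)]) = \mathbf{P}_{v,b} \sum_{u \in \vartheta(b)} \nu_A([(b,u)])$, while $\Pi(\nu) = \mu$ forces $\sum_{u \in \vartheta(b)} \nu_A([(b,u)]) = \mu([b]) = R^\mu_b$ by the definition of $\desub$. Combining these yields $\nu_A([(b,v)]) = \mathbf{P}_{v,b} R^\mu_b$, which is independent of the particular choice of $\nu$. Substituting back and recognising $\sum_{v \in \vartheta(b)} \mathbf{P}_{v,b} |v|_a = M(\mathbf{P})_{ab}$ and $\sum_{v \in \vartheta(b)} \mathbf{P}_{v,b} |v| = \mathbb{E}[|\vartheta_\mathbf{P}(b)|]$, the numerator becomes $(M(\mathbf{P}) R^\mu)_a$ and the denominator becomes $\sum_b R^\mu_b \mathbb{E}[|\vartheta_\mathbf{P}(b)|] = \lambda_{\mu,\mathbf{P}}$. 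This gives $R^\nu = \lambda_{\mu,\mathbf{P}}^{-1} M(\mathbf{P}) R^\mu$, and since $\trans_\mathbf{P}(\mu) \in \Pi^{-1}(\mu) \cap \mc M[\mathbf{P}]$ was observed just above, the formula applies to it in particular.

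For the final assertion, I would note that both $\lambda_{\mu,\mathbf{P}} = \sum_a R^\mu_a \mathbb{E}[|\vartheta_\mathbf{P}(a)|]$ and $M(\mathbf{P}) R^\mu$ depend on $\mu$ only through $R^\mu$; thus $\mu \sim \mu'$ (that is, $R^\mu = R^{\mu'}$) immediately gives $R^{\trans_\mathbf{P}(\mu)} = R^{\trans_\mathbf{P}(\mu')}$, i.e.\ $\trans_\mathbf{P}(\mu) \sim \trans_\mathbf{P}(\mu')$. I expect the only genuinely delicate point to be the combinatorial bookkeeping identifying $\sum_{i=0}^{r_A-1} \mathds{1}_{[a]} \circ S^i$ with $|v_0|_a$ under the conjugacy; once the two defining constraints are substituted, the remaining manipulations are routine.
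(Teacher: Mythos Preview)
Your proposal is correct and follows essentially the same route as the paper: both combine the constraint $\Pi(\nu)=\mu$ with the defining relation of $\mc M[\mathbf{P}]$ to obtain $\nu_A([(b,v)]) = \mathbf{P}_{v,b}\,\mu([b])$, then feed this into Kac's formula to compute $\nu([a])$ and the normalisation $\nu_A(r_A)=\lambda_{\mu,\mathbf{P}}$, and finally recognise the sum $\sum_{v\in\vartheta(b)}\mathbf{P}_{v,b}|v|_a$ as $M(\mathbf{P})_{ab}$. The only cosmetic difference is ordering (the paper computes $\nu_A([(b,v)])$ and the normalisation first, then applies Kac), and the bookkeeping you flag as delicate is handled identically in the paper via $\phi_a(v)$.
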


\begin{proof}
First, note that $\mu = \Pi(\nu) = \desub(\nu_A)$ satisfies
\[
\mu([b]) = \sum_{v \in \vartheta(b)} \nu_A([(b,v)]),
\]
and hence $\nu_A([(b,v)]) = \mathbf{P}_{v,b} \mu([b])$, by the defining relation for $\mathbf{P} \in \mathbf{P}(\nu)$. Hence, for $A = \vartheta(X_{\vartheta})$,
\[
\nu(A)^{-1} = \int_{A} r_A \dd \nu_A
= \sum_{(a,v) \in \mc B} \nu_A([(a,v)]) |v|
= \sum_{a \in \mc A} \mu([a]) \sum_{v \in \vartheta(a)} \mathbf{P}_{v,a} |v| = \lambda_{\mu,\mathbf{P}}.
\]
Combining these observations with Kac's formula yields
\[
\nu([a]) = \frac{1}{\lambda_{\mu,\mathbf{P}}} \sum_{(b,v) \in \mc B} \nu_A([b,v]) \phi_a(v)
= \frac{1}{\lambda_{\mu,\mathbf{P}}} \sum_{b \in \mc A} \mu([b])
\sum_{v \in \vartheta(a)} \mathbf{P}_{v,b} \phi_a(v),
\]
for all $a \in \mc A$. Since the entries of $M(\mathbf{P})$ are given by
\[
M(\mathbf{P})_{ab} = \mathbb{E}_{\mathbf{P}}\phi_a(\vartheta(b))
= \sum_{v \in \vartheta(b)} \mathbf{P}_{v,b} \phi_a(v),
\]
this shows the stated formula for $R^\nu$. For the final claim, we apply this to $\nu = \trans_{\mathbf{P}}(\mu)$ and observe that the letter frequencies of this measure depend only on $\mathbf{P}$ and $R^\mu$.
\end{proof}

The updating rule for the frequency vector under $\trans_{\mathbf{P}}$ takes an even easier form in the geometric setting. Recall that the interval proportion vector $\pi^\mu$ is given by
$
\pi^\mu_a = \Length_a R^\mu_a/\Length R^\mu.
$
We emphasise that the relation between $R^\mu$ and $\pi^\mu$ is one-to-one, since
$
R^\mu_a = \Length^{-1}_a \pi^\mu_a/(\sum_{b \in \mc A} \Length^{-1}_b \pi^\mu_b).
$

\begin{corollary}
\label{COR:interval-proportion-transformation}
    The interval proportion vectors of $\nu = \trans_{\mathbf{P}}(\mu)$ and $\mu$ are related by
    \[
    \pi^\nu = Q(\mathbf{P}) \pi^\mu,
    \]
    where $Q(\mathbf{P})$ is the geometric substitution matrix of $\vartheta_{\mathbf{P}}$.
\end{corollary}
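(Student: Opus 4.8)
The plan is to obtain the identity by a direct computation, chaining together the update rule for letter frequencies established in \Cref{LEM:letter-frequency-uniformity} with the definitions of the interval proportion vector and of the geometric substitution matrix. The only structural input beyond these definitions is the geometric compatibility of $\vartheta$, which guarantees that $\Length$ is a left eigenvector of $M = M(\mathbf{P})$ with eigenvalue $\lambda$, i.e.\ $\sum_a \Length_a M_{ab} = \lambda \Length_b$ for every $b \in \mc A$ and every probability choice $\mathbf{P}$.

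First I would record that, by \Cref{LEM:letter-frequency-uniformity} applied to $\nu = \trans_{\mathbf{P}}(\mu)$, the letter frequency vectors are related by $R^\nu = \lambda_{\mu,\mathbf{P}}^{-1} M(\mathbf{P}) R^\mu$. Substituting this into the definition $\pi^\nu_a = \Length_a R^\nu_a / \Length R^\nu$ expresses the numerator as $\lambda_{\mu,\mathbf{P}}^{-1} \Length_a \sum_b M_{ab} R^\mu_b$. To handle the normalising denominator, I would compute $\Length R^\nu = \lambda_{\mu,\mathbf{P}}^{-1} \sum_b (\sum_a \Length_a M_{ab}) R^\mu_b$ and invoke geometric compatibility to replace $\sum_a \Length_a M_{ab}$ by $\lambda \Length_b$, obtaining $\Length R^\nu = \lambda_{\mu,\mathbf{P}}^{-1} \lambda\, \Length R^\mu$.

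The point at which the computation becomes clean is the cancellation: the factor $\lambda_{\mu,\mathbf{P}}^{-1}$ appears in both numerator and denominator and drops out, leaving $\pi^\nu_a = \Length_a (\sum_b M_{ab} R^\mu_b)/(\lambda\, \Length R^\mu)$. Rewriting $\Length_a M_{ab}/(\lambda \Length_b)$ as the matrix entry $Q(\mathbf{P})_{ab}$ and $\Length_b R^\mu_b / \Length R^\mu$ as $\pi^\mu_b$ identifies the right-hand side with $(Q(\mathbf{P}) \pi^\mu)_a$, which is the claimed relation.

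I do not anticipate a genuine obstacle here; the statement is essentially a bookkeeping consequence of \Cref{LEM:letter-frequency-uniformity}. The one conceptual subtlety worth flagging is why the symbolic normalising constant $\lambda_{\mu,\mathbf{P}}$ from Kac's formula disappears and is replaced by the geometric inflation factor $\lambda$: this is precisely the effect of weighting letters by their geometric lengths and then applying the left-eigenvector relation, and it is the reason the update rule takes the uniform, $\mu$-independent Markov form $\pi^\nu = Q(\mathbf{P}) \pi^\mu$ in the geometric setting, in contrast to the $\mu$-dependent normalisation still present in the rule for $R^\nu$.
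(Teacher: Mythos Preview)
Your proposal is correct and follows essentially the same route as the paper's own proof: both invoke \Cref{LEM:letter-frequency-uniformity} for $R^\nu$, use the left-eigenvector relation $\Length M(\mathbf{P}) = \lambda \Length$ to obtain $\lambda_{\mu,\mathbf{P}} \Length R^\nu = \lambda \Length R^\mu$, and then carry out the same cancellation and regrouping to identify $Q(\mathbf{P})_{ab}$ and $\pi^\mu_b$. Your additional commentary on why $\lambda_{\mu,\mathbf{P}}$ is replaced by $\lambda$ is a nice conceptual gloss but not a departure from the paper's argument.
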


\begin{proof}
First note that, due to \Cref{LEM:letter-frequency-uniformity},
\begin{equation}
\label{EQ:inflation-factors-relation}
\lambda_{\mu,\mathbf{P}} \Length R^\nu =  \Length M(\mathbf{P}) R^\mu = \lambda \Length R^\mu.
\end{equation}
Hence, 
\[
\pi^\nu_a = \frac{\Length_a R^\nu_a}{\Length R^\nu} 
= \frac{ \Length_a}{ \lambda_{\mu,\mathbf{P}} \Length R^\nu} \sum_{b \in \mc A} M(\mathbf{P})_{ab} R^\mu_b
= \sum_{b \in \mc A} \frac{\Length_a M(\mathbf{P})_{ab}}{\lambda \Length_b} \frac{\Length_b R^\mu_b}{\Length R^\mu} 
= \sum_{b \in \mc A} Q(\mathbf{P})_{ab} \pi^\mu_b,
\]
as claimed.
\end{proof}

Recall that $\Pi$ is a left-inverse of $\trans_{\mathbf{P}}$, irrespective of $\mathbf{P}$. 
The special role of $\trans_{\mathbf{P}}$ as an inverse branch of $\Pi$ is that it maximises the entropy of all measures with a given inflation word distribution. 

 \begin{prop}
 \label{LEM:entropy-maximising-inverse-branch}
For each $\mu \in \mc M$ and probability data $\mathbf{P}$, the measure $\trans_{\mathbf{P}}(\mu)$ is the unique measure of maximal geometric entropy in $\Pi^{-1}(\mu) \cap \mc M[\mathbf{P}]$, and satisfies
\[
 \hg_{\trans_{\mathbf{P}}(\mu)} = \frac{1}{\lambda} \left( \hg_\mu + \frac{ H_{\mathbf{P}} R^\mu }{\Length R^\mu}\right).
 \]
 \end{prop}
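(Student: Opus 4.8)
The plan is to reduce the maximisation of \emph{geometric} entropy to that of Kolmogorov--Sinai entropy, and then to exploit the product structure of the induced system $(A,S_A) \cong (Z_\vartheta,S)$ from \Cref{LEM:conjugation}. Since \Cref{LEM:letter-frequency-uniformity} shows that every $\nu \in \Pi^{-1}(\mu) \cap \mc M[\mathbf P]$ has the same letter-frequency vector $R^\nu = \lambda_{\mu,\mathbf P}^{-1} M(\mathbf P) R^\mu$, the normalising factor $\Length R^\nu$ is constant across this set. Hence $\hg_\nu = h_\nu(S)/(\Length R^\nu)$ is maximised exactly when $h_\nu(S)$ is, and it suffices to identify the unique maximiser of the measure-theoretic entropy.

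Next I would pass to the induced system. For any $\nu$ in the set we have $\nu(A) = \lambda_{\mu,\mathbf P}^{-1}$ (as computed in the proof of \Cref{LEM:letter-frequency-uniformity}), so Abramov's formula gives $h_\nu(S) = \lambda_{\mu,\mathbf P}^{-1}\, h_{\nu_A}(S_A)$. Under the identification with $(Z_\vartheta,S)$, the coordinate projection $\desub$ is a topological factor onto $(X_\vartheta,S)$ with image measure $\mu$. The Abramov--Rokhlin addition formula then yields
\[
h_{\nu_A}(S_A) = h_\mu(S) + h_{\nu_A}(S_A \mid \desub),
\]
and, writing $U_i$ for the inflation-word (second) coordinate at site $i$, the relative term equals the conditional entropy $\Ent_{\nu_A}(U_0 \mid (U_i)_{i<0}, (X_i)_{i\in\Z})$ of the present fibre symbol given all past fibre symbols and the entire base.

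The heart of the argument is to control this relative term. Since conditioning on more can only decrease entropy (property (5)),
\[
\Ent_{\nu_A}\bigl(U_0 \mid (U_i)_{i<0}, (X_i)_{i\in\Z}\bigr) \leqslant \Ent_{\nu_A}(U_0 \mid X_0) = \sum_{a \in \mc A} R^\mu_a\, \Ent(\mathbf P_{\cdot,a}) = H_{\mathbf P} R^\mu,
\]
where the single-site conditional law is $\mathbf P_{\cdot,a}$ precisely because $\nu \in \mc M[\mathbf P]$. For the equality case I would invoke the equality characterisation for property (5) recorded in the preliminaries: equality forces $U_0$ to be conditionally independent of $\bigl((U_i)_{i<0}, (X_i)_{i\in\Z}\bigr)$ given $X_0$. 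Combined with shift-invariance, this gives $\nu_A\bigl(U_j = v \mid (U_i)_{i<j}, (X_i)_{i\in\Z}\bigr) = \mathbf P_{v,X_j}$ for every $j$, and a chain-rule argument over finite windows then shows that the $U_j$ are conditionally i.i.d.\ given $(X_i)_{i\in\Z}$ with law $\mathbf P_{\cdot,X_j}$. This is exactly the defining relation \eqref{EQ:substitution-on-measure} for $\vartheta_{\mathbf P}(\mu)$, so the maximiser is unique and equals $\trans_{\mathbf P}(\mu)$. I expect this upgrade---from the single conditional-independence statement supplied by the equality case to the full conditionally-i.i.d.\ product structure---to be the main obstacle, alongside the careful identification of the relative entropy with the stated conditional entropy of infinite $\sigma$-algebras.

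Finally, substituting back recovers the formula. With $\nu = \trans_{\mathbf P}(\mu)$ we obtain $h_\nu(S) = \lambda_{\mu,\mathbf P}^{-1}\bigl(h_\mu(S) + H_{\mathbf P} R^\mu\bigr)$, and combining with $\Length R^\nu = \lambda\, \Length R^\mu / \lambda_{\mu,\mathbf P}$ from \eqref{EQ:inflation-factors-relation} the factors of $\lambda_{\mu,\mathbf P}$ cancel, leaving
\[
\hg_{\trans_{\mathbf P}(\mu)} = \frac{h_\mu(S) + H_{\mathbf P} R^\mu}{\lambda\, \Length R^\mu} = \frac{1}{\lambda}\left( \hg_\mu + \frac{H_{\mathbf P} R^\mu}{\Length R^\mu}\right),
\]
as claimed.
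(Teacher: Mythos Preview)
Your proposal is correct and follows the same overall arc as the paper---reduce to maximising $h_{\nu_A}$ via Abramov, split off $h_\mu$, bound the remaining term by $H_{\mathbf P}R^\mu$, and characterise equality---but the implementation differs in one meaningful respect. You invoke the Abramov--Rokhlin addition formula and phrase the fibre contribution as a conditional entropy with respect to infinite $\sigma$-algebras, $\Ent_{\nu_A}(U_0 \mid (U_i)_{i<0},(X_i)_{i\in\Z})$. The paper instead stays entirely at the level of finite blocks: it writes $h_{\nu_A}=\lim_n n^{-1}\Ent_{\nu_A^n}(u)$, splits $\Ent_{\nu_A^n}(u)=\Ent(X_{[1,n]})+\Ent(V_1,\ldots,V_n\mid X_{[1,n]})$, and bounds the second term by $nH_{\mathbf P}R^\mu$ using only properties (1)--(5) of conditional entropy for countable random variables. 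For the equality case, rather than extracting conditional independence from an infinite-$\sigma$-algebra identity, the paper uses a subadditivity trick: if $\Ent(V_1,\ldots,V_n\mid X_{[1,n]})<nH_{\mathbf P}R^\mu-\varepsilon$ for some $n$, then by shift-invariance the defect propagates to all multiples $mn$, forcing strict inequality in the limit.

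What each approach buys: your route is shorter once Abramov--Rokhlin and the infinite-conditioning equality criterion are granted, and it makes the ``conditionally i.i.d.'' structure of the maximiser transparent. The paper's route is more self-contained---it uses nothing beyond the elementary conditional-entropy identities listed in \S2.9, all stated for countable variables---and thereby avoids exactly the two obstacles you flagged (justifying the relative-entropy identification and the equality characterisation for uncountable conditioning). If you keep your version, you should either cite a reference for those two facts or sketch the approximation argument reducing them to the finite case.
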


 \begin{proof}
For $\nu \in \Pi^{-1}(\mu) \cap \mc M[\mathbf{P}]$, let $\nu_A$ be the induced measure on $A=\vartheta(X_{\vartheta})$. By Abramov's formula, we have
$h_{\nu} = \nu(A) h_{\nu_A} = \lambda_{\mu,\mathbf{P}}^{-1} h_{\nu_A}$ and, using \Cref{LEM:letter-frequency-uniformity} as in \eqref{EQ:inflation-factors-relation},
\[
\Length R^\nu = \frac{\lambda}{\lambda_{\mu,\mathbf{P}}} \Length R^\mu,
\]
implying 
\[
\hg_{\nu} = \frac{h_{\nu_A}}{\lambda \Length R^\mu}.
\]
Since the normalisation is fixed, $\nu$ achieves maximal  geometric entropy precisely if $h_{\nu_A}$ is maximal.
The measure $\nu_A$ naturally induces a distribution $\nu_A^n$ on $\mc B^n$ by setting $\nu_A^n(u) = \nu_A([u])$ for each $u \in \mc B^n$. With this notation, and the identity map $\operatorname{Id}_n \colon u \mapsto u$ on $\mc B^n$ we have
\[
h_{\nu_A} = \lim_{n \to \infty} \frac{1}{n} \Ent_{\nu_A^n}(\operatorname{Id}_n)
= \inf_{n \in \N} \frac{1}{n} \Ent_{\nu_A^n}(\operatorname{Id}_n).
\]
We define several random variables on $\mc B^n$, which are defined on $u=(x_i,v_i)_{i=1}^n$ via $X_i(u)=x_i$, $V_i(u) = v_i$ and $X_{[j,k]}(u) = x_j \cdots x_k$ for $1\leqslant j\leqslant k \leqslant n$. 
If not specified otherwise, we fix the distribution $\rho = \nu_A^n$ on $\mc B^n$ and compute entropies with respect to this measure.
Note that $u$ is completely determined by the values of $X_{[1,n]}(u)$ and $(V_i(u))_{i=1}^n$, so we obtain
\[
\Ent_{\nu_A^n}(\operatorname{Id}_n) =  \Ent(V_1,\ldots,V_n,X_{[1,n]}) = \Ent(X_{[1,n]}) + \Ent(V_1,\ldots,V_n|X_{[1,n]}).
\]
The distribution of $X_{[1,n]}$ is given by
\begin{equation}
\label{EQ:rho-on-X-measure}
\rho(\{X_{[1,n]} = x_1 \cdots x_n\}) = \sum_{v_1,\ldots,v_n} \nu_A([(x_1,v_1)\cdots(x_n,v_n)]) = \mu([x_1 \cdots x_n]), 
\end{equation}
using that $\nu \in \Pi^{-1}(\mu)$ implies $\desub(\nu_A) = \mu$ in the last step. Hence, we obtain that
\[
\lim_{n \to \infty} \frac{1}{n} \Ent(X_{[1,n]}) = \inf_{n \to \infty} \frac{1}{n} \Ent(X_{[1,n]}) = h_{\mu},
\]
which is uniform for all $\nu \in \Pi^{-1}(\mu)$. 
We thus focus on the term $\Ent(V_1,\ldots, V_n|X_{[1,n]})$. By standard properties of conditional entropy, we have
\begin{equation}
\label{EQ:entropy-V-split}
\Ent(V_1,\ldots,V_n|X_{[1,n]}) \leqslant \sum_{i=1}^n \Ent(V_i|X_{[1,n]})
\end{equation}
and for each $1\leqslant i \leqslant n$,
\begin{equation}
\label{EQ:entropy-X-split}
\Ent(V_i|X_{[1,n]}) \leqslant \Ent(V_i|X_i).
\end{equation}
The shift-invariance of $\nu_A$ implies that $\Ent(V_i|X_i) = \Ent(V_1|X_1)$ for all $1\leqslant i \leqslant n$, given by
\[
\Ent(V_1|X_1) = \sum_{a \in \mc A}\rho(\{X_1 = a\}) \Ent_{\rho_{\{ X_1 = a\}}}(V_1).
\]
Similarly to before, we have that $\rho(\{ X_1 = a\}) = \mu([a])$ and, provided that $\mu([a])>0$,
\[
\rho_{\{ X_1 =a \}}(\{V_1 = v \}) = \frac{\nu_A([(a,v)])}{\sum_{u \in \vartheta(a)} \nu_A([(a,u)])} = \mathbf{P}_{v,a},
\]
where the last equality follows from $\nu \in \mc M[\mathbf{P}]$. 
Hence, 
$
\Ent_{\rho_{\{ X_1 = a\}}}(V_1) = \Ent(\vartheta_{\mathbf{P}}(a))
$, for such $a \in \mc A$.
If $\mu([a]) = 0$, the set $\{ X_1 = a\}$ has vanishing measure and does not contribute to $\Ent(V_1|X_1)$. Hence,
\[
\Ent(V_1|X_1) = \sum_{a \in \mc A} \mu([a]) \Ent_{\mathbb{P}}(\vartheta_{\mathbf{P}}(a)) = H_{\mathbf{P}} R^\mu.
\]
In summary, we obtain that $\Ent(V_1,\ldots,V_n|X_{[1,n]}) \leqslant n H_{\mathbf{P}} R^\mu$ and thereby
\begin{equation}
\label{EQ:induced-measure-entropy-estimate}
h_{\nu_A} = \lim_{n \to \infty} \frac{1}{n} \bigl( \Ent(X_{[1,n]}) + \Ent(V_1, \ldots, V_n|X_{[1,n]}) \bigr) 
\leqslant h_{\mu} + H_{\mathbf{P}} R^\mu.
\end{equation}
We claim that this inequality is an equality precisely if $\Ent(V_1,\ldots,V_n|X_{[1,n]}) = n H_{\mathbf{P}} R^\mu$ for all $n \in \N$.
That this condition is sufficient for equality in the entropy expression is apparent. Conversely, assume that for some $n \in \N$, we have $\Ent(V_1,\ldots,V_n|X_{[1,n]}) < n H_{\mathbf{P}} R^\mu$. Then, there is $\varepsilon > 0$ such that
$\Ent(V_1,\ldots,V_n|X_{[1,n]}) = n H_{\mathbf{P}} R^\mu - \varepsilon$.
For each $m\in \N$, we obtain that
\begin{align*}
\Ent(V_1,\ldots,V_{mn}|X_{[1,mn]})
&\leqslant \sum_{i=0}^{m-1} \Ent(V_{in+1},\ldots,V_{(i+1)n}|X_{[1,mn]}) 
\\ &\leqslant \sum_{i=0}^{m-1} \Ent(V_{in+1},\ldots,V_{(i+1)n}|X_{[in+1,(i+1)n]}) 
\\ & \leqslant m \Ent(V_1,\ldots,V_n|X_{[1,n]})
\leqslant mn H_{\mathbf{P}} R^\mu- m\varepsilon,
\end{align*}
using the invariance of $\nu_A$ in the penultimate step. This implies
\[
h_{\nu_A} = \lim_{m \to \infty} \frac{1}{mn} \bigl( \Ent(X_{[1,mn]}) + \Ent(V_1, \ldots, V_{mn}|X_{[1,mn]}) \bigr) 
\leqslant h_{\mu} + H_{\mathbf{P}} R^\mu - \frac{\varepsilon}{n},
\]
and the inequality in \eqref{EQ:induced-measure-entropy-estimate} is indeed strict. Note that $\Ent(V_1,\ldots,V_n|X_{[1,n]}) = n H_{\mathbf{P}} R^\mu$ if and only if we have equality in both \eqref{EQ:entropy-V-split} and $\eqref{EQ:entropy-X-split}$.
Equality in \eqref{EQ:entropy-V-split} holds if and only if for every set of the form $S_n = \{ X_{[1,n]} = x_1 \cdots x_n \}$ with positive $\rho$ measure, the random variables $V_1,\ldots,V_n$ are independent with respect to the induced measure $\rho_{S_n}$. Phrased differently,
\begin{equation}
\label{EQ:conditional-independence}
\rho_{\{ X_{[1,n]} = x_1 \cdots x_n \}} (\{V_1 = v_1,\ldots,V_n=v_n \}) 
= \prod_{i=1}^n \rho_{\{ X_{[1,n]} = x_1 \cdots x_n \}} (\{V_i = v_i \}).
\end{equation}
On the other hand, equality in \eqref{EQ:entropy-X-split} means that, given $X_1$, the realisation of $V_1$ is independent of $X_2,\ldots,X_n$. That is, for every realisation $x_1 \cdots x_n$ of positive measure, we have that
\begin{equation}
\label{EQ:conditional-refinement}
\rho_{\{ X_{[1,n]} = x_1 \cdots x_n \}}(\{V_1 = v_1\})
= \rho_{\{ X_1 = x_1 \}} (\{ V_1 = v_1 \}) = \mathbf{P}_{v_1,x_1}.
\end{equation}
Recalling the normalisation in \eqref{EQ:rho-on-X-measure}, equality in \eqref{EQ:induced-measure-entropy-estimate} hence requires that
\begin{equation}
\label{EQ:rho-relation-to-P}
\prod_{i=1}^n \mathbf{P}_{v_i,x_i} = \frac{\rho((x_1,v_1)\cdots (x_n,v_n))}{\mu([x_1\cdots x_n])}.
\end{equation}
This is equivalent to
\[
\nu_A([(x_1,v_1)\cdots (x_n,v_n)]) = \mu([x_1 \cdots x_n]) \mathbb{P}[\vartheta_{\mathbf{P}}(x_1 \cdots x_n) = v_1 \cdots v_n],
\]
which remains true if $\mu([x_1 \cdots x_n]) = 0$ because $\mu = D(\nu_A)$.
We therefore find equivalence to $\nu_A = \vartheta_{\mathbf{P}}(\mu)$, which is in turn equivalent to $\nu = \trans_{\mathbf{P}}(\mu)$.
Conversely, it is straightforward to verify that the distribution $\rho$ fixed by \eqref{EQ:rho-relation-to-P} indeed satisfies both \eqref{EQ:conditional-independence} and \eqref{EQ:conditional-refinement}. 
From this, we conclude that equality in \eqref{EQ:induced-measure-entropy-estimate} holds if and only if $\nu = \trans_{\mathbf{P}}(\mu)$.

In this case, we obtain for the geometric entropy of $\nu = \trans_{\mathbf{P}}(\mu)$ the explicit expression
\[
\hg_\nu = \frac{h_{\nu_A}}{\lambda \Length R^\mu}
= \frac{1}{\lambda} \left( \hg_\mu + \frac{H_{\mathbf{P}} R^\mu}{\Length R^\mu}  \right),
\]
which is precisely the claimed relation.
 \end{proof}

 \begin{corollary}
 \label{COR:entropy-maximising-uniform}
For each $\mu \in \mc M$, the unique measure of maximal geometric entropy in $\Pi^{-1}(\mu)$ is given by $\trans_{\mathbf{P}}(\mu)$, where $(\mathbf{P}_{v,a} )_{v \in \vartheta(a)}$ is the uniform distribution for each $a$, and satisfies
\[
\hg_{\trans_{\mathbf{P}} (\mu)} = \frac{1}{\lambda} \left( \hg_{\mu} + \frac{1}{\Length R^\mu} \sum_{a \in \mc A} \mu([a]) \log \# \vartheta(a) \right).
\]
 \end{corollary}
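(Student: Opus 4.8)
The plan is to reduce the maximisation over the full fibre $\Pi^{-1}(\mu)$ to the maximisation over each class $\mc M[\mathbf{P}]$, which is already settled by \Cref{LEM:entropy-maximising-inverse-branch}. First I would record that every $\nu \in \Pi^{-1}(\mu)$ lies in $\mc M[\mathbf{P}]$ for at least one probability choice, namely any $\mathbf{P} \in \mathbf{P}(\nu)$, so that
\[
\Pi^{-1}(\mu) = \bigcup_{\mathbf{P}} \bigl( \Pi^{-1}(\mu) \cap \mc M[\mathbf{P}] \bigr).
\]
By \Cref{LEM:entropy-maximising-inverse-branch}, on each piece the geometric entropy is maximised uniquely at $\trans_{\mathbf{P}}(\mu)$, with value $\tfrac{1}{\lambda}\bigl(\hg_\mu + H_{\mathbf{P}} R^\mu / \Length R^\mu\bigr)$. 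Since $\hg_\mu$ and $\Length R^\mu$ are independent of $\mathbf{P}$, maximising over the whole fibre reduces to maximising $H_{\mathbf{P}} R^\mu = \sum_{a} \mu([a]) \Ent(\vartheta_{\mathbf{P}}(a))$ over all admissible $\mathbf{P}$.

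For the second step I would invoke the standard entropy bound, property (1) of the conditional entropy properties: for each $a$ the quantity $\Ent(\vartheta_{\mathbf{P}}(a))$ is the entropy of the probability vector $(\mathbf{P}_{v,a})_{v \in \vartheta(a)}$, hence bounded by $\log \# \vartheta(a)$, with equality precisely when this vector is uniform. As the coefficients $\mu([a]) \geqslant 0$ are fixed, it follows that
\[
H_{\mathbf{P}} R^\mu \leqslant \sum_{a \in \mc A} \mu([a]) \log \# \vartheta(a),
\]
with equality if and only if $(\mathbf{P}_{v,a})_{v}$ is uniform for every $a$ with $\mu([a]) > 0$. Choosing $\mathbf{P}$ uniform on every $\vartheta(a)$ attains the bound, and substituting into the formula of \Cref{LEM:entropy-maximising-inverse-branch} yields the claimed expression for $\hg_{\trans_{\mathbf{P}}(\mu)}$.

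The delicate point, which I expect to require the most care, is uniqueness of the maximiser \emph{as a measure}. Here I would exploit that $\vartheta_{\mathbf{P}}(\mu)$, and hence $\trans_{\mathbf{P}}(\mu)$, depends on $\mathbf{P}$ only through those columns $\mathbf{P}_{\cdot,a}$ with $\mu([a]) > 0$: indeed, by \eqref{EQ:substitution-on-measure} the cylinder $[(a_1,v_1)\cdots(a_n,v_n)]$ can carry positive mass only when $\mu([a_1 \cdots a_n]) > 0$, which by shift-invariance (using $[a_1\cdots a_n] \subseteq S^{-(i-1)}[a_i]$) forces $\mu([a_i]) > 0$ for each $i$. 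Consequently all probability choices that are uniform on the columns indexed by letters of positive measure induce the \emph{same} measure $\trans_{\mathbf{P}}(\mu)$, so the uniform choice is unambiguous at the level of measures. Finally, if some $\nu \in \Pi^{-1}(\mu)$ attains the maximum, I pick $\mathbf{P} \in \mathbf{P}(\nu)$ and observe that the chain $\hg_\nu \leqslant \hg_{\trans_{\mathbf{P}}(\mu)} \leqslant \max$ must be an equality throughout; the first equality forces $\nu = \trans_{\mathbf{P}}(\mu)$ by \Cref{LEM:entropy-maximising-inverse-branch}, and the second forces $\mathbf{P}_{\cdot,a}$ uniform for all $a$ with $\mu([a])>0$ by the equality analysis above. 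By the preceding observation this identifies $\nu$ with the uniform transfer, establishing uniqueness.
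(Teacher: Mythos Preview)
Your proposal is correct and follows essentially the same route as the paper: decompose $\Pi^{-1}(\mu) = \bigcup_{\mathbf{P}} (\Pi^{-1}(\mu) \cap \mc M[\mathbf{P}])$, apply \Cref{LEM:entropy-maximising-inverse-branch} on each piece, and then maximise $H_{\mathbf{P}} R^\mu$ over $\mathbf{P}$ using the standard entropy bound. Your treatment of uniqueness is actually more detailed than the paper's, which simply notes in one line that the choice of $\mathbf{P}_{\cdot,a}$ is immaterial when $\mu([a]) = 0$; your explicit justification via \eqref{EQ:substitution-on-measure} and shift-invariance makes this point precise.
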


\begin{proof}
Note that $\Ent(\vartheta_{\mathbf{P}}(a)) = \log \# \vartheta(a)$ if and only if $\mathbf{P}^a =  (\mathbf{P}_{v,a} )_{v \in \vartheta(a)}$ is the uniform distribution on $\vartheta(a)$. Hence,
\[
H_{\mathbf{P}} R^\mu =
\sum_{a \in \mc A} \mu([a])\Ent(\vartheta_{\mathbf{P}}(a))
= \sum_{a \in \mc A} \mu([a]) \log \# \vartheta(a),
\]
if and only if $\mathbf{P}^a$ is uniform for all $a$ with $\mu([a]) \geqslant 0$. We observe that for $\mu([a]) = 0$ the choice of $\mathbf{P}^a$ is immaterial for $\trans_{\mathbf{P}}(\mu)$ and can hence be chosen uniform without altering the measure.
The claim therefore follows from \Cref{LEM:entropy-maximising-inverse-branch} by decomposing $\Pi^{-1}(\mu) = \cup_{\mathbf{P}} (\Pi^{-1}(\mu) \cap \mc M[\mathbf{P}])$.
\end{proof}

\begin{lemma}
\label{LEM:T-consistency}
We have $\trans_{\mathbf{P} \mathbf{P}'} = \trans_{\mathbf{P}} \circ \trans_{\mathbf{P}'}$.
\end{lemma}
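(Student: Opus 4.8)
The plan is to pin down both operators through the characterisation, implicit in the definition of $\trans$ via Kac's formula, that for a probability choice $\mathbf{Q}$ of $\vartheta^k$ the measure $\trans_{\mathbf{Q}}(\mu)$ is the \emph{unique} $S$-invariant measure whose induced measure on $A_k = \vartheta^k(X_\vartheta)$ equals $\vartheta^k_{\mathbf{Q}}(\mu)$. Since $\mathbf{P}\mathbf{P}'$ is a probability choice for $\vartheta^2$ and $\vartheta^2$ is again recognisable, geometrically compatible and primitive, it suffices to set $\nu_1 = \trans_{\mathbf{P}'}(\mu)$ and $\nu_2 = \trans_{\mathbf{P}}(\nu_1)$ and to show that the induced measure of $\nu_2$ on $A_2$ equals $\vartheta^2_{\mathbf{P}\mathbf{P}'}(\mu)$; the uniqueness of the Kac lift then forces $\nu_2 = \trans_{\mathbf{P}\mathbf{P}'}(\mu)$.

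First I would reduce this to an inducing computation inside $A_1$. By definition of $\trans_{\mathbf{P}}$ the induced measure $(\nu_2)_{A_1}$ equals $\vartheta_{\mathbf{P}}(\nu_1)$. As $A_2 = \vartheta(A_1) \subset A_1$ and all return times are bounded, inducing is transitive for the nested sets, $(S_{A_1})_{A_2} = S_{A_2}$ and $\bigl((\nu_2)_{A_1}\bigr)_{A_2} = (\nu_2)_{A_2}$; hence $(\nu_2)_{A_2}$ is the induced measure of $\vartheta_{\mathbf{P}}(\nu_1)$ on $A_2$, computed within $(A_1,S_{A_1}) \cong (Z_\vartheta,S)$ as in \Cref{LEM:conjugation}.

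Next I would identify how $A_2$ sits inside $A_1$. Using recognisability at level one, a point $(x_i,v_i)_{i\in\Z} \in Z_\vartheta$ lies in the image of $A_2$ precisely when its first-coordinate sequence $x = (x_i)$ lies in $A_1 = \vartheta(X_\vartheta)$ with a word boundary at the origin; grouping the level-one words $v_i$ according to the decomposition $x = \cdots \tilde w_{-1}.\tilde w_0 \cdots$ then recovers the level-two words. The crucial point is that this membership condition, and the associated return time, depend only on the first coordinate. I would then use the product structure of $\vartheta_{\mathbf{P}}(\nu_1)$ from \eqref{EQ:substitution-on-measure}: its first-coordinate marginal is $\nu_1$, and conditionally on $x$ the words $v_i$ are independent with law $\mathbf{P}_{\cdot,x_i}$. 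Inducing on $A_2$ therefore acts only through the first coordinate, replacing the marginal $\nu_1$ by its induced measure $(\nu_1)_{A_1} = \vartheta_{\mathbf{P}'}(\mu)$ while leaving the conditional $\mathbf{P}$-substitution of the $v_i$ intact. Evaluating on a level-two cylinder $[(b_1,u_1)\cdots(b_n,u_n)]$ with intermediate words $\tilde w_j \in \vartheta(b_j)$, so that $u_j \in \vartheta(\tilde w_j)$, this yields
\[
(\nu_2)_{A_2}\bigl([(b_1,u_1)\cdots(b_n,u_n)]\bigr) = \mu([b_1\cdots b_n]) \prod_{j=1}^n \mathbf{P}'_{\tilde w_j,b_j}\,\mathbf{P}_{u_j,\tilde w_j}.
\]
The disjoint set condition guarantees that $\tilde w_j$ is the unique intermediate word with $u_j \in \vartheta(\tilde w_j)$, so $\mathbf{P}'_{\tilde w_j,b_j}\mathbf{P}_{u_j,\tilde w_j} = (\mathbf{P}\mathbf{P}')_{u_j,b_j}$, and the right-hand side is exactly $\vartheta^2_{\mathbf{P}\mathbf{P}'}(\mu)\bigl([(b_1,u_1)\cdots(b_n,u_n)]\bigr)$.

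I expect the main obstacle to be the combinatorial bookkeeping of the third step: correctly identifying $A_2$ as a first-coordinate condition inside $A_1 \cong Z_\vartheta$, pairing the level-two decomposition of each $u_j$ with its intermediate word $\tilde w_j$, and verifying that inducing genuinely exchanges the first-coordinate marginal $\nu_1$ for $(\nu_1)_{A_1} = \vartheta_{\mathbf{P}'}(\mu)$ without disturbing the conditional law of the $v_i$. The same computation incidentally shows $\Pi^{(2)} = \Pi \circ \Pi$, so an alternative route would combine this with the entropy formula in \Cref{LEM:entropy-maximising-inverse-branch} and the additivity $H_{\mathbf{P}\mathbf{P}'} = H_{\mathbf{P}'} + H_{\mathbf{P}}M(\mathbf{P}')$ (a chain-rule computation using recognisability) to conclude via the uniqueness of the entropy-maximising inverse branch; I would nonetheless favour the direct inducing argument above as the more self-contained one.
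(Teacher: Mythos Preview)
Your proposal is correct and close in spirit to the paper's argument, but organised differently in a way worth noting. Both proofs reduce to checking agreement on level-$2$ cylinders $C_2(x,w)$ and ultimately rely on the same product factorisation $\mathbf{P}'_{\tilde w_j,b_j}\mathbf{P}_{u_j,\tilde w_j}=(\mathbf{P}\mathbf{P}')_{u_j,b_j}$ coming from recognisability. The paper, however, computes $\trans_{\mathbf{P}}(\trans_{\mathbf{P}'}(\mu))$ directly on $C_2(x,w)$, obtaining $\trans_{\mathbf{P}\mathbf{P}'}(\mu)(C_2(x,w))$ up to the constant $\lambda_{\mu,\mathbf{P}\mathbf{P}'}/(\lambda_{\nu,\mathbf{P}}\lambda_{\mu,\mathbf{P}'})$, and then invokes \Cref{LEM:letter-frequency-uniformity} together with $M(\mathbf{P}\mathbf{P}')=M(\mathbf{P})M(\mathbf{P}')$ to verify that this constant equals $1$. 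Your route via the uniqueness of the Kac lift and transitivity of inducing $((\nu_2)_{A_1})_{A_2}=(\nu_2)_{A_2}$ sidesteps this normalisation computation entirely: once you identify $(\nu_2)_{A_2}$ with $\vartheta^2_{\mathbf{P}\mathbf{P}'}(\mu)$, the conclusion is immediate. This is a genuine, if modest, streamlining. The alternative route you sketch at the end (via the entropy formula and additivity of $H_{\mathbf{P}\mathbf{P}'}$) would also work but is more circuitous; the paper, like you, opts for the direct computation.
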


\begin{proof}
Given $\mu \in \mc M$, we need to prove that $\trans_{\mathbf{P} \mathbf{P}'}(\mu) = \trans_{\mathbf{P}}(\nu)$, where $\nu = \trans_{\mathbf{P}'}(\mu)$. 
Since both measures are $S$-invariant, it suffices to show that they coincide on $A_2 = \vartheta^2(X_{\vartheta}) \subset \vartheta(X_{\vartheta})$.
Every cylinder on $A_2$ is of the form 
\[
C_2(x,w) = [(x_1,w_1^{(2)}) \cdots (x_n,w_n^{(2)})],
\]
where $x = x_1 \cdots x_n \in \mc L_n$ and $w = w_1^{(2)} \cdots w_n^{(2)}$ is the unique decomposition of $w$ such that $w_i^{(2)} \in \vartheta^2(x_i)$ for all $1 \leqslant i \leqslant n$. By recognisability, there is a unique word $v$ with $v \in \vartheta(x)$ and $w \in \vartheta(v)$. Assuming $v=v_1 \cdots v_n$, with $v_i \in \vartheta(x_i)$ for all $1\leqslant i \leqslant n$, there is hence a unique preimage of $C_2(x,w)$ under $\vartheta$ in $A = \vartheta(X_{\vartheta})$, given by
\[
C_1(x,v) = [(x_1,v_1) \cdots (x_n,v_n)].
\]
Since $C_2(x,w) \in A_2 \subset A$, we obtain
\begin{align*}
\trans_{\mathbf{P}}(\nu)(C_2(x,w))
& = \frac{1}{\lambda_{\nu,\mathbf{P}}} \vartheta_{\mathbf{P}}(\nu)(C_2(x,w))
= \frac{1}{\lambda_{\nu,\mathbf{P}}} \nu(C_1(x,v)) \mathbb{P}[\vartheta_{\mathbf{P}}(v) = w]
\\ & = \frac{1}{\lambda_{\nu,\mathbf{P}} \lambda_{\mu,\mathbf{P}'}} \mu([x]) \mathbb{P}[\vartheta_{\mathbf{P}'}(x) = v] \mathbb{P}[\vartheta_{\mathbf{P}}(v) = w]
\\ &= \frac{1}{\lambda_{\nu,\mathbf{P}} \lambda_{\mu,\mathbf{P}'}} \mu([x]) \mathbb{P}[\vartheta^2_{\mathbf{P} \mathbf{P}'}(x) = w]
=  \frac{\lambda_{\mu,\mathbf{P} \mathbf{P}'}}{\lambda_{\nu,\mathbf{P}} \lambda_{\mu,\mathbf{P}'}} \trans_{\mathbf{P} \mathbf{P}'}(\mu)(C_2(x,w))
.
\end{align*}
Hence, it remains to show that $\lambda_{\nu,\mathbf{P}} \lambda_{\mu,\mathbf{P}'} = \lambda_{\mu,\mathbf{P} \mathbf{P}'}$.
Indeed, recalling that $\nu = \trans_{\mathbf{P}'}(\mu)$ and writing $\mathds{1}$ for the vector with constant entries $1$, we obtain by \Cref{LEM:letter-frequency-uniformity} that
\[
\lambda_{\nu,\mathbf{P}} \lambda_{\mu,\mathbf{P}'}
= \lambda_{\mu,\mathbf{P}'} \mathds{1}^T M(\mathbf{P}) R^{\trans_{\mathbf{P}'}(\mu)}
= \mathds{1}^T M(\mathbf{P}) M(\mathbf{P}') R^\mu
= \mathds{1}^T M(\mathbf{P} \mathbf{P}') R^\mu
= \lambda_{\mu,\mathbf{P} \mathbf{P}'}.
\]
This shows equality of  $\trans_{\mathbf{P} \mathbf{P}'}(\mu) $ and $ \trans_{\mathbf{P}} \circ \trans_{\mathbf{P}'}(\mu)$ on $A_2$, implying equality on the whole space due to shift invariance.
\end{proof}

\begin{lemma}\label{LEM:frequency-fixed-measure}
The frequency measure $\mu_{\mathbf{P}}$ is the unique fixed point of $\trans_{\mathbf{P}}$. That is, $\mu_{\mathbf{P}}$ is the unique invariant probability measure $\mu$ with $\trans_{\mathbf{P}}(\mu) = \mu$.
\end{lemma}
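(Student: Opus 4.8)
The plan is to prove the stronger statement that $\trans_{\mathbf{P}}^n(\mu) \to \mu_{\mathbf{P}}$ weakly for \emph{every} $\mu \in \mc M$. This yields both assertions at once: if $\trans_{\mathbf{P}}(\mu) = \mu$, then $\trans_{\mathbf{P}}^n(\mu) = \mu$ for all $n$, forcing $\mu = \mu_{\mathbf{P}}$ and giving uniqueness; and applying the weak continuity of $\trans_{\mathbf{P}}$ from \Cref{LEM:continuous-transfer-operator} to the convergent sequence $\trans_{\mathbf{P}}^n(\mu)$ gives $\trans_{\mathbf{P}}(\mu_{\mathbf{P}}) = \lim_n \trans_{\mathbf{P}}^{n+1}(\mu) = \mu_{\mathbf{P}}$, establishing that $\mu_{\mathbf{P}}$ is indeed a fixed point. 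By iterating \Cref{LEM:T-consistency} we have $\trans_{\mathbf{P}}^n = \trans_{\mathbf{P}^n}$, so it suffices to control $\trans_{\mathbf{P}^n}(\mu)$ as $n \to \infty$.

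First I would evaluate $\trans_{\mathbf{P}^n}(\mu)$ on an arbitrary cylinder $[w]$. Applying Kac's formula with $f = \mathbf{1}_{[w]}$ on the induced system over $A_n = \vartheta^n(X_{\vartheta})$, and using the identification of $(A_n,S_{A_n})$ provided by \Cref{LEM:conjugation} (for which the return time is $r_{A_n}(y) = |v_0|$, the length of the central level-$n$ inflation word), gives
\[
\trans_{\mathbf{P}^n}(\mu)([w]) = \frac{1}{\lambda_{\mathbf{P}^n,\mu}}\, \mathbb{E}_{\vartheta^n_{\mathbf{P}^n}(\mu)}\!\left[ \#\{\, 0 \le i < |v_0| : y_{[i,\,i+|w|-1]} = w \,\} \right].
\]
The occurrences counted here split into those lying entirely inside $v_0$, which number $|v_0|_w$, and at most $|w|-1$ occurrences straddling the boundary between $v_0$ and $v_1$. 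Since $(\vartheta^n_{\mathbf{P}^n}(\mu))([(a,v)]) = \mu([a])\,\mathbf{P}^n_{v,a}$ by \eqref{EQ:substitution-on-measure}, the distribution of the central pair $(x_0,v_0)$ is $\mathbb{P}[x_0 = a,\, v_0 = v] = \mu([a])\,\mathbf{P}^n_{v,a}$; taking expectations and recalling $\lambda_{\mathbf{P}^n,\mu} = \sum_a \mu([a])\,\mathbb{E}[\,|\vartheta^n_{\mathbf{P}^n}(a)|\,]$ yields
\[
\trans_{\mathbf{P}^n}(\mu)([w]) = \frac{\sum_{a \in \mc A} \mu([a])\, \mathbb{E}\bigl[\,|\vartheta^n_{\mathbf{P}^n}(a)|_w\,\bigr] + R_n}{\sum_{a \in \mc A} \mu([a])\, \mathbb{E}\bigl[\,|\vartheta^n_{\mathbf{P}^n}(a)|\,\bigr]},
\]
with a boundary error $0 \le R_n \le |w|-1$.

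It then remains to pass to the limit. By the defining property of the frequency measure as the common limit measure of the sequences $(\vartheta^n_{\mathbf{P}^n}(a))_n$, together with \eqref{EQ:limit-measure-on-cylinders}, we have $\mathbb{E}[\,|\vartheta^n_{\mathbf{P}^n}(a)|_w\,]/\mathbb{E}[\,|\vartheta^n_{\mathbf{P}^n}(a)|\,] \to \mu_{\mathbf{P}}([w])$ for each $a \in \mc A$, and since $\mc A$ is finite this convergence is uniform in $a$. Substituting into the quotient above shows the main term converges to the convex-combination value $\mu_{\mathbf{P}}([w])$, while the boundary term is negligible because the denominator grows like $\lambda^n \to \infty$ and $R_n$ stays bounded. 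Hence $\trans_{\mathbf{P}^n}(\mu)([w]) \to \mu_{\mathbf{P}}([w])$ on every cylinder, which is precisely the desired weak convergence $\trans_{\mathbf{P}}^n(\mu) \to \mu_{\mathbf{P}}$.

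The main obstacle is the bookkeeping in the cylinder computation: correctly identifying $\vartheta^n_{\mathbf{P}^n}(\mu)$ on $A_n$, isolating the contribution of the central inflation word, and verifying that the straddling occurrences contribute a uniformly bounded error independent of the growing window length. Once this is set up cleanly, the passage to the limit reduces to the definition of $\mu_{\mathbf{P}}$ and the elementary fact that a weighted average of quantities converging to a common value converges to that value.
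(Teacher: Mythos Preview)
Your proposal is correct and follows essentially the same route as the paper's own proof: both show $\trans_{\mathbf{P}}^n(\mu) \to \mu_{\mathbf{P}}$ for every $\mu \in \mc M$ by using $\trans_{\mathbf{P}}^n = \trans_{\mathbf{P}^n}$, evaluating on cylinders via Kac's formula with the bound $|v|_w \le f_w \le |v|_w + |w|$ on level-$n$ inflation cylinders, and then invoking the definition of $\mu_{\mathbf{P}}$ as the limit measure of $(\vartheta^n_{\mathbf{P}^n}(a))_n$; the deduction of the fixed-point property and uniqueness is likewise identical.
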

\begin{proof}
Let $\nu$ be an arbitrary invariant probability measure. We will show that $T=\trans_{\mathbf{P}}$ satisfies $\lim_{n \to \infty} T^n(\nu) = \mu_{\mathbf{P}}$. Using the continuity of $T$, this implies that $T(\mu_{\mathbf{P}}) = \mu_{\mathbf{P}}$. Conversely, if $\nu = T(\nu)$ this directly implies that $\nu = \mu_{\mathbf{P}}$. By \Cref{LEM:T-consistency}, we know that $T^n = T_{\mathbf{P}^n}$. Set $A_n = \vartheta^n(X_{\vartheta})$, and given $u \in \mc A^+$ let $\chi_{[u]}$ be the indicator function of $[u]$. Recall that
\[
T^n(\nu)([u]) = \frac{1}{\lambda_{\mathbf{P}^n,\nu}} \vartheta^n_{\mathbf{P}^n}(\nu)(f_u),
\quad
f_u = \sum_{i=0}^{r_{A_n} - 1} \chi_{[u]} \circ S^i.
\]
Since the ratio between the geometric and symbolic length of a word is bounded, we directly obtain that $\lim_{n \to \infty}\lambda_{\mathbf{P}^n,\nu} = \infty$. Further, note that on $[(a,v)]$ with $v \in \vartheta^n(a)$, we can estimate $f_u$ via $|v|_u \leqslant f_u \leqslant |v|_u + |u|$, and therefore we obtain
\begin{align*}
\vartheta^n_{\mathbf{P}^n}(f_u) & = \sum_{(a,v) \in \mc B^n} \vartheta^n_{\mathbf{P}^n}(\nu)([a,v]) |v|_u + O(|u|)
= \sum_{a \in \mc A} \nu([a]) \mathbb{E}[|\vartheta^n_{\mathbf{P}^n}(a)|_u] + O(|u|).
\\ &\approx \sum_{a \in \mc A} \nu([a])  \mu_{\mathbf{P}}([u]) \mathbb{E}[|\vartheta^n_{\mathbf{P}^n}(a)|]
= \mu_{\mathbf{P}}([u]) \lambda_{\mathbf{P}^n,\nu},
\end{align*}
yielding the desired convergence $\lim_{n \to \infty} T^n(\nu)([u]) = \mu_{\mathbf{P}}([u])$.
\end{proof}

  Note that for the class of random substitutions considered here, the known formula for the entropy of the frequency measure is a direct consequence of \Cref{LEM:frequency-fixed-measure} and \Cref{LEM:entropy-maximising-inverse-branch}.

\section{Inverse limit measures}
\label{SEC:inverse-limit-measures}

\subsection{Construction and entropy maximisation}

We start from a sequence of probability choices $\mc P = (\mathbf{P}_n)_{n \in \N}$ for the random substitution $\vartheta$. Our aim is to construct a measure that represents the word frequencies in $\vartheta_{\mathbf{P}_1} \circ \cdots \circ \vartheta_{\mathbf{P}_n}$ as $n \to \infty$, provided they are well defined. This is similar to the construction of invariant measures for S-adic systems.
We say that a sequence of $S$-invariant probability measures $(\mu_n)_{n \in \N}$ on $X_{\vartheta}$ is \emph{$(\vartheta, \mc P)$-adapted} if $\mu_n = \trans_{\mathbf{P}_n}(\mu_{n+1})$ for all $n \in \N$.
Likewise, we call $\mu$ a \emph{$(\vartheta,\mc P)$ inverse limit measure} if $\mu = \mu_1$ for some $(\vartheta,\mc P)$-adapted sequence $(\mu_n)_{n \in \N}$.
In this case, we have $\mu_{n+1} = \Pi^n(\mu)$ for all $n\in\N$.

\begin{remark}
\label{REM:frequency_measures-as-inverse-limits}
If $\mathbf{P}_n = \mathbf{P}$ for all $n \in \N$ is a constant sequence, it follows from the proof of \Cref{LEM:frequency-fixed-measure} that $\mu_\mathbf{P}$ is the unique $(\vartheta,\mc P)$ inverse limit measure. By considering higher powers of $\vartheta$, we also obtain a unique inverse limit measure if $\mc P$ is periodic and by extension if $\mc P$ is eventually periodic. 
\end{remark}

In general, the uniqueness of inverse limit measures is a subtle issue, but existence follows routinely via compactness in our setting.

\begin{lemma}
\label{LEM:existence-of-inverse-limit-measures}
For each sequence of probability choices $\mc P$ for $\vartheta$, there exists a $(\vartheta,\mc P)$ inverse limit measure.
\end{lemma}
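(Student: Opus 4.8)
The plan is to realise the inverse limit measure as an accumulation point of finite truncations, exploiting the compactness of the space $\mc M$ of invariant measures together with the weak continuity of the transfer operators established in \Cref{LEM:continuous-transfer-operator}. The construction mimics the standard way of producing invariant measures for $S$-adic systems: build adapted chains of every finite length and extract a coherent limit.

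First I would fix an arbitrary starting measure $\nu \in \mc M$ and, for each $N \in \N$, build a finite $(\vartheta,\mc P)$-adapted chain of length $N$ by pushing $\nu$ backwards through the operators. Concretely, for $n \leqslant N$ set
\[
\mu_n^{(N)} = \trans_{\mathbf{P}_n} \circ \trans_{\mathbf{P}_{n+1}} \circ \cdots \circ \trans_{\mathbf{P}_N}(\nu),
\]
together with $\mu_{N+1}^{(N)} = \nu$. By construction these satisfy the adaptedness relation $\mu_n^{(N)} = \trans_{\mathbf{P}_n}(\mu_{n+1}^{(N)})$ for every $n \leqslant N$; the only defect is that the chain terminates at level $N$.

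Next I would pass to the limit $N \to \infty$. Since $X_\vartheta$ is compact, the space $\mc M$ of $S$-invariant Borel probability measures is compact and metrisable in the weak topology, hence sequentially compact. I would therefore run a diagonal argument: starting from the full sequence of indices, successively extract nested subsequences so that $(\mu_1^{(N)})$, then $(\mu_2^{(N)})$, and so on each converge, and then take the diagonal subsequence $(N_k)_{k \in \N}$. Writing $\mu_n = \lim_{k \to \infty} \mu_n^{(N_k)}$ (the limit exists for every fixed $n$ because, along the diagonal, $(\mu_n^{(N_k)})_k$ is eventually a subsequence of a convergent sequence), I obtain a candidate sequence $(\mu_n)_{n \in \N}$ in $\mc M$.

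Finally I would verify adaptedness. For each fixed $n$ and all sufficiently large $k$, so that $N_k \geqslant n+1$, the relation $\mu_n^{(N_k)} = \trans_{\mathbf{P}_n}(\mu_{n+1}^{(N_k)})$ holds; letting $k \to \infty$ and invoking the weak continuity of $\trans_{\mathbf{P}_n}$ from \Cref{LEM:continuous-transfer-operator} yields $\mu_n = \trans_{\mathbf{P}_n}(\mu_{n+1})$. Thus $(\mu_n)_{n \in \N}$ is $(\vartheta,\mc P)$-adapted and $\mu_1$ is the required inverse limit measure. The argument is essentially soft: both the compactness and the continuity inputs are already in hand, so the only point requiring genuine care is the bookkeeping of the diagonal extraction together with the fact that $\mu_n^{(N)}$ is defined only once $N \geqslant n$. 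I do not expect any serious obstacle beyond this routine organisation.
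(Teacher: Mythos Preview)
The proposal is correct and follows essentially the same approach as the paper: fix an arbitrary $\nu \in \mc M$, form the finite adapted chains $\mu_n^{(N)} = \trans_{\mathbf{P}_n}\circ\cdots\circ\trans_{\mathbf{P}_N}(\nu)$, extract a diagonal subsequence by compactness of $\mc M$, and pass the adaptedness relation to the limit via the continuity of $\trans_{\mathbf{P}_n}$ from \Cref{LEM:continuous-transfer-operator}. Your write-up even states the recursion $\mu_n^{(N)} = \trans_{\mathbf{P}_n}(\mu_{n+1}^{(N)})$ in the correct direction.
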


\begin{proof}
Let $\nu \in \mc M$ be arbitrary. For all $n \in \N$ and $1\leqslant i \leqslant n$, let 
\[
\mu^{(n)}_i = \trans_{\mathbf{P}_i} \circ \cdots \circ \trans_{\mathbf{P}_n} (\nu).
\]
By construction, the finite sequence $(\mu_i^{(n)})_{i=1}^n$ satisfies 
\begin{equation} 
\label{EQ:approximants-transfer}
\mu^{(n)}_{i} = \trans_{\mathbf{P}_{i}}(\mu^{(n)}_{i+1})
\end{equation}
for all $1 \leqslant i \leqslant n-1$.
By compactness of $\mc M$, for each $i \in \N$, the sequence $(\mu_i^{(n)})_{n\geqslant i}$ has an accumulation point. Using a diagonal argument, we can choose an increasing subsequence $(n_j)_{j \in \N}$ of natural numbers such that for all $i \in \N$, we have $\lim_{j \to \infty} \mu^{(n_j)}_i = \mu_i$ for some $\mu_i \in \mc M$. The relation $\mu_{i+1} = \trans_{\mathbf{P}_i}(\mu_i)$ follows from \eqref{EQ:approximants-transfer} and the continuity of $\trans_{\mathbf{P}_i}$. This shows that $(\mu_i)_{i \in \N}$ is $(\vartheta,\mc P)$-adapted and hence $\mu_1$ is a $\mc P$ inverse limit measure.
\end{proof}

We show that $(\vartheta,\mc P)$ inverse limit measures are abundant enough to produce all possible letter frequencies while maximising the corresponding entropy.

\begin{theorem}
\label{THM:fixed-frequency-inverse-limit}
For each $\nu \in \mc M$, there exists a $\mc P$ and a $(\vartheta,\mc P)$ inverse limit measure $\mu$ with $\mu \sim \nu$ and $\hg_\mu \geqslant \hg_\nu$. If $\nu$ is not an inverse limit measure, we can choose $\mu$ such that $\hg_{\mu} > \hg_\nu$.
\end{theorem}

\begin{proof}
We start from an arbitrary measure $\nu = \mu^1_1 \in \mc M$ and inductively show the existence of a family $\{ \mu^n_i : n \in \N, 1\leqslant i \leqslant n  \}$ and $\mc P = (\mathbf{P}_i)_{i \in \N}$ with the following properties:
\begin{enumerate}
\item $\mu^{i+1}_{i+1} = \Pi(\mu^i_i)$ for all $i \in \N$;
\item $\mathbf{P}_i = \mathbf{P}(\mu^i_i)$ for all $i \in \N$;
\item $\mu^n_{i} = \trans_{\mathbf{P}_i} (\mu^n_{i+1}) $ for all $1 \leqslant i < n$;
\item $\mu^n_i \sim \mu^m_i$ for all $i \in \N$ and $n,m \geqslant i$;
\item $\hg_{\mu^{n}_i} \geqslant \hg_{\mu^{n-1}_i}$, with equality if and only if $\mu^{n}_i = \mu^{n-1}_i$. 
\end{enumerate}

Assume that, for some $N \in \N$ and all $1 \leqslant i \leqslant n \leqslant N$, the measures $\mu^n_i$ are well defined and fulfil the properties above. For $N=1$ this clearly holds. We perform the inductive step by showing that the same holds up to $N+1$. The first three properties are simply definitions, fixing the value of $\mathbf{P}_{N+1}$, as well as $\mu^{N+1}_{N+1} = \Pi(\mu^N_N)$, and $\mu^{N+1}_{i} = \trans_{\mathbf{P}_i} (\mu^{N+1}_{i+1}) $ for all $1 \leqslant i \leqslant N$. 
For the fourth property, it suffices to show that $\mu_i^N \sim \mu_i^{N+1}$ for all $1\leqslant i \leqslant N$. 
For $i = N$, this follows from the fact that $\mu_{N}^{N+1} = \trans_{\mathbf{P}_N}(\mu^{N+1}_{N+1})$ and $\mu^N_N$ are both in $\Pi^{-1}(\mu^{N+1}_{N+1}) \cap \mc M[\mathbf{P}_N]$. Indeed, by \Cref{LEM:letter-frequency-uniformity}, this implies that $\mu_N^{N+1} \sim \mu^N_N$. By the last statement in \Cref{LEM:letter-frequency-uniformity}, this also implies that
\[
\mu_i^N = \trans_{\mathbf{P}_i} \cdots \trans_{\mathbf{P}_{N-1}} \mu^N_N
\sim 
\trans_{\mathbf{P}_i} \cdots \trans_{\mathbf{P}_{N-1}} \mu^{N+1}_N
= \mu_i^{N+1},
\]
for all $i \leqslant N-1$, which completes the proof of the fourth property.
For the last property it remains to show that $\hg_{\mu_i^{N+1}} \geqslant \hg_{\mu_i^{N}}$ with equality if and only if the measures are equal. For $i =N$, this follows from the fact that $\mu_{N}^{N+1} = \trans_{\mathbf{P}_N}(\mu^{N+1}_{N+1})$ is the unique measure of maximal geometric entropy in $\Pi^{-1}(\mu^{N+1}_{N+1}) \cap \mc M[\mathbf{P}_N]$ by \Cref{LEM:entropy-maximising-inverse-branch}. Recall that by \Cref{LEM:entropy-maximising-inverse-branch},
\[
\hg_{\trans_{\mathbf{P}}(\mu)} = \frac{1}{\lambda} \left( \hg_\mu + \frac{H_{\mathbf{P}} R^\mu}{\Length R^\mu} \right)
\]
is strictly increasing in the entropy of $\mu$ as long as the letter frequencies and $\mathbf{P}$ remain fixed. Since $\mu_N^{N} \sim \mu^{N+1}_N$, this also shows that the entropy of $\mu_{N-1}^{N+1} = \trans_{\mathbf{P}_{N-1}}(\mu^{N+1}_N)$ is at least the entropy of $\mu_{N-1}^N = \trans_{\mathbf{P}_{N-1}}(\mu^N_N)$ and if the measures are not equal, then $\mu^N_N \neq \mu_N^{N+1}$, implying that the inequality of entropies is strict. Inductively, the same holds for the entropies of $\mu^{N+1}_i$ and $\mu^N_i$, finishing the proof of the last property.

In summary, we obtain for each $i \in \N$ a sequence $(\mu_i^n)_{n \geqslant i}$ of measures with identical letter frequencies and increasing geometric entropy. There exists an appropriate (diagonal) subsequence $(n_j)_{j \in \N}$ with respect to which each of these measure sequences converges and we set $\mu_i = \lim_{j\to \infty} \mu_i^{n_j}$ for all $i \in \N$. By continuity of the transfer operators and the third property, we obtain that $\mu_i = \trans_{\mathbf{P}_i}(\mu_{i+1})$ and in particular, $\mu_1$ is a $(\vartheta, \mc P)$ inverse limit measure. Since equality of letter frequencies is preserved under weak convergence, we also have $\mu_1 \sim \mu_1^1$.
Furthermore, by the upper semi-continuity of entropy, we also obtain that
\[
\hg_{\mu_1} \geqslant \sup_{n \in \N} \hg_{\mu_1^n} \geqslant \hg_{\mu_1^1},
\]
since the sequence $(\hg_{\mu_1^n})_{n \in \N}$ is increasing. We can only have equality if the entropy sequence is constant, implying that the measure sequence is constant. In this case $\mu^1 = \mu^1_1$, implying that the starting measure was already an inverse limit measure.
\end{proof}

All of the above can be generalised to higher powers of the random substitution $\vartheta$. In particular, we have $(\vartheta^n, \mc P)$ limit measures for all sequences $\mc P$ of probability choices for $\vartheta^n$ and $n \in \N$. 

For each $n \in \N$ and $\nu \in \mc M$, we regard the induced measure $\nu_{A_n}$ on $A_n = \vartheta^n(X_{\vartheta})$ as a measure on $\mc B_n^\Z$, where $\mc B_n = \{(a,v): a \in \mc A, v\in\vartheta^n(a) \}$.
In particular, we write $\nu_{A_n} \sim \mu_{A_n}$ if $\nu_{A_n}([(a,v)]) = \mu_{A_n}([(a,v)])$ for all $a \in \mc A$ and $v \in \vartheta^n(a)$.

\begin{definition}
For each $\nu \in \mc M$ and $n \in \N$ with $A_n = \vartheta^n(X_{\vartheta})$, we set
\[
\mc M[\nu,n] = \{ \mu \in \mc M: \mu_{A_n} \sim \nu_{A_n} \}.
\]
In particular, $\mc M[\nu,0] = \{ \mu \in \mc M: \mu \sim \nu \}$.
\end{definition}

Our next aim is to show that these inverse limit measures are dense in the space of all shift-invariant probability measures. As a first step, we show that every $\mc M[\nu,n]$ contains an inverse limit measure that maximises the geometric entropy.

\begin{lemma}
\label{LEM:inverse-limits-for-inflations-word-frequencies}
For each $n \in \N$ and $\nu \in \mc M$, every measure of maximal geometric entropy in $\mc M[\nu,n]$ is a $(\vartheta^n,\mc P)$ inverse limit measure for some sequence of probability choices $\mc P$ for $\vartheta^n$.
\end{lemma}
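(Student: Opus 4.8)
The plan is to reduce the statement to its one-step counterparts by passing to $\vartheta^n$ and exploiting that fixing the level-$n$ inflation word frequencies $\nu_{A_n}$ amounts to fixing \emph{both} a probability choice for $\vartheta^n$ \emph{and} the letter frequencies of the once-desubstituted measure. Throughout I work with the higher-power analogues granted in the paragraph following \Cref{LEM:fixed-frequency-inverse-limit}: the transfer operator $\trans^{(n)}_{\mathbf{P}}$ for $\vartheta^n$, its left inverse $\Pi_n$ (the $\vartheta^n$-desubstitution), the sets $\mc M_n[\mathbf{P}]$, and the level-$n$ entropy vector $H^{(n)}_{\mathbf{P}}$. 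First I would extract from $\nu$ a canonical $\vartheta^n$-choice: for $a$ with $\Pi_n(\nu)([a])>0$ set $\mathbf{P}^{(n)}_{v,a} = \nu_{A_n}([(a,v)])/\Pi_n(\nu)([a])$, and choose the remaining columns arbitrarily. The observation driving everything is that $\mu \in \mc M[\nu,n]$ forces $\Pi_n(\mu) \sim \Pi_n(\nu)$ (sum over $v$) together with $\mu \in \mc M_n[\mathbf{P}^{(n)}]$, so that, up to the immaterial zero-frequency coordinates, $\mc M[\nu,n] = \{\mu : \Pi_n(\mu) \sim \Pi_n(\nu)\} \cap \mc M_n[\mathbf{P}^{(n)}]$.

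Now let $\mu$ be a measure of maximal geometric entropy in $\mc M[\nu,n]$ and put $\mu^* = \Pi_n(\mu)$. Since $\mu$ lies in $\Pi_n^{-1}(\mu^*) \cap \mc M_n[\mathbf{P}^{(n)}]$, the $\vartheta^n$-analogue of \Cref{LEM:entropy-maximising-inverse-branch} gives $\hg_\mu \leqslant \hg_{\trans^{(n)}_{\mathbf{P}^{(n)}}(\mu^*)}$, with equality only for $\mu = \trans^{(n)}_{\mathbf{P}^{(n)}}(\mu^*)$. As $\trans^{(n)}_{\mathbf{P}^{(n)}}(\mu^*)$ again lies in $\mc M[\nu,n]$ (its level-$n$ single-cylinder values are $\mathbf{P}^{(n)}_{v,a}\,\Pi_n(\nu)([a]) = \nu_{A_n}([(a,v)])$), maximality forces $\mu = \trans^{(n)}_{\mathbf{P}^{(n)}}(\mu^*)$. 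It then remains only to show that $\mu^*$ is itself a $(\vartheta^n,\cdot)$ inverse limit measure, since prepending the single step $\trans^{(n)}_{\mathbf{P}^{(n)}}$ to an adapted sequence for $\mu^*$ exhibits $\mu$ as one.

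To establish this I would apply the $\vartheta^n$-analogue of \Cref{LEM:fixed-frequency-inverse-limit} to $\mu^*$, obtaining a sequence $\mc P'$ and a $(\vartheta^n,\mc P')$ inverse limit measure $\mu_2 \sim \mu^*$ with $\hg_{\mu_2} \geqslant \hg_{\mu^*}$, and strict inequality unless $\mu^*$ is already an inverse limit measure. Setting $\mu_1 = \trans^{(n)}_{\mathbf{P}^{(n)}}(\mu_2)$, one checks as above that $\mu_1 \in \mc M[\nu,n]$, and the entropy formula of \Cref{LEM:entropy-maximising-inverse-branch} reads
\[
\hg_{\mu_1} = \frac{1}{\lambda^n}\left( \hg_{\mu_2} + \frac{H^{(n)}_{\mathbf{P}^{(n)}} R^{\mu_2}}{\Length R^{\mu_2}} \right).
\]
The crucial point is that the additive term depends on $\mu_2$ only through its letter frequencies; since $\mu_2 \sim \mu^*$, it coincides with the corresponding term for $\mu = \trans^{(n)}_{\mathbf{P}^{(n)}}(\mu^*)$, whence $\hg_{\mu_1} \geqslant \hg_\mu$. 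Maximality of $\mu$ gives $\hg_{\mu_1} = \hg_\mu$, forcing $\hg_{\mu_2} = \hg_{\mu^*}$, and the strict part of the inverse-limit lemma then yields that $\mu^*$ is an inverse limit measure, completing the argument.

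I expect the main obstacle to be bookkeeping rather than conceptual: verifying cleanly that the constraint $\mu_{A_n} \sim \nu_{A_n}$ factors as a fixed $\vartheta^n$-choice $\mathbf{P}^{(n)}$ plus a constraint invariant under $\sim$, and handling the zero-frequency letters where $\mathbf{P}^{(n)}$ is unconstrained (these do not affect $\trans^{(n)}_{\mathbf{P}^{(n)}}$, exactly as in the proof of \Cref{COR:entropy-maximising-uniform}). The single load-bearing idea is that the entropy increment produced by one transfer step depends on the underlying measure only through its letter frequencies; this is precisely what allows the entropy improvement of $\mu^*$ furnished by \Cref{LEM:fixed-frequency-inverse-limit} to be transported without loss through the fixed first step $\trans^{(n)}_{\mathbf{P}^{(n)}}$ and back into $\mc M[\nu,n]$.
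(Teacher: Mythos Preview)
Your proposal is correct and follows essentially the same route as the paper: you factor the constraint $\mc M[\nu,n]$ into a fixed $\vartheta^n$-probability choice together with the condition $\Pi_n(\mu)\sim\Pi_n(\nu)$, apply the $\vartheta^n$-version of \Cref{LEM:entropy-maximising-inverse-branch} to reduce any maximiser to the form $\trans^{(n)}_{\mathbf{P}^{(n)}}(\mu^*)$, and then invoke \Cref{LEM:fixed-frequency-inverse-limit} (via its strict-inequality clause) to force $\mu^*$ to be an inverse limit measure. The paper's proof is organised as a decomposition of $\mc M[\nu,n]$ over the fibres $\Pi^{-n}(\mu)$ rather than as a single intersection, but the underlying argument and the two key lemmas used are identical.
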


\begin{proof}
Since $\Pi^n$ collapses level-$n$ inflation words to letters, it is straightforward to verify that for each $\nu'\in \mc M[\nu,n]$, we have that $\Pi^n(\nu') \sim \Pi^n(\nu)$. That is, $\nu' \in \Pi^{-n}(\mu)$ for some $\mu \sim \Pi^n(\nu)$ and we can decompose
\[
\mc M[\nu,n] = \bigcup_{\mu \colon \mu \sim \Pi^n(\nu)} \mc M[\nu,n] \cap \Pi^{-n}(\mu). 
\]
Writing $\mathbf{P}_n$ for (some choice of) the probability data of $\nu$ on level-$n$ inflation words, we easily verify that $M[\nu,n] \cap \Pi^{-n}(\mu) = \mc M[\mathbf{P}_n] \cap \Pi^{-n}(\mu)$. Applying \Cref{LEM:entropy-maximising-inverse-branch} to $\vartheta^n$, we obtain that the unique measure of maximal geometric entropy in this set is given by $\trans_{\mathbf{P}_n}(\mu)$. Hence, the measures of maximal geometric entropy in $\mc M[\nu,n]$ are among the set
\[
\mc T_n = \left\{ \trans_{\mathbf{P}_n}(\mu): \mu \sim \Pi^n(\nu) \right \}.
\]
Due to the explicit entropy expression in \Cref{LEM:entropy-maximising-inverse-branch}, the maximal geometric entropy in $\mc T_n$ is obtained exactly for those measures $\mu \sim \Pi^n(\nu)$ that have maximal geometric entropy. By \Cref{THM:fixed-frequency-inverse-limit}, every such measure $\mu$ is an inverse limit measure, and hence the same holds for $\trans_{\mathbf{P}_n}(\mu)$. In summary, the measures of maximal geometric entropy in $\mc M[\nu,n]$ are $(\vartheta^n,\mc P)$ limit measures for appropriate $\mc P$. 
\end{proof}

\begin{lemma}
\label{LEM:convergence-along-inflation-words}
Let $(\mu_n)_{n \in \N}$ be a sequence of measures with $\mu_n \in \mc M[\nu,n]$ for all $n \in \N$. Then, $\lim_{n \to \infty} \mu_n = \nu$ in the weak topology.
\end{lemma}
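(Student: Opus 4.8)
The plan is to prove weak convergence by establishing that $\mu_n([w]) \to \nu([w])$ for every fixed word $w \in \mc A^+$; since the cylinder sets generate the topology on $X_{\vartheta}$, this suffices.

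First I would apply Kac's formula on the induced system $(A_n,S_{A_n})$ to both measures. Writing $f_w = \sum_{i=0}^{r_{A_n}-1} \mathds{1}_{[w]} \circ S^i$ and using the $\mc B_n$-representation of \Cref{LEM:conjugation}, the return time $r_{A_n}$ equals $|v|$ on each cylinder $[(a,v)]$ with $v \in \vartheta^n(a)$, so it depends only on the first inflation word. Since $\mu_n \in \mc M[\nu,n]$ means $\mu_{n,A_n}([(a,v)]) = \nu_{A_n}([(a,v)])$ for all $(a,v) \in \mc B_n$, the two normalisation factors coincide:
\[
\mu_{n,A_n}(r_{A_n}) = \sum_{(a,v) \in \mc B_n} \nu_{A_n}([(a,v)]) |v| = \nu_{A_n}(r_{A_n}).
\]

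Next, exactly as in the proof of \Cref{LEM:frequency-fixed-measure}, I would split $f_w = |v|_w + b_w$ on $[(a,v)]$, where $|v|_w$ counts the occurrences of $w$ lying entirely inside the first inflation word and $0 \leqslant b_w \leqslant |w|$ collects the boundary occurrences that spill into subsequent words. The main term $\sum_{(a,v)} \nu_{A_n}([(a,v)]) |v|_w$ agrees for $\mu_{n,A_n}$ and $\nu_{A_n}$, while the boundary contributions differ by at most $|w|$. Kac's formula then yields
\[
|\mu_n([w]) - \nu([w])| \leqslant \frac{|w|}{\nu_{A_n}(r_{A_n})}.
\]
Finally, I would show that the denominator tends to infinity: geometric compatibility gives $\Length(v) = \lambda^n \Length_a$ for every $v \in \vartheta^n(a)$, so the symbolic lengths satisfy $|v| \geqslant \Length(v)/\Length_{\max} \geqslant \lambda^n \Length_{\min}/\Length_{\max}$, whence $\nu_{A_n}(r_{A_n}) \geqslant \lambda^n \Length_{\min}/\Length_{\max} \to \infty$. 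This forces $|\mu_n([w]) - \nu([w])| \to 0$ for each fixed $w$, completing the proof.

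The main obstacle is the careful bookkeeping of the boundary term $b_w$: one must check that occurrences of $w$ straddling inflation-word boundaries contribute a quantity uniformly bounded by $|w|$ (independent of $n$), and that the internal count is literally the inflation-word statistic $|v|_w$ fixed by the condition $\mu_n \in \mc M[\nu,n]$. Once this is in place, the argument reduces to the elementary growth estimate for $\nu_{A_n}(r_{A_n})$, which is immediate from geometric compatibility.
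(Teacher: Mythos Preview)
Your proposal is correct and follows essentially the same route as the paper's proof: Kac's formula on $A_n$, equality of the normalisations via $\mu_{n,A_n}\sim\nu_{A_n}$, the splitting $f_w=g_w+b_w$ with $g_w=|v|_w$ constant on $[(a,v)]$ and $0\leqslant b_w\leqslant |w|$, yielding the bound $|\mu_n([w])-\nu([w])|\leqslant |w|/\nu_{A_n}(r_{A_n})$, and finally the exponential growth of the denominator from geometric compatibility. The paper phrases the boundary estimate as $g_u\leqslant f_u\leqslant g_u+|u|$ rather than introducing $b_w$ explicitly, but this is the same computation.
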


\begin{proof}
Recall that by Kac's formula we have for an arbitrary $S$-invariant measure $\mu$ and $u \in \mc A^+$ that
\[
\mu([u]) =\frac{\mu_{A_n}(f_u)}{\mu_{A_n}(r_{A_n})}, 
\quad
f_u = \sum_{i=0}^{r_{A_n}} \chi_{[u]} \circ S^i.
\]
Assume that $\mu \in \mc M[\nu,n]$; that is $\mu_{A_n} \sim \nu_{A_n}$. Since $r_{A_n}$ is constant on cylinders of the form $[(a,v)]$ with $(a,v) \in \mc B_n$, this implies that 
$\mu_{A_n}(r_{A_n}) = \nu_{A_n}(r_{A_n})$. The same holds for the function $g_u$ which takes the constant value $|v|_u$ on $[(a,v)]$ with $(a,v) \in \mc B_n$. Since $g_u \leqslant f_u \leqslant g_u + |u|$, we obtain that
\begin{equation}
\label{EQ:mu-nu-on-u}
|\mu([u]) - \nu([u])| \leqslant \frac{|u|}{\nu_{A_n}(r_{A_n})}.
\end{equation}
Note that $r_{A_n}$ gives the (symbolic) length of the level-$n$ inflation word at the origin. Since the geometric length grows with $\lambda^n$ and the ratio between symbolic and geometric length is bounded, the difference in \eqref{EQ:mu-nu-on-u} decays exponentially with $n$.
\end{proof}

\begin{prop}
Let $\mc M_{\operatorname{I}}$ be the set of all measures $\mu$ such that $\mu$ is a $(\vartheta^n,\mc P)$ inverse limit measure for some $\mc P$ and $n \in \N$. Then, for every $\nu \in \mc M$, we can find a sequence of measures $(\mu_n)_{n \in \N}$ with $\mu_n \in \mc M_{\operatorname{I}}$ such that $\lim_{n \to \infty} \mu_n = \nu$ and 
\[
\lim_{n \to \infty} \hg_{\mu_n} = \inf_{n \in \N} \hg_{\mu_n} = \hg_\nu.
\]
In particular, $\mc M_{\operatorname{I}}$ is dense in $\mc M$.
\end{prop}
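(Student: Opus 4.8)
The plan is to realise $\nu$ as a weak limit of measures that maximise the geometric entropy subject to prescribed level-$n$ inflation word frequencies, and then to pin the limiting entropy down by a squeezing argument. First I would fix, for each $n \in \N$, a measure $\mu_n \in \mc M[\nu,n]$ of maximal geometric entropy. Such a maximiser exists: the constraint set $\mc M[\nu,n]$ is weakly closed, being cut out by the equalities $\mu_{A_n}([(a,v)]) = \nu_{A_n}([(a,v)])$, and each such value depends continuously on $\mu$ (by recognisability, \Cref{LEM:recognisability-continuous}, the condition of being at a level-$n$ boundary with a prescribed inflation word at the origin is clopen, and $\mu(A_n)$ is bounded away from $0$ by the boundedness of the return time $r_{A_n}$). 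Hence $\mc M[\nu,n]$ is compact inside the compact space $\mc M$, and since $\hg$ is upper semicontinuous it attains its supremum there. By \Cref{LEM:inverse-limits-for-inflations-word-frequencies}, every such $\mu_n$ is a $(\vartheta^n,\mc P)$ inverse limit measure for a suitable $\mc P$, so $\mu_n \in \mc M_{\operatorname{I}}$.

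The weak convergence is then immediate: since $\mu_n \in \mc M[\nu,n]$ for every $n$, \Cref{LEM:convergence-along-inflation-words} gives $\lim_{n \to \infty} \mu_n = \nu$. In particular $\nu$ lies in the weak closure of $\mc M_{\operatorname{I}}$, and as $\nu \in \mc M$ was arbitrary this already yields the density statement.

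It remains to identify the limiting geometric entropy. For the lower bound I would use that $\nu \in \mc M[\nu,n]$ trivially, since $\nu_{A_n} \sim \nu_{A_n}$; maximality of $\mu_n$ then forces $\hg_{\mu_n} \geq \hg_\nu$ for all $n$. For the matching upper bound I would invoke the upper semicontinuity of $\hg$ together with $\mu_n \to \nu$ to obtain $\limsup_{n \to \infty} \hg_{\mu_n} \leq \hg_\nu$. Combining the two gives $\lim_{n \to \infty} \hg_{\mu_n} = \hg_\nu$, and since every term satisfies $\hg_{\mu_n} \geq \hg_\nu$ while the limit equals $\hg_\nu$, the infimum is $\hg_\nu$ as well; no monotonicity of the entropy sequence is needed.

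The only non-formal ingredient is the upper semicontinuity of the geometric entropy, and it is the crux that makes the squeeze close. This is already used in the proof of \Cref{LEM:fixed-frequency-inverse-limit}, so it is available here; it follows from the standard upper semicontinuity of $\mu \mapsto h_\mu(X_\vartheta)$ on the expansive subshift $X_\vartheta$ together with the fact that $\mu \mapsto \mu(\pi) = \Length R^\mu$ is weakly continuous and bounded away from $0$, so that the quotient $\hg_\mu = h_\mu/\mu(\pi)$ of a nonnegative upper semicontinuous function by a positive continuous one is again upper semicontinuous. Beyond this, the two routine points to verify are that the sets $\mc M[\nu,n]$ are weakly closed and that $\nu$ itself belongs to each of them, both of which are direct from the definitions.
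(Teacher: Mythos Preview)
Your proof is correct and follows essentially the same route as the paper: pick $\mu_n$ to be a geometric-entropy maximiser in $\mc M[\nu,n]$, invoke \Cref{LEM:inverse-limits-for-inflations-word-frequencies} to place it in $\mc M_{\operatorname{I}}$, use \Cref{LEM:convergence-along-inflation-words} for weak convergence, and close with the upper-semicontinuity squeeze. You spell out more carefully than the paper why the maximiser exists (closedness of $\mc M[\nu,n]$ via clopenness of the recognisability data and positivity of $\mu(A_n)$) and why the infimum equals $\hg_\nu$, but the underlying argument is the same.
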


\begin{proof}
Due to \Cref{LEM:inverse-limits-for-inflations-word-frequencies}, we can choose for each $n \in \N$ an inverse limit measure $\mu_n \in \mc M[\nu,n]$ such that $h_{\mu_n} \geqslant h_{\nu}$. Since this sequence of measures mimics the inflation word frequencies of $\nu$, it converges to $\nu$ by \Cref{LEM:convergence-along-inflation-words}. The statement on convergence of the geometric entropies follows by the upper semi-continuity of entropy.
\end{proof}

This result shows that all measures of maximal geometric entropy are limits of inverse limit measures of maximal geometric entropy. In particular, if there are only finitely many inverse limit measures of maximal geometric entropy, there can be no further measures of maximal geometric entropy.
In our quest to show intrinsic ergodicity we can hence restrict our attention to inverse limit measures.

\subsection{Uniqueness of inverse limit measures}
\label{SEC:inverse-limit-uniqueness}

\textit{A priori} it is not clear whether a given sequence $\mc P$ of probability choices admits just one or several inverse limit measures. In this section, we characterise the uniqueness of inverse limit measures via the ergodicity of an associated (inverse-time) Markov chain.

Given $\mc P = (\mathbf{P}_n)_{n \in \N}$, we call $Q(\mc P) = (Q(\mathbf{P}_n))_{n \in \N}$ the \emph{$\mc P$-Markov sequence}. This represents an inverse-time inhomogenous Markov process that controls the flow of interval proportions induced by $\mc P$. More precisely, if $(\mu_n)_{n \in \N}$ is $(\vartheta,\mc P)$-adapted, then $\mu_n = \trans_{\mathbf{P_n}}(\mu_{n+1})$ implies via \Cref{COR:interval-proportion-transformation} that
\[
\pi^{\mu_n} = Q(\mathbf{P}_n) \pi^{\mu_{n+1}}
\]
for all $n \in \N$. This has a unique solution precisely if the $\mc P$-Markov sequence is ergodic.

Before we continue, let us expand a bit more on the role of $Q(\mathbf{P})$ as updating the interval proportion vectors of  periodic measure representations under the action of $\vartheta_{\mathbf{P}}$. This gives a natural analogue to \Cref{COR:interval-proportion-transformation}.

\begin{lemma}
\label{LEM:interval-proportions-under-RS}
Given a random word $\omega$ and a random substitution $\vartheta_{\mathbf{P}}$, we have that
\[
\pi^{\overline{\mu}_{\vartheta_{\mathbf{P}}(\omega)}}
= Q(\mathbf{P}) \pi^{\overline{\mu}_{\omega}}.
\]
\end{lemma}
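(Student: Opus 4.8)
The plan is to reduce the claim to the linear identity $\mathbb{E}[\phi^{\operatorname{g}}(\vartheta_{\mathbf{P}}(w))] = Q(\mathbf{P})\phi^{\operatorname{g}}(w)$ for deterministic words, already recorded in the excerpt, by first re-expressing the interval proportion vector of a periodic measure representation as a geometric-length-weighted average of geometric proportion vectors. Concretely, from $\overline{\mu}_\omega = \mathbb{E}[\mu_\omega]/\mathbb{E}[|\omega|]$ together with $\mu_w([a]) = |w|_a$, I would first record that $R^{\overline{\mu}_\omega}_a = \mathbb{E}[|\omega|_a]/\mathbb{E}[|\omega|]$ and, consequently, $\Length R^{\overline{\mu}_\omega} = \mathbb{E}[\Length(\omega)]/\mathbb{E}[|\omega|]$. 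The factors of $\mathbb{E}[|\omega|]$ then cancel in the definition \eqref{EQ:interval-proportion-vector} of $\pi^\mu$, and using $\Length(\omega)\,\phi^{\operatorname{g}}(\omega)_a = \Length_a |\omega|_a$ one obtains the key reformulation
\[
\pi^{\overline{\mu}_\omega}
= \frac{\mathbb{E}\bigl[\Length(\omega)\,\phi^{\operatorname{g}}(\omega)\bigr]}{\mathbb{E}[\Length(\omega)]}\tp
\]

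Next I would apply the same expression to the random word $\vartheta_{\mathbf{P}}(\omega)$, handling the two sources of randomness (the word $\omega$ and the substitution) by conditioning on a realisation $\omega = u$. Geometric compatibility gives that the geometric length scales deterministically, $\Length(\vartheta_{\mathbf{P}}(u)) = \lambda \Length(u)$, so this scalar factor may be pulled outside the substitution-expectation; combined with the established identity for $\phi^{\operatorname{g}}$ this yields
\[
\mathbb{E}_{\vartheta}\bigl[\Length(\vartheta_{\mathbf{P}}(u))\,\phi^{\operatorname{g}}(\vartheta_{\mathbf{P}}(u))\bigr]
= \lambda\,\Length(u)\,Q(\mathbf{P})\,\phi^{\operatorname{g}}(u)\tp
\]
Taking the expectation over $\omega$ and moving the fixed matrix $Q(\mathbf{P})$ out of the expectation by linearity, the numerator becomes $\lambda\, Q(\mathbf{P})\, \mathbb{E}[\Length(\omega)\phi^{\operatorname{g}}(\omega)]$, while the denominator $\mathbb{E}[\Length(\vartheta_{\mathbf{P}}(\omega))]$ equals $\lambda\,\mathbb{E}[\Length(\omega)]$ for the same reason. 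The factors of $\lambda$ cancel, and applying the length-weighted identity from the first step once more collapses the ratio to $Q(\mathbf{P})\,\pi^{\overline{\mu}_\omega}$, as claimed.

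The computation is essentially mechanical, so there is no serious obstacle; the single point requiring care is that $\pi^\mu$ is defined through a normalising ratio, so one cannot naively interchange the passage to $\pi$ with the expectation over the substitution. The length-weighted reformulation in the first step is precisely what reconciles this: weighting $\phi^{\operatorname{g}}$ by $\Length(\omega)$ clears the denominators, after which the deterministic scaling of geometric length and the linearity of $Q(\mathbf{P})$ do all the work. An entirely equivalent alternative would bypass $\phi^{\operatorname{g}}$ and argue directly with the substitution matrix, using $\mathbb{E}[|\vartheta_{\mathbf{P}}(\omega)|_a] = (M(\mathbf{P})\,\mathbb{E}[\phi(\omega)])_a$ (from conditioning on $\omega$ and independence of the substitution on letters) and the definition $Q_{ab} = \Length_a M_{ab}/(\lambda\Length_b)$, with the factors $\Length_b$ cancelling in the final contraction against $\pi^{\overline{\mu}_\omega}$.
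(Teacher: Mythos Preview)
Your proposal is correct and amounts to the same computation as the paper's proof, just organised slightly differently: the paper works directly with the abelianisation, setting $v=\mathbb{E}[\phi(\omega)]$, using $\mathbb{E}[\phi(\vartheta_{\mathbf{P}}(\omega))]=M(\mathbf{P})v$, and converting $M$ to $Q$ via $\Length_a M_{ab}=\lambda\Length_b Q_{ab}$ in the final line. Your ``entirely equivalent alternative'' in the last paragraph is precisely the paper's argument, while your primary route through the length-weighted $\phi^{\operatorname{g}}$ identity is a harmless repackaging of the same steps.
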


\begin{proof}
    Let us set $M = M(\mathbf{P})$ and $Q= Q(\mathbf{P})$. 
    First, we note that by the definition of $\overline{\mu}$, we have that
    \[
    R^{\overline{\mu}_{\omega}} = \frac{\mathbb{E}[\phi(\omega)]}{\mathbb{E}[|\omega|]},
    \]
    for every random word $\omega$. Hence, for the corresponding interval proportion vector, we have
    \begin{equation}
    \label{EQ:interval-proportion-for-random-word-measure}
    \pi^{\overline{\mu}_{\omega}}_a = \frac{\Length_a \mathbb{E}[\phi(\omega)]_a}{\Length \mathbb{E}[\phi(\omega)]}.
    \end{equation}
     Note that $\mathbb{E}[\phi(\vartheta_{\mathbf{P}}(\omega))] = M \mathbb{E}[\phi(\omega)]$.
     For convenience, we use the shorthand $v = \mathbb{E}[\phi(\omega)]$ in the following.
    Applying \eqref{EQ:interval-proportion-for-random-word-measure} to the random word $\vartheta_{\mathbf{P}}(\omega)$, and recalling that $\Length_a M_{ab} = \lambda \Length_b Q_{ab}$, we obtain
    \[
    \pi^{\overline{\mu}_{\vartheta_{\mathbf{P}}(\omega)}}_a
    = \frac{\Length_a (Mv)_a}{ \Length M v}
    = \frac{\sum_{b \in \mc A} \Length_a M_{ab} v_b}{ \lambda \Length v}
    = \sum_{b \in \mc A} Q_{ab} \frac{\Length_b v_b}{\Length v}
    = (Q \pi^{\overline{\mu}_{\omega}})_a,
    \]
    as required.
\end{proof}

\begin{lemma}
\label{LEM:P-accumulations-give-measures}
    Let $\mc P$ be a sequence of probability choices with $\mc P$-Markov sequence $Q(\mc P) = (Q_n)_{n \in \N}$.
    Given $a \in \mc A$ and $m \in \N$, assume that $\pi$ is an accumulation point of $(Q_{[m,m+n]} e_a)_{n \in \N}$. Then, there is a $(\vartheta,\mc P)$-adapted sequence of measures $(\mu_n)_{n \in \N}$ such that $\pi^{\mu_m} = \pi$.
\end{lemma}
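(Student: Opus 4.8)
The plan is to build the adapted sequence from level $m$ upwards, since once $(\mu_n)_{n\geqslant m}$ is constructed it extends to all of $\N$ by setting $\mu_n=\trans_{\mathbf P_n}(\mu_{n+1})$ for $n<m$, which is automatically adapted. Passing to a subsequence, fix $(n_j)_{j\in\N}$ with $Q_{[m,m+n_j]}e_a\to\pi$. The key idea is that the initial condition $e_a$, which in general is \emph{not} the interval proportion vector of any measure on $X_\vartheta$, can still be injected through periodic measure representations: a direct computation gives $R^{\overline\mu_a}=\phi(a)=e_a$ and hence $\pi^{\overline\mu_a}=e_a$, while by \Cref{LEM:interval-proportions-under-RS} the action of $\vartheta_{\mathbf P}$ on periodic measures realises exactly the matrix $Q(\mathbf P)$ on interval proportion vectors.

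Concretely, for $N\in\N$ and $0\leqslant k\leqslant N$ I would set
\[
\omega^{(N)}_k=\vartheta_{\mathbf P_{m+k}}\bigl(\vartheta_{\mathbf P_{m+k+1}}(\cdots\vartheta_{\mathbf P_{m+N}}(a)\cdots)\bigr),\qquad \sigma^{(N)}_k=\overline\mu_{\omega^{(N)}_k},
\]
so that $\omega^{(N)}_k=\vartheta_{\mathbf P_{m+k}}(\omega^{(N)}_{k+1})$. Iterating \Cref{LEM:interval-proportions-under-RS} down from $\pi^{\overline\mu_a}=e_a$ yields $\pi^{\sigma^{(N)}_k}=Q_{[m+k,m+N]}e_a$; in particular $\pi^{\sigma^{(N)}_0}=Q_{[m,m+N]}e_a$. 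The crucial structural point is that the periodic substitution step coincides, in the bulk, with the transfer operator: evaluating $\sigma^{(N)}_k$ on an inflation-word cylinder $[(a_1,v_1)\cdots(a_r,v_r)]$ returns the frequency with which these level-one inflation words sit over the base letters $a_1\cdots a_r$, which by independence of the substitution randomness equals $\sigma^{(N)}_{k+1}([a_1\cdots a_r])\,\mathbb P[\vartheta_{\mathbf P_{m+k}}(a_1\cdots a_r)=v_1\cdots v_r]$ up to a boundary error of order $O(1/\mathbb E[|\omega^{(N)}_{k+1}|])$. This is precisely the defining relation \eqref{EQ:substitution-on-measure} for $\vartheta_{\mathbf P_{m+k}}(\sigma^{(N)}_{k+1})$.

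I would then run a diagonal compactness argument over $(n_j)$ and $k$, extracting weak limits $\mu_{m+k}=\lim_j\sigma^{(n_j)}_k$. Since every realisation of $\omega^{(N)}_k$ lies in $\mc L_\vartheta$ and their lengths tend to infinity, each $\mu_{m+k}$ is supported on $X_\vartheta$ and hence lies in $\mc M$. Because the boundary error above vanishes along the diagonal, the cylinder identity passes to the limit and shows that the induced measure of $\mu_{m+k}$ on $A=\vartheta(X_\vartheta)$ equals $\vartheta_{\mathbf P_{m+k}}(\mu_{m+k+1})$; by Kac's formula (uniqueness of the $S$-invariant measure with a given induced measure) this forces $\mu_{m+k}=\trans_{\mathbf P_{m+k}}(\mu_{m+k+1})$, so $(\mu_{m+k})_{k\geqslant 0}$ is adapted. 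Finally, weak continuity of $\mu\mapsto\pi^\mu$, which depends only on the clopen cylinder values $\mu([b])$, gives $\pi^{\mu_m}=\lim_j Q_{[m,m+n_j]}e_a=\pi$, as required.

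I expect the main obstacle to be making the bulk coincidence of the periodic substitution and the transfer operator rigorous: one must control the boundary contributions to the inflation-word frequencies of $\sigma^{(N)}_k$ uniformly enough that the defining relation \eqref{EQ:substitution-on-measure} holds \emph{exactly} in the weak limit, and simultaneously verify that the limit measures are genuinely supported on $X_\vartheta$ so that $\trans_{\mathbf P_{m+k}}$ is defined on them and the conclusion $\mu_{m+k}=\trans_{\mathbf P_{m+k}}(\mu_{m+k+1})$ is meaningful. By comparison, the interval-proportion bookkeeping via \Cref{LEM:interval-proportions-under-RS} and the compactness/diagonal extraction are routine.
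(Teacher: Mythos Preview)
Your approach is a genuine alternative to the paper's, and the interval-proportion bookkeeping via \Cref{LEM:interval-proportions-under-RS} together with the diagonal extraction are indeed routine. However, the obstacle you flag is real, and slightly worse than you indicate. The periodic measures $\sigma^{(N)}_k$ are in general \emph{not} supported on $X_\vartheta$: the periodic point $w^\Z$ for $w\in\vartheta^n(a)$ lies in $X_\vartheta$ only if arbitrarily long powers of $a$ are legal, which typically fails. Consequently the ``inflation-word cylinder'' $[(a_1,v_1)\cdots(a_r,v_r)]$, which is defined through the recognisability conjugacy on $A\subset X_\vartheta$ (\Cref{LEM:conjugation}), is not a set on which $\sigma^{(N)}_k$ can be directly evaluated. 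One could try to replace it by a clopen subset of $\mc A^\Z$, but the recognisability data extends continuously only on $X_\vartheta$ (\Cref{LEM:recognisability-continuous}), so passing the relation \eqref{EQ:substitution-on-measure} to the limit still requires a separate argument translating ``inflation-word frequencies in the periodic word $w^\Z$'' into honest cylinder values for the limit measure.

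The paper circumvents this entirely by a two-stage construction. It first extracts, along a further subsequence, a limit $\pi'=\lim_j Q_{[n_j+1,n_{2j}]}e_a$ and uses the periodic measures $\overline\mu_{\omega_j}$ with $\omega_j=\vartheta^{n_{2j}-n_j}_{\mathbf P_{n_j+1}\cdots\mathbf P_{n_{2j}}}(a)$ only to manufacture a single measure $\mu\in\mc M$ with $\pi^\mu=\pi'$. It then applies the already-defined transfer operators $\trans_{\mathbf P_1}\circ\cdots\circ\trans_{\mathbf P_{n_j}}$ to this fixed $\mu$ and passes to a further accumulation point; the adapted structure drops out from continuity of $\trans_{\mathbf P}$ and $\Pi$ (\Cref{LEM:continuous-transfer-operator}) with no boundary-error analysis whatsoever. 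The identity $\pi^{\mu_m}=\pi$ follows because $Q_{[m,n_j]}\pi'$ is asymptotically $Q_{[m,n_{2j}]}e_a$, a subsequence of the original convergent sequence. Your approach can likely be completed, but the paper's decoupling of ``produce a measure in $\mc M$'' from ``apply transfer operators'' buys a substantially shorter and more robust argument.
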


\begin{proof}
    Let $(n_j)_{j \in \N}$ be a strictly increasing subsequence such that $\pi = \lim_{j \to \infty} Q_{[m,n_j]} e_a$. Up to choosing a further subsequence, we can assume that the sequence of probability vectors $(Q_{[n_j+1,n_{2j}]} e_a)$ converges to some vector $\pi'$. We write $k_j = n_{2j} - n_j$ and note that $Q_{[n_j+1,n_{2j}]}$ is the geometric substitution matrix of the probability choice
    \[
    \mathbf{P}^{[j]}:= \mathbf{P}_{n_j+1} \cdots \mathbf{P}_{n_{2j}}.
    \]
    We consider the sequence of random words $(\omega_j)_{j \in \N}$ with $\omega_j = \vartheta^{k_j}_{\mathbf{P}^{[j]}}(a)$ for all $j \in \N$. Again up to restricting to a subsequence, we can assume that this sequence has a limit measure $\mu = \lim_{j \to \infty} \overline{\mu}_{\omega_j}$. By \Cref{LEM:interval-proportions-under-RS}, the interval proportion vector of $\overline{\mu}_{\omega_j}$ is given by
    \[
    \pi^{\overline{\mu}_{\omega_j}} = Q(\mathbf{P}^{[j]}) \pi^{\overline{\mu}(a)} = Q_{[n_j+1,n_{2j}]} e_a \xrightarrow{j \to \infty} \pi',
    \]
    implying that $\pi^\mu = \pi'$. Since all realisations of $\omega_j$ are legal words, we further know that $\mu$ is supported on $X_{\vartheta}$. 
    Let $\mu_1$ be an accumulation point of the sequence $(\mu_1^{(j)})_{j \in \N}$ with
    \[
    \mu^{(j)}_1 = \trans_{\mathbf{P}_1} \circ \ldots \circ \trans_{\mathbf{P}_{n_j}}(\mu).
    \]
    We claim that $(\Pi^n(\mu_1))_{n \in \N_0}$ is a $(\vartheta,\mc P)$-adapted sequence.
    Indeed, for $n_j \geqslant k$, we have that 
    \[
    \mu_k^{(j)} := \Pi^{k-1}(\mu_1^{(j)}) = \trans_{\mathbf{P}_k} \circ \ldots \circ \trans_{\mathbf{P}_{n_j}}(\mu)
    \]
    converges along the same subsequence to some $\mu_k$ and satisfies 
    \[
    \mu_1^{(j)} = \trans_{\mathbf{P}_1} \circ \ldots \circ \trans_{\mathbf{P}_{k-1}}(\mu_k^{(j)}),
    \]
    which persists in the limit along the corresponding subsequence. Hence $(\mu_k)_{k \in \N}$ is indeed $(\vartheta,\mc P)$-adapted. 
    For the interval proportions of $\mu_m^{(j)}$, we obtain
    \[
    \pi^{\mu_m^{(j)}} = Q_{[m,n_j]} \pi^{\mu} = Q_{[m,n_j]} \pi'.
    \]
    Since $\pi'= \lim_{j \to \infty} Q_{[n_j+1,n_{2j}]} e_a$ and the norm of $Q_{[m,n_j]}$ is uniformly bounded we obtain that
    \[
    \pi^{\mu_m} = \lim_{j \to \infty} Q_{[m,n_j]}\pi' = \lim_{j \to \infty} Q_{[m,n_{2j}]} e_a = \pi,
    \]
    as claimed.
\end{proof}

\begin{prop}
\label{PROP:unique-inverse-limit-ergodic}
 There is a unique $(\vartheta,\mc P)$ inverse limit measure if and only if $Q(\mc P)$ is ergodic.
\end{prop}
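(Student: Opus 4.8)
The plan is to exploit the link established in \Cref{COR:interval-proportion-transformation} between a $(\vartheta,\mc P)$-adapted sequence $(\mu_n)_{n \in \N}$ and the Markov sequence $Q(\mc P) = (Q_n)_{n \in \N}$: the relation $\mu_n = \trans_{\mathbf{P}_n}(\mu_{n+1})$ forces $\pi^{\mu_n} = Q_n \pi^{\mu_{n+1}}$, and iterating gives $\pi^{\mu_n} = Q_{[n,n+k]}\, \pi^{\mu_{n+k+1}}$ for every $k$, where I write $Q_{[n,n+k]} = Q_n \cdots Q_{n+k}$. Since $\pi^\mu$ and the letter frequency vector $R^\mu$ determine one another, the question of which frequencies an inverse limit measure can exhibit is governed entirely by the asymptotics of these matrix products, which reduces both implications to statements about $Q(\mc P)$.

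For the forward direction, assume $Q(\mc P)$ is ergodic, so for each $n$ there is a probability vector $\pi^n$ with $Q_{[n,n+k]} \to \pi^n \mathds{1}^T$ as $k \to \infty$. Given any adapted sequence, I would pass to the limit in $\pi^{\mu_n} = Q_{[n,n+k]}\pi^{\mu_{n+k+1}}$: as the $\pi^{\mu_{n+k+1}}$ are probability vectors (hence bounded, with $\mathds{1}^T \pi^{\mu_{n+k+1}} = 1$) and the matrices converge in operator norm to $\pi^n \mathds{1}^T$, the right-hand side converges to $\pi^n$, forcing $\pi^{\mu_n} = \pi^n$, and hence fixing $R^{\mu_n}$, for every $n$. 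Thus all adapted sequences share the same letter frequencies at every level. I would then observe, using \Cref{LEM:T-consistency} and \eqref{EQ:substitution-on-measure}, that the induced measure of $\mu_1 = \trans_{\mathbf{P}_1 \cdots \mathbf{P}_n}(\mu_{n+1})$ on $A_n$ is given explicitly by $(\mu_1)_{A_n}([(a,v)]) = R^{\mu_{n+1}}_a (\mathbf{P}_1 \cdots \mathbf{P}_n)_{v,a}$, which depends on the adapted sequence only through $R^{\mu_{n+1}}$. Consequently any two inverse limit measures $\mu_1, \mu_1'$ satisfy $(\mu_1)_{A_n} \sim (\mu_1')_{A_n}$ for all $n$, that is $\mu_1' \in \mc M[\mu_1,n]$ for every $n$; applying \Cref{LEM:convergence-along-inflation-words} to the constant sequence $\mu_1'$ then yields $\mu_1' = \mu_1$, proving uniqueness.

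For the converse I argue contrapositively: if $Q(\mc P)$ is not ergodic, I produce two distinct inverse limit measures. By \Cref{THM:ergodicity-characterisation_and_conditions} there is some $m$ with $\delta(Q_{[m,m+k]}) \not\to 0$; since $\delta(Q_{[m,m+k]})$ is non-increasing in $k$ (by submultiplicativity of Dobrushin's coefficient together with $\delta \leqslant 1$), its limit is some $d > 0$. As $\delta$ is the maximum of variation distances over the finitely many pairs of columns, some fixed pair of letters $a,b$ realises this maximum for infinitely many $k$, and along a further subsequence (by compactness of the simplex) $Q_{[m,m+k]} e_a \to \pi_a$ and $Q_{[m,m+k]} e_b \to \pi_b$ with $d_V(\pi_a,\pi_b) \geqslant d > 0$, so $\pi_a \neq \pi_b$. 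Then \Cref{LEM:P-accumulations-give-measures} furnishes $(\vartheta,\mc P)$-adapted sequences $(\mu_n)$ and $(\mu_n')$ with $\pi^{\mu_m} = \pi_a$ and $\pi^{\mu_m'} = \pi_b$. Since $\pi_a \neq \pi_b$ gives $R^{\mu_m} \neq R^{\mu_m'}$ and hence $\mu_m \neq \mu_m'$, and since $\mu_m = \Pi^{m-1}(\mu_1)$ and $\mu_m' = \Pi^{m-1}(\mu_1')$, the inverse limit measures $\mu_1$ and $\mu_1'$ must themselves differ, completing the contrapositive.

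The main obstacle is the forward direction, specifically converting \emph{equality of frequencies at all levels} into \emph{equality of measures}: ergodicity only pins down the low-dimensional data $\pi^{\mu_n}$, whereas uniqueness is a statement about full measures. The bridge is the observation that, thanks to recognisability, the level-$n$ inflation-word statistics of $\mu_1$ are rigidly determined by $R^{\mu_{n+1}}$ and the fixed probability data $\mathbf{P}_1 \cdots \mathbf{P}_n$, combined with the rigidity of \Cref{LEM:convergence-along-inflation-words} that agreement on all $A_n$ forces weak convergence to the common limit. Some care is also needed in the step $Q_{[n,n+k]}\pi^{\mu_{n+k+1}} \to \pi^n$, where convergent matrices are applied to a moving bounded sequence of vectors; this is legitimate precisely because these vectors are probability vectors, so $\mathds{1}^T \pi^{\mu_{n+k+1}} = 1$ throughout.
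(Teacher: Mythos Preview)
Your proposal is correct and follows essentially the same approach as the paper's proof: both directions use the same key lemmas (\Cref{COR:interval-proportion-transformation}, \Cref{LEM:convergence-along-inflation-words}, \Cref{LEM:P-accumulations-give-measures}, \Cref{THM:ergodicity-characterisation_and_conditions}) in the same order, and your identification of the main bridge---from equality of interval proportions at every level to $\mu_{A_n} \sim \mu'_{A_n}$ via the explicit formula for the induced measure, followed by \Cref{LEM:convergence-along-inflation-words}---matches exactly what the paper does via \eqref{EQ:induced-fixed-by-sequence}. Your extra remark about why the limit $Q_{[n,n+k]}\pi^{\mu_{n+k+1}} \to \pi^n$ is legitimate is a point the paper leaves implicit.
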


\begin{proof}
First, assume that $Q(\mc P) = (Q_n)_{n \in \N}$ is ergodic. This means that for all $n \in \N$ there is some $\pi^n$ such that $Q_{[n,n+k-1]}$ converges to $\pi^n \mathds{1}^T$ as $k \to \infty$.
Let $(\mu_n)_{n \in \N}$ be a $(\vartheta,\mc P)$-adapted sequence. By \Cref{COR:interval-proportion-transformation}, we have that
\[
\pi^{\mu_{n}} = Q_{[n,n+k-1]} \pi^{\mu_{n+k}} \xrightarrow{k \to \infty} \pi^n.
\]
Let $\mathbf{P}^{(n)} := \mathbf{P}_1 \cdots \mathbf{P}_n$. Then, $\mu = \mu_1$ satisfies that $\mu = \trans_{\mathbf{P}^{(n)}}(\mu_{n+1})$, and therefore the corresponding induced measure on $A_n = \vartheta^n(X_{\vartheta})$ satisfies
\begin{equation}
\label{EQ:induced-fixed-by-sequence}
\mu_{A_n}[(a,v)] = \mu_{n+1}([a]) \mathbf{P}^{(n)}_{v,a},
\end{equation}
for all $a \in \mc A$ and $v \in \vartheta^n(a)$.
If $(\mu_n')_{n \in \N}$ is another $(\vartheta,\mc P)$-adapted sequence with $\mu'=\mu'_1$, we have that
\[
\pi^{\mu_{n}} = \pi^n = \pi^{\mu'_n},
\]
and therefore $R^{\mu_n} = R^{\mu'_n}$ for all $n \in \N$. That is, $\mu_n\sim \mu'_n$ for all $n \in \N$, implying via \eqref{EQ:induced-fixed-by-sequence} that
$\mu_{A_n} \sim \mu'_{A_n}$. Phrased differently, we have that $\mu' \in \mc M[\mu,n]$ for all $n \in \N$.
By \Cref{LEM:convergence-along-inflation-words}, the constant sequence $(\mu')$ therefore converges to $\mu$, meaning that $\mu' = \mu$. We conclude that there can be only one $(\vartheta,\mc P)$ inverse limit measure.

Conversely, assume that $Q(\mc P) = (Q_n)_{n \in \N}$ is not ergodic. By \Cref{THM:ergodicity-characterisation_and_conditions}, there exists an $m \in \N$ such that $\delta(Q_{[m,m+k]})$ does not converge to $0$ as $k \to \infty$. Since the sequence is non-increasing, there exists a $c>0$ such that $\delta(Q_{[m,m+k]}) > c$ for all $k \in \N$. In particular, we can find $a,b \in \mc A$ such that 
\[
d_V(Q_{[m,n]}e_a, Q_{[m,n]}e_b) > c >0
\]
for all $n$ in some strictly increasing sequence $(n_j)_{j \in \N}$. Up to choosing a subsequence, we may assume that both $\pi = \lim_{j \to \infty} Q_{[m,n_j]}e_a$ and $\pi'= \lim_{j \to \infty} Q_{[m,n_j]}e_b$ exist as a limit. By construction, we have that $d_V(\pi,\pi') \geqslant c > 0$ and hence $\pi \neq \pi'$. 
By \Cref{LEM:P-accumulations-give-measures}, we can find corresponding $(\vartheta,\mc P)$-adapted sequences $(\mu_n)_{n \in \N}$ and $(\mu_n')_{n \in \N}$ such that $\pi^{\mu_m} = \pi$ and $\pi^{\mu'_m} = \pi'$. Since $\pi$ and $\pi'$ are different, so are $\mu_m$ and $\mu_m'$, and ultimately $\mu_1 \neq \mu_1'$. This implies that there are several $(\vartheta,\mc P)$ inverse limit measures.
\end{proof}

As a direct consequence of \Cref{PROP:unique-inverse-limit-ergodic} and \Cref{THM:ergodicity-characterisation_and_conditions}, we obtain the following list of sufficient conditions for the uniqueness of inverse limit measures.

\begin{corollary}
\label{COR:unique-inverse-limit-conditions}
There is a unique $(\vartheta,\mc P)$ inverse limit measure for $\mc P = (\mathbf{P}_n)_{n \in \N}$ if any of the following hold.
\begin{enumerate}
\item $\vartheta$ is compatible.
\item There is a primitive matrix $M$ such that $\lim_{n \to \infty} M(\mathbf{P}_n) = M$.
\item There is a primitive matrix $Q$ such that $\lim_{n \to \infty} Q(\mathbf{P}_n) = Q$.
\item There is some $n \in \N$ such that all marginals of $\vartheta^n$ have a strictly positive substitution matrix.
\item $\vartheta$ is defined on a binary alphabet $\mc A$.
\end{enumerate}
\end{corollary}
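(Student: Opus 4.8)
The plan is to reduce every item to the ergodicity of the $\mc P$-Markov sequence $Q(\mc P) = (Q(\mathbf{P}_n))_{n \in \N}$ and then invoke \Cref{PROP:unique-inverse-limit-ergodic} together with the characterisation in \Cref{THM:ergodicity-characterisation_and_conditions}. The first three items are essentially direct. For (1), compatibility forces every $v \in \vartheta(b)$ to have the same Abelianisation, so $M(\mathbf{P}_n)$ — and hence $Q(\mathbf{P}_n)$ — is a fixed matrix independent of $n$; primitivity of $\vartheta$ makes this matrix primitive, so the constant sequence converges to a primitive limit. For (2) I would use $\Length_a M(\mathbf{P})_{ab} = \lambda \Length_b\, Q(\mathbf{P})_{ab}$ to transport $M(\mathbf{P}_n) \to M$ into $Q(\mathbf{P}_n) \to Q$, where $Q$ has the same zero pattern as $M$ and is therefore primitive; this is exactly the hypothesis of (3). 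In all three cases the sequence converges to a primitive matrix, so condition (2) of \Cref{THM:ergodicity-characterisation_and_conditions} yields ergodicity.

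For (4) the key observation is a multiplicativity: since $M(\mathbf{P}\mathbf{P}') = M(\mathbf{P})M(\mathbf{P}')$ and $\Length_a M_{ab} = \lambda \Length_b Q_{ab}$, one checks that $Q(\mathbf{P}_k)\cdots Q(\mathbf{P}_{k+n-1})$ equals the geometric substitution matrix $Q(\vartheta^n_{\mathbf{P}_k \cdots \mathbf{P}_{k+n-1}})$ of $\vartheta^n$. If all marginals of $\vartheta^n$ have strictly positive substitution matrix, then every level-$n$ inflation word contains every letter, so $M(\vartheta^n_{\mathbf{Q}})$ — and hence the length-$n$ block above — is strictly positive for every probability choice $\mathbf{Q}$ on $\vartheta^n$. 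Since the probability choices for $\vartheta^n$ form a compact set on which $\mathbf{Q} \mapsto \delta(Q(\vartheta^n_{\mathbf{Q}}))$ is continuous and strictly below $1$ (a strictly positive stochastic matrix has no two columns with disjoint support), I obtain a uniform bound $\delta(Q_{[k,k+n-1]}) \leq 1 - \varepsilon$ with $\varepsilon > 0$ independent of $k$. Grouping $Q_{[m,m+k]}$ into consecutive blocks of length $n$ and using submultiplicativity of $\delta$ then gives $\delta(Q_{[m,m+k]}) \to 0$ as $k \to \infty$ for every $m$, which is ergodicity.

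Item (5) is the one I expect to be the main obstacle, and it is where recognisability enters decisively. On a binary alphabet a column-stochastic $2\times 2$ matrix $Q$ satisfies $\delta(Q) = 1$ if and only if $Q$ is a permutation matrix (the identity or the transposition), so it suffices to rule these out for $Q(\mathbf{P})$, uniformly in $\mathbf{P}$. If $Q(\mathbf{P})$ were the identity, then $M(\mathbf{P})$ would be diagonal, forcing every positively weighted word in $\vartheta(a)$ to be a pure power, necessarily $a^\lambda$ with $\lambda \in \N$ by geometric compatibility, and likewise $b^\lambda \in \vartheta(b)$; but then $a^\Z \in X_\vartheta$ admits, taking $x = a^\Z$, $v_i = a^\lambda$ for all $i$, and $0 \leq k < \lambda$, at least $\lambda \geq 2$ distinct recognisability data, contradicting \Cref{DEF:recognisability}. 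The transposition case is handled identically after passing to $\vartheta^2$, which is again recognisable and for which the analogous argument produces $a^{\lambda^2} \in \vartheta^2(a)$. Hence $\delta(Q(\mathbf{P})) < 1$ for every probability choice $\mathbf{P}$, and by compactness of the space of probability choices together with continuity of $\mathbf{P} \mapsto \delta(Q(\mathbf{P}))$ there is a uniform bound $\sup_{\mathbf{P}} \delta(Q(\mathbf{P})) < 1$. Consequently $\prod_{n} \delta(Q(\mathbf{P}_n)) = 0$, and condition (1) of \Cref{THM:ergodicity-characterisation_and_conditions} gives ergodicity. The delicate point throughout (5) is precisely this use of recognisability to forbid pure-power inflation words: without it, degenerate probability choices collapsing onto such words would produce genuinely distinct interval proportion vectors and hence several inverse limit measures.
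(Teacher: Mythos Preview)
Your proof is correct and follows the same strategy as the paper for items (1)--(4). For (5) there is a minor variation: the paper reduces (5) to (4) by showing that recognisability forces all marginals of $\vartheta^2$ to have strictly positive substitution matrix, whereas you bound $\delta(Q(\mathbf{P}))$ away from $1$ directly at level one via compactness; both arguments rest on the same observation that recognisability forbids pure-power inflation words, so the difference is cosmetic.
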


\begin{proof}
    The first condition is a special case of the second condition. 
    The second and third condition are equivalent and it follows directly from \Cref{THM:ergodicity-characterisation_and_conditions} that they imply the ergodicity of the corresponding Markov chain.
    If the fourth condition holds, we obtain that $\delta(Q({\mathbf{P}^{(n)}}))$ is bounded away from $1$ for all probability choices $\mathbf{P}^{(n)}$ of $\vartheta^n$. Using the submultiplicativity of $\delta$, this implies $\lim_{m \to \infty} \delta(Q_n \cdots Q_{n+m}) =0$ for $Q_n = Q(\mathbf{P}_n)$ and hence we obtain ergodicity.
    Finally, we show that the fifth condition is a special case of the fourth condition. 
    If $\vartheta$ is defined on a binary alphabet, recognisability enforces some combinatorial structure. More precisely, if $a^n \in \vartheta(a)$ or $b^n \in \vartheta(b)$, geometric compatibility (with inflation factor $\lambda>1$) enforces that $n \geqslant 2$. This would imply that $a^\Z$ or $b^\Z$ is a non-recognisable element of the subshift. On the other hand, if $b^n \in \vartheta(a)$ and $a^m \in \vartheta(b)$, it must be $n>1$ or $m>1$ and we obtain the same contradiction to recognisability. Hence, we can pick $a$ such that every word in $\vartheta(a)$ contains both $a$ and $b$, and every word in $\vartheta(b)$ contains at least one $a$. It follows that all marginals of $\vartheta^2$ have a strictly positive substitution matrix.
\end{proof}

\section{Uniformity measure and intrinsic ergodicity}
\label{SEC:uniformity_intrinsic_ergodicity}

We still assume that $\vartheta$ is a primitive, recognisable and geometrically compatible random substitution. 
Based on the previous discussion, we show that uniformity measures are the $(\vartheta,\mc P)$ limit measures for some explicit $\mc P$. We start with a slight generalisation of the $n$-productivity distributions in \Cref{DEF:n-productivity}.

\begin{definition}
For $n,m \in \N$, let $\mathbf{P}^{n,m}$ denote the $n$-productivity distribution for $\vartheta^m$, that is,
\[
\mathbf{P}^{n,m}_{v,a} = \frac{\# \vartheta^n(v)}{ \sum_{u \in \vartheta^m(a)} \# \vartheta^n(u)} = \frac{\# \vartheta^n(v)}{\# \vartheta^{n+m}(a)},
\]
for all $v \in \vartheta^m(a)$. In particular, $\mathbf{P}^{0,m}$ represents the uniform distribution on each $\vartheta^m(a)$.
\end{definition}

Note that the equality $\sum_{u \in \vartheta^m(a)} \# \vartheta^n(u) = \# \vartheta^{n+m}(a)$ makes use of the disjoint set condition, implied by recognisability of $\vartheta^m$. 
For a word $u = u_1 \cdots u_r$ and $v \in \vartheta^m(u)$, there exists, due to unique realisation paths, a unique decomposition $v = v_1 \cdots v_r$ with $v_i \in \vartheta(u_i)$ for all $1\leqslant i \leqslant r$. We obtain that
\[
\mathbb{P} \bigl[ \vartheta^m_{\mathbf{P}^{n,m}}(u) = v \bigr]
= \prod_{i=1}^r \frac{\# \vartheta^n(v_i)}{\# \vartheta^{n+m}(u_i)} = \frac{\# \vartheta^n(v)}{\# \vartheta^{n+m}(u)},
\]
again using unique realisation paths in the last step.
Intuitively, uniformity measures are those that exhibit a uniform distribution of inflation words on each level. More formally, this can be formulated as follows.

\begin{prop}
\label{PROP:uniformity-measures-distributions}
$\nu \in \mc M$ is a uniformity measure if and only if $\nu \in \mc M[\mathbf{P}^{0,n}]$ for all $n \in \N$. 
\end{prop}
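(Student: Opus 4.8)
The plan is to phrase everything through \Cref{COR:Uniformity_char}, so that $\nu$ is a uniformity measure exactly when $\nu_{A_n}$ is $\Gamma_n$-invariant for every $n\in\N$, and to record that, since $\mathbf P^{0,n}$ is the uniform distribution on each $\vartheta^n(a)$, the condition $\nu\in\mc M[\mathbf P^{0,n}]$ is equivalent to the single-site statement that $\nu_{A_n}([(a,v)])=\Pi^n(\nu)([a])/\#\vartheta^n(a)$ is independent of $v\in\vartheta^n(a)$. With this dictionary the proposition reduces to showing that $\nu_{A_n}$ is $\Gamma_n$-invariant for all $n$ if and only if each $\nu_{A_n}$ equidistributes $1$-cylinders among inflation words of a common type.

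The forward implication is immediate: given $\Gamma_n$-invariance, fix $a$ and $v,v'\in\vartheta^n(a)$ and let $\alpha\in\operatorname{Sym}(\vartheta^n(a))$ be the transposition exchanging $v$ and $v'$; since $f_\alpha^{-1}([(a,v)])=[(a,v')]$ and $f_\alpha\in\Gamma_{n,a}\subseteq\Gamma_n$, invariance yields $\nu_{A_n}([(a,v)])=\nu_{A_n}([(a,v')])$, which is exactly $\nu\in\mc M[\mathbf P^{0,n}]$. For the converse I will upgrade single-site equidistribution at all levels to the full product structure
\[
\nu_{A_n}([(u_1,v_1)\cdots(u_r,v_r)])=\Pi^n(\nu)([u_1\cdots u_r])\prod_{i=1}^{r}\frac{1}{\#\vartheta^n(u_i)};
\]
that is, I will show the left-hand side depends only on the skeleton $u=u_1\cdots u_r$. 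Once this is established, $\Gamma_n$-invariance follows because every $f\in\Gamma_n$ sends $[(u_1,v_1)\cdots(u_r,v_r)]$ to a cylinder with the same skeleton, and cylinders generate the Borel $\sigma$-algebra.

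To prove skeleton-dependence I fix a large level $N>n$ and compare $\nu_{A_n}$ with its induced measure $\nu_{A_N}$ on $A_N\subseteq A_n$ through Kac's formula, the bounded return time $\rho=r_{A_N}$ being the number of level-$n$ inflation words inside one level-$N$ inflation word. Applied to $g=\mathds{1}_{[(u_1,v_1)\cdots(u_r,v_r)]}$, Kac's formula writes $\nu_{A_n}([(u_1,v_1)\cdots(u_r,v_r)])$ as $\nu_{A_N}(\rho)^{-1}$ times the $\nu_{A_N}$-expected number of occurrences of the pattern among the level-$n$ words starting inside the origin level-$N$ word. I split these occurrences into those lying entirely inside a single level-$N$ word and the at most $r-1$ straddling ones. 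For the interior occurrences, $\nu\in\mc M[\mathbf P^{0,N}]$ gives $\nu_{A_N}([(c,w)])=\Pi^N(\nu)([c])/\#\vartheta^N(c)$ independent of $w$; using the disjoint set condition and unique realisation paths, each $w\in\vartheta^N(c)$ is a free independent choice of a level-$n$ word in $\vartheta^n(s_j)$ at each position $j$ of its level-$(N-n)$ skeleton $s\in\vartheta^{N-n}(c)$, so the number of interior occurrences of the pattern, summed over $w\in\vartheta^N(c)$, is a sum over such skeletons $s$ and positions $j_0$ with $s_{[j_0,j_0+r-1]}=u_1\cdots u_r$ of $\prod_{j\notin[j_0,j_0+r-1]}\#\vartheta^n(s_j)$, which is manifestly independent of the chosen $v_i$. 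Hence the interior contribution is identical for any two admissible configurations over the same skeleton, while the straddling contribution is at most $(r-1)/\nu_{A_N}(\rho)$. Since level-$N$ words contain of order $\lambda^{N-n}$ level-$n$ words we have $\nu_{A_N}(\rho)\to\infty$, so the boundary term vanishes as $N\to\infty$, forcing $\nu_{A_n}([(u_1,v_1)\cdots(u_r,v_r)])$ to be independent of the $v_i$.

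The main obstacle is precisely this converse. Single-site equidistribution at a fixed level is strictly weaker than $\Gamma_n$-invariance, since it controls only $1$-cylinders and not the joint law of inflation words across a window; the symmetry of the joint law is recovered only by passing to higher levels, where the delicate point is that the straddling (boundary) occurrences must be shown to be asymptotically negligible while the interior occurrence counts must be seen to be genuinely insensitive to the inflation-word content. Setting up the Kac comparison between $\nu_{A_n}$ and $\nu_{A_N}$ cleanly, and isolating the $v$-independent interior count via the disjoint set condition, is where the real work lies.
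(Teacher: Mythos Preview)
Your argument is correct and complete, but it takes a different route from the paper's proof of the converse direction. The paper does not establish the full product structure for $\nu_{A_n}$ on $r$-cylinders directly; instead it exploits the nesting $\Gamma_n\subseteq\Gamma_m$ for $m\geqslant n$ and observes that any $f\in\Gamma_n$, when viewed on $A_m$, acts by a permutation $\alpha_{m,b}$ of each letter block $\{(b,w):w\in\vartheta^m(b)\}\subset\mc B_m$. Single-site equidistribution at level $m$ therefore yields $(\nu\circ f)_{A_m}\sim\nu_{A_m}$ immediately, and then \Cref{LEM:convergence-along-inflation-words} forces $\nu\circ f=\nu$. This is a soft two-line argument once the nesting observation is in place.

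Your approach is more computational but also more informative: by the Kac/counting comparison between $\nu_{A_n}$ and $\nu_{A_N}$ and the combinatorial identity for interior occurrence counts, you actually prove the stronger statement that $\nu_{A_n}([(u_1,v_1)\cdots(u_r,v_r)])$ depends only on the skeleton $u$. This product structure is strictly stronger than $\Gamma_n$-invariance (the shuffle group only permutes inflation words \emph{globally}, so it cannot detect, say, $[(a,v)(a,v)]$ versus $[(a,v)(a,w)]$), and in the paper it is recovered only later, as a consequence of the inverse-limit characterisation in \Cref{THM:max-geometric-entropy-is-uniform}. So your method buys an explicit structural statement at the cost of a longer direct computation, while the paper's method is shorter but defers the product structure to subsequent results.
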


\begin{proof}
    Recall from \Cref{COR:Uniformity_char} that uniformity measures are precisely those such that $\nu_{A_n}$ is invariant under $\Gamma_n$ for all $n\in \N$.
    The statement $\nu \in \mc M[\mathbf{P}^{0,n}]$ can be expressed equivalently by
    \begin{equation}
    \label{EQ:measure-equality-on-inflation-words}
    \nu_{A_n}([a,v]) = \nu_{A_n}([a,u])
    \end{equation}
    for all $a \in \mc A$, $n \in \N$ and $u,v \in \vartheta^n(a)$. Each of the automorphisms $f_\alpha \in \Gamma_{a,n}$ leaves $A_n \subset \mc B_n^\Z$ invariant by construction, and acts on it by a permutation of the letters $\{(a,v): v\in \vartheta^n(a)\} \subset \mc B_n$. Hence, if $\nu$ is a uniformity measure, we obtain
    \[
    (\nu \circ f_\alpha)_{A_n}([a,v]) = \nu_{A_n} \circ f_\alpha ([a,v]) = \nu_{A_n}([a,\alpha(v)]).
    \]
    Choosing $\alpha\in \operatorname{Sym}(\vartheta^n(a))$ with $\alpha(v) = u$ reproduces \eqref{EQ:measure-equality-on-inflation-words} and we conclude that $\nu \in \mc M[\mathbf{P}^{0,n}]$. 
    Conversely assume that $ \nu \in \mc M[\mathbf{P}^{0,n}]$ for all $n \in \N$. Let $n\in \N$ and $f \in \Gamma_n$. Since the groups $\Gamma_n$ are nested, we see that $f$ leaves $A_m$ invariant for all $m \geqslant n$, and it acts via a permutation $\alpha_{m,b}$ on $\{(b,v):v \in \vartheta^m(b) \}$ for every $b \in \mc A$. Hence,
    \[
    (\nu\circ f)_{A_m}([b,v]) = \nu_{A_m} ([b,\alpha_{m,b}(v)]) = \nu_{A_m}([b,v]),
    \]
    due to \eqref{EQ:measure-equality-on-inflation-words}. From this it follows that $(\nu \circ f)_{A_m} \sim \nu_{A_m}$ for all $m \geqslant n$. This property enforces $\nu = \nu \circ f$ by \Cref{LEM:convergence-along-inflation-words}, and it follows that $\nu$ is a uniformity measure.
\end{proof}

In the following, we prove several useful consistency relations satisfied by the $n$-productivity distributions.
\begin{lemma}
\label{LEM:productivity-splitting}
For all $n,k,m \in \N_0$, we have that
\[
\mathbf{P}^{n,k+m}
= \mathbf{P}^{n,k} \mathbf{P}^{n+k,m}.
\]
In particular, for all $n \in \N_0$ and $k \in \N$,
\[
\mathbf{P}^{n,k} = \mathbf{P}^{n,1} \mathbf{P}^{n+1,1} \cdots \mathbf{P}^{n+k-1,1}.
\]
\end{lemma}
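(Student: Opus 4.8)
The plan is to prove the identity entrywise, treating both $\mathbf{P}^{n,k+m}$ and the product $\mathbf{P}^{n,k}\mathbf{P}^{n+k,m}$ as countable-state Markov matrices on $\mc A^+ \times \mc A^+$, as set up in the preliminaries. The first step is to record the closed form of the extended matrices on multi-letter inputs. For a word $v = v_1 \cdots v_r$ and $w \in \vartheta^k(v)$ with its unique realisation decomposition $w = w_1 \cdots w_r$, $w_i \in \vartheta^k(v_i)$, the multiplicative extension of the probability choice together with the factorisation $\#\vartheta^n(w) = \prod_{i=1}^r \#\vartheta^n(w_i)$ (a consequence of unique realisation paths, used already in \eqref{EQ:inflation-word-decomposition}) yields
\[
\mathbf{P}^{n,k}_{w,v} = \prod_{i=1}^r \frac{\#\vartheta^n(w_i)}{\#\vartheta^{n+k}(v_i)} = \frac{\#\vartheta^n(w)}{\#\vartheta^{n+k}(v)}.
\]
Thus the single-letter formula of \Cref{DEF:n-productivity} persists verbatim on all inflation words, which is what makes the contraction over multi-letter intermediate words tractable.

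Next I would fix $a \in \mc A$ and $w \in \vartheta^{k+m}(a)$ and expand
\[
(\mathbf{P}^{n,k}\mathbf{P}^{n+k,m})_{w,a} = \sum_{v \in \vartheta^m(a)} \mathbf{P}^{n,k}_{w,v}\,\mathbf{P}^{n+k,m}_{v,a},
\]
the sum running over $v \in \vartheta^m(a)$ since $\mathbf{P}^{n+k,m}_{v,a} = 0$ otherwise. The crux of the argument is a combinatorial partition: since recognisability of $\vartheta$ passes to $\vartheta^m$, the disjoint set condition gives that the family $\{\vartheta^k(v) : v \in \vartheta^m(a)\}$ is a \emph{disjoint} cover of $\vartheta^{k+m}(a) = \vartheta^k(\vartheta^m(a))$ (this is exactly the identity $\sum_{u \in \vartheta^m(a)} \#\vartheta^k(u) = \#\vartheta^{k+m}(a)$ noted before the lemma). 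Consequently there is a unique $v^{\ast} \in \vartheta^m(a)$ with $w \in \vartheta^k(v^{\ast})$, and every other summand vanishes because $\mathbf{P}^{n,k}_{w,v} = 0$ unless $w \in \vartheta^k(v)$.

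The remaining term then telescopes:
\[
\mathbf{P}^{n,k}_{w,v^{\ast}}\,\mathbf{P}^{n+k,m}_{v^{\ast},a} = \frac{\#\vartheta^n(w)}{\#\vartheta^{n+k}(v^{\ast})}\cdot\frac{\#\vartheta^{n+k}(v^{\ast})}{\#\vartheta^{n+k+m}(a)} = \frac{\#\vartheta^n(w)}{\#\vartheta^{n+k+m}(a)} = \mathbf{P}^{n,k+m}_{w,a},
\]
which is the desired equality; the edge cases $k=0$ or $m=0$ reduce to $\mathbf{P}^{n,0}$ being the identity matrix, so the identity is immediate there. For the displayed product formula I would induct on $k$: the base case $k=1$ is trivial, and applying the first identity with the splitting $k = 1 + (k-1)$ gives $\mathbf{P}^{n,k} = \mathbf{P}^{n,1}\,\mathbf{P}^{n+1,k-1}$, so the inductive hypothesis applied to $\mathbf{P}^{n+1,k-1}$ completes the product $\mathbf{P}^{n,1}\mathbf{P}^{n+1,1}\cdots\mathbf{P}^{n+k-1,1}$.

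The algebraic telescoping and the induction are routine; the single load-bearing step is the disjoint-cover argument that collapses the intermediate sum to one term. This is precisely where recognisability (through the disjoint set condition) is used, and it is the only place where the geometric/recognisability hypotheses enter the proof.
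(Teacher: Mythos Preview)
Your proof is correct and follows essentially the same approach as the paper: both identify the unique intermediate word $v^\ast \in \vartheta^m(a)$ with $w \in \vartheta^k(v^\ast)$, then telescope the resulting product of ratios. You are simply more explicit than the paper about why this $v^\ast$ is unique (the disjoint set condition) and about the multi-letter extension of the $n$-productivity formula, which the paper records just before the lemma.
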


\begin{proof}
 Given $a \in \mc A$, and $w \in \vartheta^{k+m}(a)$ let $v \in \vartheta^m(a)$ be the unique word with $w \in \vartheta^k(v)$. We obtain 
\begin{align*}
(\mathbf{P}^{n,k}\mathbf{P}^{n+k,m})_{w,a}
 & =  \mathbf{P}^{n,k}_{w,v} \mathbf{P}^{n+k,m}_{v,a}
= \frac{\# \vartheta^n(w)}{\# \vartheta^{n+k}(v)}
\frac{\#\vartheta^{n+k}(v)}{\# \vartheta^{n+k+m}(a)}
\\ &= \frac{\# \vartheta^n(w)}{\# \vartheta^{n+k+m}(a)}
= \mathbf{P}^{n,k+m}_{w,a},
\end{align*}
proving the first relation. Iterating this relation gives the second claim.
\end{proof}

\begin{definition}
Given $\mc P = (\mathbf{P}^{n,1} )_{n \in \N_0}$, we call every $(\vartheta,\mc P)$-adapted sequence of measures $(\mu_n)_{n \in \N_0}$ a \emph{uniformity sequence}, and every $(\vartheta,\mc P)$ inverse limit measure is referred to as a \emph{uniformity limit measure}.
\end{definition}

Recall that $\nu$ is a \emph{uniformity measure} if $\nu \in \mc M[\mathbf{P}^{0,n}]$ for all $n \in \N$, due to \Cref{PROP:uniformity-measures-distributions}. We will see that this concept coincides with that of a uniformity limit measure.
In fact, we can show that uniformity (limit) measures are precisely the measures of maximal entropy. This gives us a slight strengthening of \Cref{THM:main_uniformity_maximal}.

\begin{theorem}
\label{THM:max-geometric-entropy-is-uniform}
Let $\vartheta$ be primitive, geometrically compatible and recognisable.
An invariant probability measure on $(X_{\vartheta},S)$ has maximal geometric entropy if and only if it is a uniformity measure if and only if it is a uniformity limit measure.
\end{theorem}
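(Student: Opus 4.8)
The plan is to route all three properties through a single pivot condition. For $\nu \in \mc M$ write $\mu_n = \Pi^n(\nu)$, and let $\mathbf{P}^{0,n}$ be the uniform distribution on each $\vartheta^n(a)$. I will show that maximality, the uniformity property, and the uniformity-limit property are each equivalent to
\begin{equation*}
\nu = \trans_{\mathbf{P}^{0,n}}(\mu_n) \quad \text{for all } n \in \N, \tag{$\star$}
\end{equation*}
where $\trans_{\mathbf{P}^{0,n}}$ is the transfer operator for $\vartheta^n$. Two directions are immediate. If $(\star)$ holds, then the induced measure satisfies $\nu_{A_n}([a,v]) = \mu_n([a])\,\mathbf{P}^{0,n}_{v,a} = \mu_n([a])/\#\vartheta^n(a)$, independently of $v \in \vartheta^n(a)$, so $\nu \in \mc M[\mathbf{P}^{0,n}]$ for every $n$ and $\nu$ is a uniformity measure by \Cref{PROP:uniformity-measures-distributions}. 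Conversely, if $\nu = \mu_0$ for a uniformity sequence, then composing $\mu_0 = \trans_{\mathbf{P}^{0,1}} \circ \cdots \circ \trans_{\mathbf{P}^{n-1,1}}(\mu_n)$ via \Cref{LEM:T-consistency} and \Cref{LEM:productivity-splitting} gives exactly $(\star)$, with $\mu_n = \Pi^n(\nu)$ because $\Pi$ is a left inverse of each transfer operator.

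For the entropy side I would argue as follows. For $(\star) \Rightarrow$ maximality, I apply the higher-power analogue of \Cref{LEM:entropy-maximising-inverse-branch} to $\trans_{\mathbf{P}^{0,n}}(\mu_n)$; since $\mathbf{P}^{0,n}$ is uniform, its inflation-word entropy is $\log \#\vartheta^n(a)$, giving
\[
\hg_\nu = \frac{1}{\lambda^n}\,\hg_{\mu_n} + \frac{1}{\Length R^{\mu_n}}\sum_{a \in \mc A} \mu_n([a]) \,\frac{\log \#\vartheta^n(a)}{\lambda^n}.
\]
The left-hand side is independent of $n$. As $n \to \infty$ the first term vanishes (entropies are uniformly bounded), while $\log \#\vartheta^n(a)/(\lambda^n \Length_a) \to h_{\operatorname{top}}(Y_\vartheta)$ uniformly over the finite alphabet by \Cref{THM:geom-inf-entropy}; since $\Length R^{\mu_n} \in [\min_a \Length_a, \max_a \Length_a]$, the second term converges to $h_{\operatorname{top}}(Y_\vartheta)$, forcing $\hg_\nu = h_{\operatorname{top}}(Y_\vartheta)$. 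For the reverse, the higher-power \Cref{COR:entropy-maximising-uniform} says $\trans_{\mathbf{P}^{0,n}}(\mu_n)$ is the \emph{unique} measure of maximal geometric entropy in $\Pi^{-n}(\mu_n)$; if $\nu$ is maximal then $\nu \in \Pi^{-n}(\mu_n)$ already attains that maximum, so uniqueness yields $(\star)$. Finally, applying $\Pi^k$ to $(\star)$ and using $\mathbf{P}^{0,n} = \mathbf{P}^{0,k}\mathbf{P}^{k,n-k}$ gives $\mu_k = \trans_{\mathbf{P}^{k,n-k}}(\mu_n) = \trans_{\mathbf{P}^{k,1}}(\mu_{k+1})$, so $(\mu_n)$ is a uniformity sequence and $\nu$ is a uniformity-limit measure; this closes $(\star) \Leftrightarrow$ uniformity-limit.

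The one genuinely delicate direction, and the main obstacle, is that a uniformity measure satisfies $(\star)$. By \Cref{PROP:uniformity-measures-distributions} and \Cref{COR:Uniformity_char}, uniformity only provides invariance of $\nu_{A_n}$ under the per-type permutations in $\Gamma_n$, equivalently that $\nu_{A_n}([a,v])$ is independent of $v$. This is strictly weaker than the product identity $\nu_{A_n} = \vartheta^n_{\mathbf{P}^{0,n}}(\mu_n)$ required by $(\star)$, because $\Gamma_n$-invariance a priori permits correlations between inflation words at distinct positions. The key step I would carry out is to upgrade single-cylinder uniformity at all levels to genuine conditional independence: using single-cylinder uniformity at a high level $m$ together with the fact that the level-$m$ tiling is a function of the type sequence alone, one shows that, conditioned on a level-$m$ inflation word occupying a block of positions, the constituent level-$n$ words are independent and uniform. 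Letting $m \to \infty$, any fixed cylinder of $\nu_{A_n}$ eventually lies inside a single level-$m$ word except on an event of measure $O(\lambda^{-m})$ (the density of level-$m$ boundaries), and on the complement the words are independent and uniform; passing to the limit yields $\nu_{A_n}([(a_1,v_1)\cdots(a_r,v_r)]) = \mu_n([a_1\cdots a_r]) \prod_i \#\vartheta^n(a_i)^{-1}$, which is precisely $(\star)$. Combining this with the implications above makes maximality, uniformity, and the uniformity-limit property coincide, as claimed.
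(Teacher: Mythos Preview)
Your cycle through the pivot condition $(\star)$ is correct for the easy implications, and your entropy computation for $(\star)\Rightarrow$ maximality matches the paper's. The divergence is exactly where you flag it: uniformity $\Rightarrow (\star)$.

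The paper sidesteps this obstacle rather than attacking it. For a uniformity measure $\nu$ with $\nu_n = \Pi^n(\nu)$, it does \emph{not} try to prove $\nu = \trans_{\mathbf{P}^{0,n}}(\nu_n)$ directly. Instead it observes that $\trans_{\mathbf{P}^{0,n}}(\nu_n)$ lies in $\mc M[\mathbf{P}^{0,n}] \cap \Pi^{-n}(\nu_n)$, and since $\nu$ itself is in $\mc M[\mathbf{P}^{0,n}]$, this set equals $\mc M[\nu,n]$. Now \Cref{LEM:convergence-along-inflation-words} applies: any sequence of measures in $\mc M[\nu,n]$ converges weakly to $\nu$, so $\trans_{\mathbf{P}^{0,n}}(\nu_n) \to \nu$. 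Applying $\Pi^m$ and the continuity of the transfer operators (\Cref{LEM:continuous-transfer-operator}) then yields $\nu_{m-1} = \trans_{\mathbf{P}^{m-1,1}}(\nu_m)$, so $(\nu_n)$ is adapted and $(\star)$ drops out \emph{a posteriori}. The key point is that membership in $\mc M[\nu,n]$ only constrains the single-cylinder statistics on $A_n$, which is precisely what the uniformity hypothesis provides; no upgrade to conditional independence is ever needed.

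Your direct route is not wrong---the combinatorial fact that the uniform distribution on $\vartheta^m(b)$ induces a uniform product on its level-$n$ constituents (conditionally on their types) is correct, and the boundary-density estimate is valid---but carrying it out rigorously requires relating $\nu_{A_n}|_{A_m}$ to $\nu_{A_m}$ and tracking how the prescribed type sequence $(a_1,\ldots,a_r)$ interacts with the level-$(m-n)$ skeleton, which is some work. The paper's approach trades all of that bookkeeping for a one-line appeal to weak convergence plus continuity; what it buys is that the argument never leaves the level of single-cylinder information, which is the natural currency of the uniformity hypothesis.
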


\begin{proof}
If $\nu$ is a measure of maximal geometric entropy, it maximises in particular the geometric entropy in $\Pi^{-n}(\Pi^n(\nu))$. Hence, by \Cref{COR:entropy-maximising-uniform}, we have that $\nu = \trans_{\mathbf{P}^{0,n}}(\Pi^n(\nu)) \in \mc M[\mathbf{P}^{0,n}]$ for all $n \in \N$. Hence, $\nu$ is a uniformity measure.
If $\nu = \nu_0$ is a uniformity measure, we claim that $(\nu_n)_{n \in \N_0}$ with $\nu_n = \Pi^n(\nu_0)$ is a uniformity sequence. First, we note that, by \Cref{LEM:productivity-splitting},
\[
\trans_{\mathbf{P}^{0,1}} \circ \cdots \circ \trans_{\mathbf{P}^{n-1,1}}(\nu_n) =
\trans_{\mathbf{P}^{0,n}}(\nu_n) \in \mc M[\mathbf{P}^{0,n}] \cap \Pi^{-n}(\nu_n) = \mc M[\nu,n] \cap \Pi^{-n}(\nu_n)
\]
converges to $\nu$ by \Cref{LEM:convergence-along-inflation-words}. By continuity of $\Pi$, applying $\Pi^m$ to this relation
yields that 
\[
\lim_{n \to \infty} \trans_{\mathbf{P}^{m,1}} \circ \cdots \circ \trans_{\mathbf{P}^{n-1,1}}(\nu_n) = \nu_m,
\]
for all $m \in \N$. In particular,
\[
\nu_{m-1} = \lim_{n \to \infty} \trans_{\mathbf{P}^{m-1,1}} \circ \cdots \circ \trans_{\mathbf{P}^{n-1,1}}(\nu_n) = \trans_{\mathbf{P}^{m-1,1}} (\nu_m),
\]
so $(\nu_n)_{n \in \N_0}$ is $(\vartheta,\mc P)$-adapted for $\mc P = (\mathbf{P}^{n,1})_{n \in \N_0}$. Hence, $\nu_0$ is a uniformity limit measure.
Finally, let $\nu_0$ be a uniformity limit measure with uniformity sequence $(\nu_n)_{n \in \N_0}$. We obtain $\nu_0 = \trans_{\mathbf{P}^{0,n}}(\nu_n)$, and hence we can express the geometric entropy of $\nu_0$ via \Cref{COR:entropy-maximising-uniform} as
\[
\hg_{\nu_0} = \frac{1}{\lambda^n} \biggl( \hg_{\nu_n} + \frac{1}{\Length R^{\nu_n}} \sum_{a \in \mc A} \nu_n([a]) \log \# \vartheta^n(a) \biggr),
\]
for all $n \in \N$. By \Cref{THM:geom-inf-entropy}, we have that $\lambda^{-n} \log \# \vartheta^n(a)$ converges to $\Length_a h_{\operatorname{top}}(Y_{\vartheta})$ as $n \to \infty$. Hence, performing this limit in the last relation gives
\[
\hg_{\nu_0} = h_{\operatorname{top}}(Y_{\vartheta})
\]
and we conclude that $\nu_0$ has maximal geometric entropy.
\end{proof}

\begin{corollary}
    Every measure of maximal entropy on $(Y_\vartheta,T)$ has full topological support.
\end{corollary}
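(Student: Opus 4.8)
The plan is to deduce the corollary from the classification of measures of maximal entropy together with a direct computation showing that uniformity measures charge every legal cylinder. First I would use the bijective correspondence $\mu \mapsto \widetilde{\mu}$ between $S$-invariant measures on $X_\vartheta$ and $T$-invariant measures on $Y_\vartheta$, recorded in the preliminaries, together with $h_{\widetilde{\mu}}(Y_\vartheta) = \hg_\mu$ and $h_{\operatorname{top}}(Y_\vartheta) = \sup_\mu \hg_\mu$: these identify every measure of maximal entropy on $Y_\vartheta$ as the lift $\widetilde{\mu}$ of a measure $\mu$ of maximal geometric entropy on $X_\vartheta$, which by \Cref{THM:max-geometric-entropy-is-uniform} is a uniformity measure. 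It therefore suffices to prove (i) that every uniformity measure is fully supported on $X_\vartheta$, and (ii) that the lift of a fully supported measure is fully supported on $Y_\vartheta$. Step (ii) I would dispatch as routine: a basic open set of $Y_\vartheta$ has the form of a cylinder of $X_\vartheta$ crossed with a time interval, and $\mu \times m$ assigns it positive mass as soon as $\mu$ charges every cylinder.

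For step (i), fix a uniformity measure $\nu$ with uniformity sequence $(\nu_n)_{n \in \N_0}$, where $\nu_n = \Pi^n(\nu)$, and let $w \in \mc L_\vartheta$. By the definition of the language, $w$ is a subword of some level-$n$ inflation word $v \in \vartheta^n(a)$ for suitable $n$ and $a$. Since $\nu$ is a uniformity measure, \Cref{PROP:uniformity-measures-distributions} gives $\nu \in \mc M[\mathbf{P}^{0,n}]$, so the induced measure equidistributes the level-$n$ inflation words, $\nu_{A_n}([(a,v)]) = \nu_n([a])/\#\vartheta^n(a)$. I would then apply Kac's formula on $A_n = \vartheta^n(X_\vartheta)$ to $f_w = \sum_{i=0}^{r_{A_n}-1}\mathds{1}_{[w]}\circ S^i$, observing that $f_w \geqslant 1$ on the cylinder $[(a,v)]$ because the copy of $w$ inside $v$ starts within the level-$n$ word at the origin. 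This yields $\nu([w]) = \nu_{A_n}(f_w)/\nu_{A_n}(r_{A_n}) \geqslant \nu_{A_n}([(a,v)])/\nu_{A_n}(r_{A_n})$, with a finite positive denominator, so $\nu([w]) > 0$ provided $\nu_n([a]) > 0$.

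The main obstacle is precisely this residual positivity: I must show that every desubstituted measure $\nu_n$ witnesses every letter, i.e.\ $\nu_n([a]) > 0$ for all $a \in \mc A$, and this is where primitivity enters. Because the relation between $R^{\nu_n}$ and the interval proportion vector $\pi^{\nu_n}$ is a strictly positive rescaling, it is equivalent to prove that $\pi^{\nu_n}$ has strictly positive entries. Using $\nu_n = \trans_{\mathbf{P}^{n,1}}(\nu_{n+1})$ and \Cref{COR:interval-proportion-transformation}, the interval proportions satisfy $\pi^{\nu_n} = Q(\mathbf{P}^{n,1})\cdots Q(\mathbf{P}^{n+k-1,1})\,\pi^{\nu_{n+k}}$ for every $k$. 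Each $\mathbf{P}^{j,1}$ is non-degenerate, so every factor $Q(\mathbf{P}^{j,1})$ shares the support pattern of the deterministic support matrix of $\vartheta$, which is primitive by assumption. The support of such a product of $k$ factors coincides with that of the $k$-th power of this common primitive pattern, hence is all of $\mc A \times \mc A$ once $k$ exceeds the primitivity exponent. Since $\pi^{\nu_{n+k}}$ is a nonzero probability vector, the product then forces every entry of $\pi^{\nu_n}$ to be positive, giving $\nu_n([a]) > 0$ for all $a$. This closes step (i), and together with step (ii) completes the proof.
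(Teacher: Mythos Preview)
Your proof is correct but takes a longer route than the paper's. The paper sidesteps your ``main obstacle''---showing $\nu_n([a]) > 0$ for every letter---by exploiting primitivity combinatorially rather than through the Markov chain. Instead of fixing a single $a$ with $w$ a subword of some $v \in \vartheta^n(a)$ and then separately arguing that $\nu_n([a]) > 0$, the paper observes that primitivity yields a power $n$ such that $w$ occurs as a subword of some realisation of $\vartheta^n(a)$ for \emph{every} $a \in \mc A$. Equidistribution then gives, for each $a$, an inflation word containing $w$ with $\nu_{A_n}$-mass $\nu_n([a])/\#\vartheta^n(a)$; since $\sum_a \nu_n([a]) = 1$, at least one of these is positive and your Kac estimate yields $\nu([w]) > 0$ immediately. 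Your detour through \Cref{COR:interval-proportion-transformation} and the common primitive support pattern of the matrices $Q(\mathbf{P}^{j,1})$ is valid and establishes the stronger intermediate fact that $\pi^{\nu_n}$ is strictly positive for every $n$, but this becomes unnecessary once the combinatorial input is strengthened as above.
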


\begin{proof}
    For every legal word $v$ there exists some power $n \in \N$ such that $v$ is contained in some realisation of $\vartheta^n(a)$ for every $a \in \mc A$, due to primitivity. Every uniformity measure equidistributes the inflation words of level $n$ of every given type. Hence, it assigns positive mass to the cylinder $[v]$. This shows that measures of maximal geometric entropy have full support on $X_{\vartheta}$, and hence their lifts have full support on $Y_\vartheta$.
\end{proof}

We now summarise some of our main results on intrinsic ergodicity, covering in particular \Cref{THM:main_Markov_chain_condition}.

\begin{theorem}
\label{THM:main-intrinsic-ergodicity}
    Let $\vartheta$ be a primitive, geometrically compatible and recognisable random substitution.
    There is a unique uniformity measure $\mu_{\operatorname{u}}$ if and only if the Markov sequence $(Q(\mathbf{P}^{n,1}))_{n \in \N_0}$ is ergodic. In this case, $(Y_\vartheta,T)$ is intrinsically ergodic, and the measure of maximal entropy is $\widetilde{\mu_{\operatorname{u}}}$, the lift of $\mu_{\operatorname{u}}$ under the suspension.
\end{theorem}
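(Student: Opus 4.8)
The plan is to obtain the statement as a short combination of the two main structural results already established, namely the identification of measures of maximal geometric entropy with uniformity measures (\Cref{THM:max-geometric-entropy-is-uniform}) and the ergodicity criterion for uniqueness of inverse limit measures (\Cref{PROP:unique-inverse-limit-ergodic}). The core observation is that, by definition, a uniformity limit measure is precisely a $(\vartheta,\mc P)$ inverse limit measure for the specific sequence $\mc P = (\mathbf{P}^{n,1})_{n \in \N_0}$, whose associated $\mc P$-Markov sequence is exactly $Q(\mc P) = (Q(\mathbf{P}^{n,1}))_{n \in \N_0}$. Thus the whole theorem reduces to transporting the ergodicity dichotomy through the chain of equivalences provided by \Cref{THM:max-geometric-entropy-is-uniform}.

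Concretely, I would first recall that \Cref{THM:max-geometric-entropy-is-uniform} gives that the uniformity measures, the uniformity limit measures, and the measures of maximal geometric entropy on $(X_\vartheta,S)$ all coincide. In particular, the set of uniformity measures is non-empty, since \Cref{LEM:existence-of-inverse-limit-measures} guarantees the existence of at least one $(\vartheta,\mc P)$ inverse limit measure; thus the content of the biconditional is genuinely about uniqueness. Applying \Cref{PROP:unique-inverse-limit-ergodic} to the sequence $\mc P = (\mathbf{P}^{n,1})_{n \in \N_0}$ then yields that there is a unique uniformity limit measure if and only if $(Q(\mathbf{P}^{n,1}))_{n \in \N_0}$ is ergodic. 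Combined with the identification above, this is exactly the assertion that there is a unique uniformity measure $\mu_{\operatorname{u}}$ if and only if this Markov sequence is ergodic. The only point requiring care is the bookkeeping of the index sets ($\N$ versus $\N_0$), but this is harmless after a relabelling.

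For the final claim, I would invoke the bijection $\mu \mapsto \widetilde{\mu}$ between $S$-invariant measures on $X_\vartheta$ and $T$-invariant measures on $Y_\vartheta$, together with Abramov's formula $h_{\widetilde{\mu}}(Y_\vartheta) = \hg_\mu$. Since this bijection transports the normalised entropy, the measures of maximal entropy on $(Y_\vartheta,T)$ are precisely the lifts of the measures of maximal geometric entropy on $(X_\vartheta,S)$, equivalently of the uniformity measures, by \Cref{THM:main_uniformity_maximal}. Hence, when $\mu_{\operatorname{u}}$ is the unique uniformity measure, its lift $\widetilde{\mu_{\operatorname{u}}}$ is the unique measure of maximal entropy on $Y_\vartheta$, so $(Y_\vartheta,T)$ is intrinsically ergodic. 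I expect no serious obstacle here: all the analytic work is carried by \Cref{THM:max-geometric-entropy-is-uniform} and \Cref{PROP:unique-inverse-limit-ergodic}, and the remaining difficulty is purely a matter of matching the specific probability sequence $(\mathbf{P}^{n,1})$ appearing in the definition of uniformity limit measures with the Markov sequence in the statement, and of confirming that the suspension lift transports uniqueness of the maximiser from $X_\vartheta$ to $Y_\vartheta$.
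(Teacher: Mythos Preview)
Your proposal is correct and follows essentially the same approach as the paper's proof: invoke \Cref{PROP:unique-inverse-limit-ergodic} for the sequence $\mc P = (\mathbf{P}^{n,1})_{n \in \N_0}$ to get the uniqueness biconditional, then use \Cref{THM:max-geometric-entropy-is-uniform} and the suspension bijection to conclude intrinsic ergodicity of $(Y_\vartheta,T)$. Your version is simply more explicit about the intermediate identifications (uniformity measures versus uniformity limit measures, and the role of Abramov's formula), but the logical skeleton is identical.
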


\begin{proof}
    The first statement about the uniqueness of $\mu_{\operatorname{u}}$ is a direct consequence of \Cref{PROP:unique-inverse-limit-ergodic}. By \Cref{THM:max-geometric-entropy-is-uniform} the uniformity measures are precisely the measures of maximal geometric entropy, and hence their lifts under the suspension are precisely the measures of maximal entropy on $(Y_\vartheta,T)$.
\end{proof}

\begin{corollary}
\label{COR:intrinsically_ergodic_conditions}
    The system $(Y_\vartheta,T)$ is intrinsically ergodic if any of the following hold.
    \begin{itemize}
    \item $\vartheta$ is compatible. 
    \item There is a primitive matrix $M$ such that $\lim_{n \to \infty} M(\mathbf{P}^{n,1}) = M$.
    \item There is a primitive matrix $Q$ such that $\lim_{n \to \infty} Q(\mathbf{P}^{n,1}) = Q$.
    \item There is some $n \in \N$ such that all marginals of $\vartheta^n$ have a strictly positive substitution matrix.
    \item $\vartheta$ is defined on a binary alphabet $\mc A$.
    \end{itemize}
\end{corollary}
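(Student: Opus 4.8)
The plan is to reduce the statement entirely to the characterisation of intrinsic ergodicity already obtained in \Cref{THM:main-intrinsic-ergodicity}, which tells us that $(Y_\vartheta,T)$ is intrinsically ergodic precisely when the Markov sequence $Q(\mc P) = (Q(\mathbf{P}^{n,1}))_{n \in \N_0}$ is ergodic in inverse time. Thus each condition in the corollary only needs to be shown to force ergodicity of this specific inverse-time Markov chain. The key observation is that the five listed conditions are, verbatim, the five conditions of \Cref{COR:unique-inverse-limit-conditions}, now specialised to the sequence $\mc P = (\mathbf{P}^{n,1})_{n \in \N_0}$ of $n$-productivity distributions. Since the proof of \Cref{COR:unique-inverse-limit-conditions} actually verifies ergodicity of $Q(\mc P)$ (via \Cref{THM:ergodicity-characterisation_and_conditions}) before invoking \Cref{PROP:unique-inverse-limit-ergodic}, the argument transfers directly to the present sequence.

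Concretely, I would run through the items in order. If $\vartheta$ is compatible, then $M(\mathbf{P})$ is independent of $\mathbf{P}$; in particular $M(\mathbf{P}^{n,1})$ equals a fixed primitive matrix for all $n$, so the compatible case is subsumed by the convergence hypothesis. The two convergence conditions, namely $M(\mathbf{P}^{n,1}) \to M$ or $Q(\mathbf{P}^{n,1}) \to Q$ with the limit primitive, are equivalent and match condition (2) of \Cref{THM:ergodicity-characterisation_and_conditions}: because geometric compatibility makes $\lambda$ and $\Length$ independent of $\mathbf{P}$, we have $Q(\mathbf{P}) = \lambda^{-1} D M(\mathbf{P}) D^{-1}$ with $D = \operatorname{diag}(\Length)$, so the nonzero pattern and hence primitivity are preserved under passing between $M$ and $Q$. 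For the fourth condition, strict positivity of all marginal substitution matrices of $\vartheta^n$ bounds Dobrushin's coefficient $\delta(Q(\mathbf{P}))$ away from $1$ uniformly over probability choices $\mathbf{P}$ of $\vartheta^n$; submultiplicativity of $\delta$ then forces $\delta(Q_{[m,m+k]}) \to 0$, yielding ergodicity through \Cref{THM:ergodicity-characterisation_and_conditions}. Finally, the binary case is handled by showing it is a special instance of the fourth: recognisability precludes the presence of $a^\Z$ or $b^\Z$ in the subshift and also any inflation word $b^n \in \vartheta(b)$, which together force all marginals of $\vartheta^2$ to have strictly positive substitution matrix.

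I do not expect a genuine obstacle, since the substantive work was already carried out in \Cref{THM:main-intrinsic-ergodicity}, equating intrinsic ergodicity of $(Y_\vartheta,T)$ with inverse-time ergodicity of $(Q(\mathbf{P}^{n,1}))_{n \in \N_0}$, and in \Cref{COR:unique-inverse-limit-conditions}, deriving this ergodicity from each hypothesis. The only point requiring care is the bookkeeping of matching the conditions: one must confirm that the hypotheses stated here in terms of $\mathbf{P}^{n,1}$ coincide with the abstract conditions on a general sequence $\mc P$, and that the conditions that are properties of $\vartheta$ alone apply unchanged to this particular sequence of productivity distributions. Once this identification is in place, the corollary follows by a single appeal to \Cref{THM:main-intrinsic-ergodicity}.
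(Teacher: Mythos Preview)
Your proposal is correct and follows essentially the same approach as the paper, which proves the corollary in one line by combining \Cref{THM:main-intrinsic-ergodicity} with \Cref{COR:unique-inverse-limit-conditions}. You unpack the argument in more detail than the paper does, but the structure and the ingredients are identical.
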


\begin{proof}
    This follows directly by combining \Cref{THM:main-intrinsic-ergodicity} with \Cref{COR:unique-inverse-limit-conditions}.
\end{proof}

\begin{proof}[Proof of \Cref{COR:main_rpc}]
    If $\vartheta$ is primitive, compatible and recognisable, intrinsic ergodicity of $(Y_{\vartheta}, T)$ and $(X_{\vartheta}, S)$ are equivalent and follow by \Cref{COR:intrinsically_ergodic_conditions}. In this case, all $n$-productivity distributions are given by the uniform distribution $\mathbf{P} = \mathbf{P}^{(0,1)}$, and the uniformity sequence $\mc P$ is trivial. It hence follows from \Cref{REM:frequency_measures-as-inverse-limits} that $\mu_\mathbf{P}$ is the unique uniformity (limit) measure and thus the measure of maximal entropy.
\end{proof}

\begin{example}\label{Exa:IE-MME-not-freq}
Consider the random substitution $\vartheta$ on $\mc A = \{a,b,c\}$, given by 
\[
\vartheta \colon a \mapsto \{abc, acc\}, \quad b \mapsto \{bac,bcc\}, \quad c \mapsto \{ aac\}.
\]
This example is easily verified to be primitive, of constant length and recognisable. We will show that it gives rise to an intrinsically ergodic subshift, although the productivity weights are non-trivial. First, note that $p_n := \# \vartheta^n(a) = \# \vartheta^n(b)$ follows by induction. Similarly, let $q_n = \# \vartheta^n(c)$ and $r_n = p_n/q_n$ for all $n \in \N_0$. We obtain 
\[
q_{n+1} = \# \vartheta^{n+1}(c) = \# \vartheta^n(aac) = p_n^2 q_n,
\]
and similarly $p_{n+1} = (p_n+q_n) p_n q_n$. This yields the recursive relation $r_{n+1} = 1 + 1/r_n$ with $r_0 = 1$, which is solved by $r_n = 1 + F_n/F_{n+1}$, with $F_n$ being the $n$th Fibonacci number. Hence, the limiting value $\tau = \lim_{n \to \infty} r_n$ is the inverse of the golden ratio. From this, we obtain that $ \mathbf{P} = \lim_{n \to \infty} \mathbf{P}^{n,1}$ exists and is non-degenerate. In particular $Q(\mathbf{P}^{n,1})$ converges to the primitive matrix $Q(\mathbf{P})$. As a consequence of \Cref{COR:intrinsically_ergodic_conditions} we see that both $(X_\vartheta,S)$ and $(Y_\vartheta,T)$ are intrinsically ergodic.
Note that if we replace $c \mapsto \{ aac\}$ by $c \mapsto \{ acc\}$, we obtain $r_{n+1} = r_n + 1$ such that $r_n \to \infty$ as $n \to \infty$ and hence $\mathbf{P}$ turns out to be degenerate for $\vartheta$. In fact, it singles out the marginal $a \mapsto abc$, $b \mapsto bac$, $c \mapsto aac$. Since this is still primitive, we again deduce intrinsic ergodicity by the same criterion.
\end{example}

Easy sufficient conditions for the violation of intrinsic ergodicity seem to be harder to find. However, we provide an example below, showing that there are indeed primitive, geometrically and recognisable random substitutions with multiple measures of maximal (geometric) entropy.

\begin{example}
    Consider the primitive random substitution $\vartheta$ of constant length $4$ on the alphabet $\mc A = \{a_0,a_1,b_0,b_1,c \}$ given by 
    \[
    \vartheta \colon \begin{cases}
    a_i  & \mapsto\begin{cases}a_i a_i a_{i+1} a_{i+1}, 
    \\a_i a_{i+1} c c, \end{cases}
    \\b_i & \mapsto \begin{cases} b_i b_i b_{i+1} b_{i+1},
    \\ b_i b_{i+1} c c, \end{cases}
    \\c  & \mapsto a_0 b_0 c c,
    \end{cases}
    \]
where indices are to be understood modulo $2$.
It is straightforward to see that $\vartheta$ is primitive. To verify recognisability, note that inflation words that contain $cc$ are easy to identify. A pattern of the form $w = a_i a_i a_{i+1} a_{i+1}$ is either a complete inflation word or splits into two inflation words in the middle. The only case in which the next four letters do not force one of the two options is if they form exactly the same word $w$. Repeating the argument, we see that the only obstruction to recognisability would be the existence of words of the form $w^n$ for arbitrarily large $n$. However, $w^6$ cannot be legal, as this would require a word $a_i^6$ or $a_{i+1}^5$ in the preimage, both of which are not legal. By symmetry, the same argument applies to patterns of the form $b_i b_i b_{i+1} b_{i+1}$, and we obtain that $\vartheta$ is indeed recognisable.

The idea behind this example is the following. We can partition the alphabet into three pieces, according to $\mc A = \{a_0,a_1 \} \cup \{b_0,b_1 \} \cup \{c\}$. The letter $c$ ensures primitivity but contributes least to entropy production. For the $n$-productivity weights, this causes the images of letters of type $a$ to favour those inflation words that consist only of type $a$ letters. The same holds for letters of type $b$. In the limit, this creates a non-primitive substitution matrix. We verify that the communication to letters of a different type dies out sufficiently fast so that most of the mass starting on $a$ (or $b$) remains trapped. This precludes convergence to a common limit distribution. The details follow.

We can show by induction on $n \in \N_0$ that $p_n=\# \vartheta^n(a_i)=\# \vartheta^n(b_i)$ does not depend on $i$. We also use the notation $q_n := \# \vartheta^n(c)$ and $r_n = p_n/q_n$ for all $n \in \N_0$. Since the disjoint set condition holds, we obtain $p_{n+1} = p_n^4 + p_n^2 q_n^2$ and $q_{n+1} = p_n^2 q_n^2$. This yields
\[
r_{n+1} = r_n^2 + 1,
\]
for all $n \in \N_0$, with $r_0=1$. This is a rapidly increasing function in $n$. Let $u^a_i = a_i a_i a_{i+1} a_{i+1}$ and $v^a_i = a_{i} a_{i+1} c c$, and define $u^b_i,v^b_i$ analogously. The cardinalities satisfy
\[
\frac{\# \vartheta^n(u^a_i)}{\# \vartheta^n(v^a_i)} = \frac{p_n^2}{q_n^2} = r_n^2
\]
The $n$-productivity distribution for $\vartheta$ therefore satisfies
\[
\mathbf{P}^{n,1}_{v^a_i,a_i} =
\mathbf{P}^{n,1}_{v^b_i,b_i} = 
\frac{\# \vartheta^n(v^a_i)}{\# \vartheta^n(v^a_i) + \# \vartheta^n(u^a_i)} = \frac{1}{1+r_n^2} = \frac{1}{r_{n+1}}.
\]
Note that $\vartheta$ is a mixture of the marginals $\theta$ and $\theta'$, where 
\[
\theta \colon a_i \mapsto u^a_i,
\quad b_i \mapsto u^b_i,
\quad c \mapsto a_0 b_0 c c,
\qquad
\theta' \colon a_i \mapsto v^a_i,
\quad b_i \mapsto v^b_i,
\quad c \mapsto a_0 b_0 c c,
\]
which have substitution matrices
\[
M = \begin{pmatrix}
      2 & 2 & 0 & 0 & 1
    \\2 & 2 & 0 & 0 & 0
    \\0 & 0 & 2 & 2 & 1
    \\0 & 0 & 2 & 2 & 0
    \\0 & 0 & 0 & 0 & 2
\end{pmatrix},
\qquad
M' = \begin{pmatrix}
      1 & 1 & 0 & 0 & 1
    \\1 & 1 & 0 & 0 & 0
    \\0 & 0 & 1 & 1 & 1
    \\0 & 0 & 1 & 1 & 0
    \\2 & 2 & 2 & 2 & 2
\end{pmatrix},
\]
respectively. 
Note that in the limit $n \to \infty$ the $n$-productivity weights single out the marginal $\theta$, and hence the limiting productivity matrix $Q = \lim_{n \to \infty} Q(\mathbf{P}^{n,1})$ is given by the normalised substitution matrix $Q = M/4$. The substitution matrix for $\mathbf{P}^{n,1}$ is given by 
\[
M(\mathbf{P}^{n,1}) = \frac{r_{n+1} - 1}{r_{n+1}} M + \frac{1}{r_{n+1}} M',
\]
and the corresponding geometric variant is $ Q_n := Q(\mathbf{P}^{n,1}) = M(\mathbf{P}^{n,1})/4$.

Our aim in the following is to rule out intrinsic ergodicity by showing explicitly that $Q_{[1,n]} = Q_1 \cdots Q_n$ does not converge to a one-dimensional projection. To this end, we extract from any matrix $P$ indexed by $\mc A$ the ``upper left corner'' via
\[
A(P) = \{P_{a_i,a_j} \}_{i,j \in \{0,1 \}}.
\]
Since both $M$ and $M'$ exhibit a multiple of the idempotent matrix
\[
N = \begin{pmatrix}
    1/2 & 1/2
    \\1/2 & 1/2
\end{pmatrix}
\]
at the corresponding position, we obtain that
\[
A(Q_n) = s_n N,
\quad 
s_n := \frac{r_{n+1}-1}{r_{n+1}} + \frac{1}{2 r_{n+1}}
= \frac{2r_{n+1} - 1}{2 r_{n+1}}.
\]
Since we are dealing with non-negative matrices, extracting a submatrix is super-multiplicative in the sense that $A(QQ') \geqslant A(Q) A(Q')$, hence
\begin{equation}
\label{EQ:upper-corner-estimate}
A(Q_1 \cdots Q_n)
\geqslant A(Q_1) \cdots A(Q_n) 
= \prod_{i=0}^{n-1} s_i N.
\end{equation}
For $n \in \N_0$, we compute the first few values of $r_{n+1}$ as $2,5,26,677,458330,\ldots$, giving rise to the values $s_n = 3/4, 9/10, 51/52, \ldots$.
We argue that $s^{\infty} = \prod_{i=0}^{\infty} s_i > 1/2$. 
For every $n,m\in \N$, we can iterate the relation $r_{n+1} > r_n^2$ to obtain $r_{n+m} > r_n^{2^m} > r_n^{m+1}$,
and using that
\[
\log (s_n) \geqslant 2(s_n - 1) = - \frac{1}{r_{n+1}}
\]
for $s_n < 1$ sufficiently close to $1$, we obtain that
\[
\log \prod_{i=n}^\infty s_i
= \sum_{i = 0}^\infty \log(s_{n+i})
\geqslant - \sum_{i=0}^\infty \frac{1}{r_{n+1+i}}
> - \sum_{i=0}^\infty \frac{1}{r_{n+1}^{i+1}}
= \frac{1}{1-r_{n+1}},
\]
and therefore
\[
\prod_{i=n}^{\infty} s_i \geqslant \exp(1/(1-r_{n+1})) =: t_n.
\]
By explicit calculation, we obtain $t_2>0.96$ and therefore
\[
s^{\infty} = s_0 s_1 \prod_{i=2}^{\infty} s_i
> \frac{3}{4} \, \frac{9}{10} \, \frac{96}{100} 
= \frac{648}{1000} > 0.5 .
\]
Combining this with \eqref{EQ:upper-corner-estimate}, we obtain that for all $n \in \N$, we have
\[
(Q_{[1,n]})_{a_0,a_0} + (Q_{[1,n]})_{a_1,a_0} 
\geqslant s^\infty > 0.5.
\]
By symmetry, the same relation holds with $a$ replaced by $b$. But this means that the first and third column of $Q_{[1,n]}$ stay bounded away from each other by a positive distance. Hence, $Q_{[1,n]}$ does not converge to a one-dimensional projection. We conclude that $(Q_n)_{n \in \N}$ is not ergodic and hence that both $(X_{\vartheta},S)$ and $(Y_{\vartheta},T)$ admit several measures of maximal entropy.
\end{example}

\section*{Acknowledgements}

AM thanks Lund University for their hospitality during a research visit in April 2024, where this project began.
PG acknowledges support from the German Research Foundation (DFG) through Project 50942770 and AM acknowledges support from EPSRC grant EP/Y023358/1 and an EPSRC Doctoral Prize Fellowship.

\subsection*{Data availability statement}

This project has no associated data.

\bibliographystyle{abbrv}
\bibliography{ref}

\end{document}